\newcommand{\lyxmathsym}[1]{\ifmmode\begingroup\def\b@ld{bold}
  \text{\ifx\math@version\b@ld\bfseries\fi#1}\endgroup\else#1\fi}
\theoremstyle{plain}
\newtheorem{thm}{\protect\theoremname}[section]
  \theoremstyle{plain}
  \newtheorem{prop}[thm]{\protect\propositionname}
  \theoremstyle{definition}
  \newtheorem{defn}[thm]{\protect\definitionname}
  \theoremstyle{remark}
  \newtheorem{rem}[thm]{\protect\remarkname}
  \theoremstyle{definition}
  \newtheorem{example1}[thm]{\protect\examplename}
  \theoremstyle{plain}
  \newtheorem{cor}[thm]{\protect\corollaryname}
  \theoremstyle{definition}
  \newtheorem{problem1}[thm]{\protect\problemname}
  \providecommand{\corollaryname}{Corollary}
  \providecommand{\definitionname}{Definition}
  \providecommand{\examplename}{Example}
  \providecommand{\problemname}{Problem}
  \providecommand{\propositionname}{Proposition}
  \providecommand{\remarkname}{Remark}
\providecommand{\theoremname}{Theorem}
\begin{document}

\title{Reflexive cones\footnote{The last two authors of this research have been co-financed by the European Union (European Social Fund -
ESF)and Greek national funds through the Operational Program
"Education and Lifelong Learning" of the National Strategic
Reference Framework (NSRF) - Research Funding Program: Heracleitus
II. Investing in Knowledge society through the European Social
Fund}}

\author{E. Casini%
\thanks{Dipartimento di Scienza e Alta Tecnologia, Università dell'Insubria,
via Valleggio, 11, 22100 Como, Italy (emanuele.casini@uninsubria.it).%
}\quad{} E. Miglierina%
\thanks{Dipartimento di Discipline Matematiche, Finanza Matematica ed Econometria,
Università Cattolica del Sacro Cuore, via Necchi 9, 20123 Milano,
Italy (enrico.miglierina@unicatt.it)%
}\quad{} I.A. Polyrakis%
\thanks{Department of Mathematics, National Technical University of Athens,
Zografou 157 80, Athens, Greece (ypoly@math.ntua.gr)%
}\quad{}F. Xanthos%
\thanks{Department of Mathematics, National Technical University of Athens,
Zografou 157 80, Athens, Greece (fxanthos@math.ntua.gr)%
}}
\maketitle

\begin{abstract}
Reflexive cones in Banach spaces are cones with weakly compact
intersection with the unit ball. In this paper we study the
structure of this class of cones. We investigate the relations
between the notion of reflexive cones and the properties of their
bases. This allows us to prove a characterization of reflexive cones
in term of the absence of a subcone isomorphic to the positive cone
of $\ell_{1}$. Moreover, the properties of some specific classes of
reflexive cones are investigated. Namely, we consider the reflexive
cones such that the intersection with the unit ball is norm compact,
those generated by a Schauder basis and the reflexive cones regarded
as ordering cones in  Banach spaces. Finally, it is worth to point
out that a characterization of reflexive spaces and also of the
Schur spaces by the properties of reflexive cones is given.

\bigskip

\textbf{Keywords} Cones, base for a cone, vector lattices, ordered Banach
spaces, geometry of cones, weakly compact sets, reflexivity,
positive Schauder bases.

\bigskip

\textbf{Mathematics Subject Classification (2010)} 46B10, 46B20, 46B40, 46B42
\end{abstract}

\section{Introduction}

The study of cones is central in many fields of pure and applied mathematics.
In Functional Analysis, the theory of partially ordered spaces and
Riesz spaces is based on the  properties of cones, how these properties
are related with the algebraic and topological properties of the spaces,
the structure of linear operators, etc. In Mathematical economics
(see \cite{Aliprantis-Brown-Burkinshaw,Alipratis et al}) the theory
of partially ordered spaces is used in General Equilibrium Theory
and in finance the lattice structure is necessary for the representation
of the different kinds of derivatives. Also the geometry of ordering
cones is crucial in the theory of vector optimization, see in \cite{Gopfert et al}
and the reference therein.

Our motivation for this article was to study the class of cones $P$
of a Banach spaces $X$ which coincide with their second dual cone
in $X^{**}$, i.e. $P=P^{00}$ and we had called these cones reflexive.

In the first steeps of our work, Section \ref{sec:3}, we proved that
$P$ is reflexive if and only if the intersection $B_{X}^{+}=P\cap B_{X}$
of $P$ with the unit ball $B_{X}$ of $X$ is weakly compact. Since
this property seems more accessible and more natural, we decided to
start by this property as definition of reflexive cones. Based on
the above characterization of reflexive cones we give, Theorem ~\ref{thm:reflexivity},
a characterization of reflexive spaces.

We remark that in Banach spaces, cones with weakly compact $B_{X}^{+}$
(reflexive cones in the present terminology) have been studied in
\cite{Polyrakis2008} and the results of this article are applied
in economic models. Also in \cite{Casini-Miglierina2010} structural
properties of cones related with this kind of cones are given.

In Section \ref{sec:4} we continue the study of \cite{Polyrakis2008}
and \cite{Casini-Miglierina2010} by studying the bases of reflexive
cones. The relationships with the existence of bounded and unbounded
base of a reflexive cone allow us to prove that a closed cone $P$
of a Banach space is reflexive if and only if $P$ does not contain
a closed cone isomorphic to the positive cone $\ell_{1}^{+}$ of $\ell_{1}$,
Theorem ~\ref{thm:reflexive not contain l1+}. Note that necessary
and sufficient conditions in order a closed cone of a Banach space
to be isomorphic to the positive cone of $\ell_{1}$ are given in
~\cite{Polyrakis1988}.

Moreover, it is worth pointing out that the existence of a basic sequence
inside a reflexive cone, Theorem ~\ref{thm:structure-bounded base}
and ~\ref{thm:reflexive not contain l1+}, depends on the existence
of a bounded or an unbounded base of the cone.

During our study we found interesting examples of cones with norm
compact positive part $B_{X}^{+}$ of the unit ball and we called
these cones strongly reflexive. Section \ref{sec:5} is devoted to
the study of this class of reflexive cones. (We start by mentioning
an old result of Klee, Theorem ~\ref{thm:Klee}.)

We prove a characterization of Schur spaces as the Banach spaces where
every reflexive cone with a bounded base is strongly reflexive, Theorem
~\ref{cor:Characterization Schur property}.

We give also different examples of reflexive cones. Especially if
$X$ is a Banach lattice with a positive Schauder basis then we prove
that a strongly reflexive cone $P\subseteq X_{+}$ exists so that
the subspace $Y=P-P$ generated by $P$ is dense in $X$ and we give
a method for the determination of this cone, Theorem \ref{thm:strongly-reflexive in Banach-lattice}.

Also in Example ~\ref{exa:L_1-strongly reflexive} a strongly reflexive
cone $P\subseteq L_{1}^{+}[0,1]$ and in Example~\ref{exa:L_1-reflexive}
a reflexive cone $P\subseteq L_{1}^{+}[0,1]$ are determined so that,
in both cases, the subspace $Y=P-P$ generated by $P$ is dense in
$L_{1}[0,1]$.

We close Section \ref{sec:5} by proving that any positive operator
from a Banach lattice into a Banach space ordered by a reflexive
(strongly reflexive ) pointed cone is weakly compact(compact), see
Theorem~\ref{l2}. Although the proof of this result is easy,
combined with our examples of reflexive cones can determine
different classes of compact operators.

In Section \ref{sec:6} we give a characterization of the positive
cone $P$ of a Schauder basis $\{x_{n}\}$ of a Banach space $X$, in
terms of the properties of the basis itself, in the same spirit of
the classical result of James, \cite{James}. We show,
Theorem~\ref{theorem0} that if the cone $P$ is reflexive, then
$\{x_{n}\}$ is boundedly complete on $P$ and in Theorem
~\ref{theorem1} we prove that if the basis $\{x_{n}\}$ is shrinking
and boundedly complete on $P$, then $P$ is reflexive. We show also,
Example~\ref{Jam}, that the assumption \textquotedbl{}the shrinking
basis $\{x_{n}\}$ is boundedly complete only on the cone
$P$\textquotedbl{}, is not enough to ensure the reflexivity of the
whole space $X$.

In Section \ref{sec:7} we suppose $X$ is a Banach space ordered
by a reflexive cone $P$ and we study order properties of $X$. First
we show that if $P$ is normal then $X$ is order complete, Theorem
\ref{Ded}. In the sequel we study the lattice property. By companying
the basic result of~\cite{Polyrakis1988}, where characterizations
of the positive cone $\ell_{1}^{+}$ of $\ell_{1}$ are given and
our result that $P$ as a reflexive cone does not contains $\ell_{1}^{+}$,
we prove that if $P$ has a bounded base, then $P$ cannot be a lattice
cone (for the exact assumptions see in Theorem \ref{latcone} and
its corollaries).

Finally, we recall that in the Choquet theory, cones $P$ in locally
convex Hausdorff spaces $X$ with a compact base are studied. As we
note in Remark~\ref{Choquet}, our results are to an other direction
and independent of the ones of this theory.

\section{Notations and preliminaries}

In this article we will denote by $X$ a Banach space, by $X^{\lyxmathsym{\textasteriskcentered}}$
the norm dual of $X$ and by
\[
B_{X}=\left\{ x\in X:\left\Vert x\right\Vert \le1\right\} ,
\]
 the closed unit ball of $X$.

A nonempty, convex subset $P$ of $X$ is a \textbf{cone} if $\lambda P\subseteq P$
for every real number $\lambda\geq0$. If in addition, $P\cap(-P)=\left\{ 0\right\} $,
$P$ is a \textbf{pointed} or a \textbf{proper} cone. Note that in
the literature, instead of the terms cone and pointed cone the names
wedge and cone are often used.

For any $A\subseteq X$ we denote by $\overline{A}$, the closure
of $A$, by int$A$ the interior of $A$ by ${\rm co}(A)$ the convex
hull and by $\overline{{\rm co}}(A)$ the closed convex hull of $A$.
Also we denote by cone$(A)$ ($\overline{{\rm cone}}(A)$) the smallest
cone, (closed cone) containing $A$. By $\overline{A}^{w}$ we denote
the closure of $A$ in the weak topology and by $\overline{A}^{w^{*}}$
the closure of $A$ in the weak-star topology, whenever $A$ is a
subset of a dual space. If the set $A$ is convex, we have
\[
{\rm cone}(A)=\left\{ \lambda a:a\in A,\lambda\geq0\right\} .
\]
 If the set $A$ is closed and bounded, it is easy to show that $cone(A)$
is closed.

Suppose that $P$ is a cone of $X$. $P\subseteq X$ induces the
partial ordering $\leq_{P}$ in $X$ so that $x\leq_{P}y$ if and only
if $y-x\in P$, for any $x,y\in X$; in the sequel, for the sake of
simplicity, we will use the symbol $\leq$ instead of $\leq_{P}$
whenever no confusion can arise. This order relation is
antisymmetric if and only if $P$ is pointed. If $x,y\in X$ with
$x\leq y$, the set $[x,\; y]=\{z\in X\;|\; x\leq z\leq y\}$ is the
\textbf{order interval} defined by $x,y$. If $P-P=X$ the cone $P$ is
\textbf{generating}. The cone $P$ gives an \textbf{open
decomposition of $X$} if there exists $\rho>0$ so that $\rho
B_{X}\subseteq B_{X}^{+}-B_{X}^{+},$ where $B_{X}^{+}=B_{X}\cap P$.
In  Banach spaces any closed and generating cone gives an open
decomposition, ~\cite{Jameson}, Theorem 3.5.2. The cone $P$ is
\textbf{normal} if there exists $c\in\mathbb{R}$ so that for any
$x,\; y\in X,\;0\leq x\leq y$ implies $\|x\|\leq\; c\;\|y\|$.

A linear functional
$f$ of $X$ is said \textbf{positive} (on $P$) if $f(x)\geq0$ for
each $x\in P$ and \textbf{strictly positive } (on $P$) if $f(x)>0$
for each $x\in P,x\neq0$. The set of the continuous positive functionals
on the cone $P$ is a cone named the \textbf{dual} (or \textbf{polar})
\textbf{cone} of $P$ and it is denoted by
\[
P^{0}=\{x^{*}\in X^{*}:x^{*}(x)\geq0\,\,\text{ for each }x\in P\}.
\]
 If a strictly positive linear functional exists, the cone $P$ is
pointed. A convex subset $B$ of ~$P$ is a \textbf{base for the
cone ~$P$} if for each $x\in P,x\neq0$ a unique real number $f(x)>0$
exists such that $\frac{x}{f(x)}\in B$. Then the function $f$ is
additive and positively homogeneous on $P$ and $f$ can be extended
to a linear functional on $P-P$ by the formula $f(x_{1}-x_{2})=f(x_{1})-f(x_{2}),x_{1},x_{2}\in P$,
and in the sequel this linear functional will always be extended to
a linear functional on $X$. So we have: \textit{$B$ is a base for
the cone $P$ if and only if a strictly positive (not necessarily
continuous) linear functional $f$ of $X$ exists so that, $B=\{x\in P\mid f(x)=1\}.$}
Then we say that \textbf{the base $B$ is defined by the functional
$f$} and we denote it by $B_{f}$. If $B$ is a base for the cone
$P$ with $0\notin\overline{B}$, then a continuous linear functional
$x^{*}\in X^{*}$ separating $\overline{B}$ and $0$ exists. Then
$x^{*}$ is strictly positive and, if $B$ is bounded, the base for
$P$ defined by $x^{*}$ is also bounded. Therefore we can summarize
these facts as follows.

\textit{The cone $P$ has a base defined by a continuous linear functional
$x^{*}$ of $X$ if and only if $P$ has a base $B$ with $0\notin\overline{B}$.
If moreover the base $B$ is bounded the base for $P$ defined by
$x^{*}$ is bounded}.

Moreover it holds the following well-known result.
\begin{prop}
\label{pro:Jameson-bounded base-interior polar cone}(\cite{Jameson},
Theorem 3.8.4). A cone $P$ of a normed space $Y$ has a bounded base
$B$ with $0\notin\overline{B}$ if and only if the dual cone $P^{0}$
of $P$ in $Y^{*}$ has interior points. Moreover for every $x^{*}\in{\rm int}P^{0}$,
the base $B_{x^{*}}$ of $P$ defined by $x^{*}$ is bounded.
\end{prop}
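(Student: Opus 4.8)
The plan is to prove the two implications directly; the only non-routine ingredient is the Hahn--Banach norm-duality formula $\sup\{z(x):z\in Y^{*},\ \|z\|\le r\}=r\|x\|$, and the argument hinges on the elementary observation that, for a continuous strictly positive $x^{*}$, the base $B_{x^{*}}$ is bounded precisely when $x^{*}(x)\ge c\|x\|$ holds on $P$ for some $c>0$.

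\textbf{Interior point $\Rightarrow$ bounded base (and the ``moreover'' clause).} Given $x^{*}\in\mathrm{int}\,P^{0}$, I would fix $r>0$ with $x^{*}+z\in P^{0}$ whenever $\|z\|\le r$. For every $x\in P$ and every such $z$ this means $-z(x)\le x^{*}(x)$, and taking the supremum over $\|z\|\le r$ yields
\[
r\|x\|\le x^{*}(x)\qquad\text{for all }x\in P.
\]
Hence $x^{*}$ is strictly positive, so $B_{x^{*}}=\{x\in P:x^{*}(x)=1\}$ is a base of $P$, and every $x\in B_{x^{*}}$ satisfies $\|x\|\le 1/r$; thus $B_{x^{*}}$ is bounded. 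Since $x^{*}$ is continuous and identically $1$ on $\overline{B_{x^{*}}}$ while $x^{*}(0)=0$, we also get $0\notin\overline{B_{x^{*}}}$. This proves this implication and, simultaneously, the last sentence of the statement.

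\textbf{Bounded base $\Rightarrow$ interior point.} If $P$ has a bounded base $B$ with $0\notin\overline{B}$, I would invoke the remarks preceding the proposition: there is a continuous strictly positive $x^{*}\in Y^{*}$ such that the base $B_{x^{*}}$ it defines is again bounded, say $\|y\|\le M$ for all $y\in B_{x^{*}}$ (the degenerate case $P=\{0\}$, where $P^{0}=Y^{*}$, being trivial). Writing each $x\in P\setminus\{0\}$ as $x=x^{*}(x)\,y$ with $y=x/x^{*}(x)\in B_{x^{*}}$, one obtains $\|x\|\le M\,x^{*}(x)$, i.e. $x^{*}(x)\ge\|x\|/M$. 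Therefore, for $\|z\|\le r:=1/(2M)$ and every $x\in P$,
\[
(x^{*}+z)(x)\ge x^{*}(x)-\|z\|\,\|x\|\ge\Bigl(\tfrac1M-r\Bigr)\|x\|=\tfrac{1}{2M}\|x\|\ge0,
\]
so $x^{*}+z\in P^{0}$ for every $\|z\|\le r$; hence $x^{*}\in\mathrm{int}\,P^{0}$ and $P^{0}$ has interior points.

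\textbf{Where the difficulty lies.} There is no serious obstacle here; the one step needing care is the passage $\sup_{\|z\|\le r}(-z(x))=r\|x\|$ in the first part, which is exactly the Hahn--Banach norm-duality identity and is the reason the statement is formulated in a normed (rather than a general locally convex) space. Everything else is a direct unwinding of the definitions of a base and of the polar cone $P^{0}$.
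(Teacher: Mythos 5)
Your proof is correct. The paper offers no proof of this proposition --- it is quoted from Jameson (Theorem 3.8.4) --- so there is nothing to compare against; your argument is the standard one, reducing both implications to the inequality $x^{*}(x)\ge c\left\Vert x\right\Vert$ on $P$ (via the Hahn--Banach norming-functional identity in one direction and the separation remark preceding the proposition in the other), and it also delivers the ``moreover'' clause, since any $x^{*}\in{\rm int}P^{0}$ satisfies that inequality and hence defines a bounded base.
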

Now we recall a notion introduced in \cite{Casini-Miglierina2010}. A
cone $P$ is a \textbf{mixed based cone} if there exists a strictly
positive $x^{*}\in P^{0}$ which is not an interior point of $P^{0}$
and $intP^{0}\neq\emptyset$, or equivalently if $P$ has a bounded
and an unbounded base defined (the bases for $P$) by continuous
linear functionals.

Let $Y$ and $Z$ be two normed spaces. The cone $P\subseteq Y$ is
\textbf{isomorphic} to the cone $K\subseteq Z$ if there exists an
additive, positively homogeneous and one-to-one map $T$ of $P$ onto
$K$ such that $T$ and $T^{-1}$ are continuous in the induced topologies.
Then we also say that $T$ is an isomorphism of $P$ onto $K$ and
that $P$ is \textbf{embeddable} in $Z$.

We close this section by a known result, useful in this article. Recall
that if $\{x_{n}\}$ is a Schauder basis of $X$, then

\[
P=\{\sum_{i=1}^{\infty}\lambda_{i}x_{i}\;|\;\lambda_{i}\geq0,\;\;\text{for any}\; i\},
\]
 is \textbf{the positive cone of the basis $\{x_{n}\}$}. A sequence
$\{x_{n}\}$ of an ordered Banach space is a \textbf{positive basis}
of $X$ if it is a Schauder basis of $X$ and the positive cone $X_{+}$
of $X$ and the positive cone of the basis $\{x_{n}\}$ coincide.

The usual bases of the spaces $\ell_{p}$ , $1\leq p<\infty$ and
the space $c_{0}$ are simple examples of positive bases.

The next easy proposition has been used in~\cite{Polyrakis2004} and we present it here  because it  is
useful in our study. \begin{prop} \label{rem:positive cone of a basic sequence}
Let $X$ be an infinite dimensional Banach space with a normalized
Schauder basis $\{x_{n}\}$ and suppose that $\{x_{n}^{*}\}$ is the
sequence of the coefficient functionals of $\{x_{n}\}$. If $x^{*}=\sum_{i=1}^{\infty}\lambda_{i}x_{i}^{*}\in X^{*}$
with $\lambda_{i}>0$ for each $i$, then $x^{*}$ is strictly positive
on the positive cone $P$ of the basis $\{x_{n}\}$, and $x^{*}$
defines an unbounded base $B_{x^{*}}$ on $P$. \end{prop}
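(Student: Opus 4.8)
The plan is to treat the two assertions separately; both reduce to the biorthogonality relations $x_{i}^{*}(x_{j})=\delta_{ij}$ between the basis and its coefficient functionals.

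For the strict positivity, I would first note that evaluating the expansion $x^{*}=\sum_{i}\lambda_{i}x_{i}^{*}$ at a basis vector gives $x^{*}(x_{n})=\lambda_{n}$ for every $n$. Take now $x\in P$ with $x\neq0$ and write $x=\sum_{n}\mu_{n}x_{n}$ with $\mu_{n}\geq0$ for all $n$; since $x\neq0$, there is an index $n_{0}$ with $\mu_{n_{0}}>0$. As $x^{*}$ is continuous on $X$, we get $x^{*}(x)=\sum_{n}\mu_{n}\lambda_{n}$, a series of nonnegative terms, so $x^{*}(x)\geq\mu_{n_{0}}\lambda_{n_{0}}>0$. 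Hence $x^{*}$ is a continuous strictly positive functional on $P$, and by the characterization of bases recalled in Section~\ref{sec:3}'s preliminaries, $B_{x^{*}}=\{x\in P: x^{*}(x)=1\}$ is indeed a base for $P$.

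For the unboundedness of $B_{x^{*}}$, the crucial observation is that $\lambda_{n}\to0$: the series $\sum_{i}\lambda_{i}x_{i}^{*}$ converges in the norm of $X^{*}$, so its general term satisfies $\|\lambda_{n}x_{n}^{*}\|\to0$, and since $\{x_{n}\}$ is normalized we have $\|x_{n}^{*}\|\geq x_{n}^{*}(x_{n})/\|x_{n}\|=1$, whence $0\leq\lambda_{n}\leq\|\lambda_{n}x_{n}^{*}\|\to0$. Setting $y_{n}=\lambda_{n}^{-1}x_{n}$, we have $y_{n}\in P$ and $x^{*}(y_{n})=\lambda_{n}^{-1}\lambda_{n}=1$, so $y_{n}\in B_{x^{*}}$, while $\|y_{n}\|=\lambda_{n}^{-1}\to+\infty$; thus $B_{x^{*}}$ is unbounded.

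The only delicate point is precisely the convergence $\lambda_{n}\to0$, and therefore the reading of the hypothesis: the expression $x^{*}=\sum_{i}\lambda_{i}x_{i}^{*}$ must be understood as norm convergence of the series in $X^{*}$ (a weak-star reading would make the statement fail, for instance for $x^{*}=(1,1,1,\dots)\in\ell_{\infty}$ acting on the unit vector basis of $\ell_{1}$, where the associated base lies on the unit sphere). Once this is granted, everything else is a routine computation with the coefficient functionals, so no further input is needed.
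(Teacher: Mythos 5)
Your proof is correct and takes essentially the same route as the paper's (one-line) argument, which simply notes that $\lim_{i}\lambda_{i}=0$ and that $\{x_{n}/\lambda_{n}\}$ is an unbounded sequence in $B_{x^{*}}$; you merely fill in the details (strict positivity via biorthogonality, and $\lambda_{n}\to0$ from norm convergence of $\sum_{i}\lambda_{i}x_{i}^{*}$ together with $\|x_{n}^{*}\|\geq1$ for a normalized basis). Your observation that the expansion must be understood as norm convergence in $X^{*}$ matches the paper's intended reading, so there is no gap.
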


Indeed, it is easy to see that $\lim_{i\longrightarrow\infty}\lambda_{i}=0$
and that $\{\frac{x_{n}}{\lambda_{n}}\}$ is an unbounded sequence
of $B_{x^{*}}$.

\section{Reflexive cones and a characterization of reflexivity\label{sec:3}}

We start  with  the notion of reflexive cone.
\begin{defn}
 A cone $P$ of a  Banach space $X$ is \emph{reflexive} if
 the set $B_{X}^{+}=B_{X}\cap P$ is weakly compact. \end{defn}
\begin{rem}
\label{rem:(i)-(ii) basic properties of reflexive cones} The following
properties of a reflexive cone follow immediately from the definition:
\begin{enumerate}
\item A reflexive cone is always closed. Indeed, if $x_{n}\in P$ and $x_{n}\longrightarrow x$,
there exists $\rho\in\mathbb{R}_{+}$ so that $x_{n}\in\rho B_{X}^{+}$
for each $n$, therefore $x\in\rho B_{X}^{+}\subseteq P$.
\item Any closed cone of a reflexive space is reflexive. The converse does
not hold in general but, if a Banach space $X$ has a reflexive and
generating cone $P$, then $X$ is reflexive. Indeed, by
~\cite{Jameson}, Theorem 3.5.2, the cone $P$ gives an open
decomposition of $X$ and hence the unit ball $B_{X}$ is a weakly
compact set.
\end{enumerate}
\end{rem}
Now let us recall some standard notations. Let $X$ be a Banach
space, we denote by $J_{X}:X\longrightarrow X^{**}$ the natural
embedding of $X$ in $X^{**}$ . For the sake of simplicity, for any
$x\in X$ we denote by $\widehat{x}$ the image $J_{X}(x)$ of $x$ in
$X^{**}$ and for any subset $A\subseteq X$ we denote by
$\widehat{A}$ the set $J_{X}(A)\subseteq X^{**}$. Of course for any
$x^{*}\in X^{*}$, $\widehat{x^{*}}$ is the natural image
$J_{X^{*}}(x^{*})$ of $x^{*}$ in $X^{***}$ and for any $C\subseteq
X^{*}$, $\widehat{C}$ is the set $J_{X^{*}}(C)\subseteq X^{***}$.
Finally, given a subspace $V$ in $X$, we denote by $V^{\perp}$ the
annihilator $V^{\perp}=\{x^{*}\in X^{*}\;|\; x^{*}(x)=0,\;\text{for
any}\; x\in V\}$ of $V$ in $X^{*}$.

The next result shows that a reflexive cone exhibits the same behavior
as a reflexive space with respect to the second dual.
\begin{thm}
\label{thm:P=00003D00003D00003DP00}A closed cone $P$ of a Banach
space $X$ is reflexive if and only if
\[
\widehat{P}=P^{00}.
\]
\end{thm}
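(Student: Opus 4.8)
The plan is to reduce both implications to one bookkeeping identity about the natural embedding, namely
$\widehat{P}\cap nB_{X^{**}}=J_{X}(nB_{X}^{+})$ for every integer $n\ge 1$, together with the bipolar theorem and the Krein--\v{S}mulian theorem. First I would record the preliminary facts. (a) $J_{X}$ is an isometry and a homeomorphism from $\bigl(X,\sigma(X,X^{*})\bigr)$ onto $\widehat{X}$ equipped with the relative $\sigma(X^{**},X^{*})$-topology; hence a subset of $X$ is weakly compact if and only if its image under $J_{X}$ is weak-star compact in $X^{**}$. (b) Since $P$ is a convex cone and $\|\widehat{x}\|=\|x\|$, an element $\widehat{x}$ lies in $\widehat{P}\cap nB_{X^{**}}$ iff $x\in P$ and $\|x\|\le n$, iff $x/n\in B_{X}^{+}$; thus $\widehat{P}\cap nB_{X^{**}}=J_{X}(nB_{X}^{+})=n\bigl(\widehat{P}\cap B_{X^{**}}\bigr)$ for each $n$. (c) Applying the bipolar theorem in the dual pair $(X^{**},X^{*})$ to the convex cone $\widehat{P}$, and noting that the dual cone of $\widehat{P}$ in $X^{*}$ is precisely $P^{0}$, we get $P^{00}=\overline{\widehat{P}}^{w^{*}}$; in particular $\widehat{P}\subseteq P^{00}$, with equality if and only if $\widehat{P}$ is weak-star closed in $X^{**}$.

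For the direction ``$P$ reflexive $\Rightarrow\widehat{P}=P^{00}$'': assuming $B_{X}^{+}$ is weakly compact, fact (a) shows each $J_{X}(nB_{X}^{+})$ is weak-star compact, hence weak-star closed, so by (b) every set $\widehat{P}\cap nB_{X^{**}}$ is weak-star closed. Because $X^{**}=(X^{*})^{*}$, the Krein--\v{S}mulian theorem applies and yields that $\widehat{P}$, being a convex set whose intersection with each multiple of the unit ball is weak-star closed, is itself weak-star closed. By (c) this gives $\widehat{P}=\overline{\widehat{P}}^{w^{*}}=P^{00}$.

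For the converse: if $\widehat{P}=P^{00}$, then $\widehat{P}\cap B_{X^{**}}=P^{00}\cap B_{X^{**}}$ is a weak-star closed subset of the weak-star compact ball $B_{X^{**}}$, hence weak-star compact; by (b) this set equals $J_{X}(B_{X}^{+})$, so applying (a) in the opposite direction (weak-star compact image forces weakly compact preimage, via the continuous map $J_{X}^{-1}$ on $\widehat{X}$) we conclude that $B_{X}^{+}$ is weakly compact, i.e. $P$ is reflexive.

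The only step that is not purely formal is the invocation of Krein--\v{S}mulian in the first implication: the information gathered from (b) merely exhibits $\widehat{P}$ as a countable increasing union $\bigcup_{n}J_{X}(nB_{X}^{+})$ of weak-star compact sets, and such a union need not be weak-star closed on its own; it is exactly Krein--\v{S}mulian that licenses the passage from the ball-by-ball closedness to global weak-star closedness of the cone. Everything else is the identification $\widehat{P}\cap B_{X^{**}}=J_{X}(B_{X}^{+})$ (which uses that $P$ is a cone) and the standard behaviour of the canonical embedding and of bipolars.
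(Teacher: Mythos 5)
Your proposal is correct and follows essentially the same route as the paper: the forward direction uses the weak-star compactness of $J_{X}(nB_{X}^{+})=\widehat{P}\cap nB_{X^{**}}$ plus the Krein--\v{S}mulian theorem to get weak-star closedness of $\widehat{P}$, and the identification of $P^{00}$ with the weak-star closure of $\widehat{P}$; the converse uses Banach--Alaoglu and the fact that $J_{X}$ is a weak-to-relative-weak-star homeomorphism, exactly as in the paper.
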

\begin{proof}
Assume that the cone $P\subseteq X$ is reflexive. Then, the set $\widehat{B_{X}^{+}}$
is a weak$^{*}$ compact set and therefore, for any $\alpha>0$, the
set
\[
\widehat{P}\cap\alpha B_{X^{**}}=\alpha\widehat{B_{X}^{+}}
\]
 is a weak$^{*}$ closed set. By Krein-Smulian Theorem, \cite{Megginson},
Theorem 2.7.11, $\widehat{P}$ is a weak$^{*}$ closed cone. We thus
get $P^{00}=\widehat{P}$ because as one can easily show  $P^{00}$ is
the weak$^{*}$ closure of $\widehat{P}$.

Now we prove the other implication. Let us suppose that the equality$P^{00}=\widehat{P}$
holds. By Banach-Alaoglu theorem, the set
\[
\widehat{P\cap B_{X}}=\widehat{P}\cap B_{X^{**}}=P^{00}\cap B_{X^{**}}
\]
 is weak$^{*}$ compact. Since the map $J_{X}$ is a weak-to-relative
weak$^{*}$ homeomorphism from $X$ onto $\widehat{X}$,
\cite{Megginson}, Proposition 2.6.24, the set $B_{X}^{+}$ is a
weakly compact set of $X$ and, therefore the cone $P$ is
reflexive.
\end{proof}
The next step toward the characterization of reflexive spaces is the
following result where we prove that the third dual of a reflexive cone $P$
of a Banach space $X$ can be decomposed in the same way as the third
dual space of $X$ is decomposed by the well known formula, \cite{Fetter-Gamboa de Buen},
Lemma I.12:
\begin{equation}
X^{***}=\widehat{X^{*}}\oplus(\widehat{X})^{\perp}.\label{dec}
\end{equation}

\begin{thm} \label{lem:decomposition P***} If $P$ is a reflexive
cone of a Banach space $X$, then
\[
P^{000}=\widehat{P^{0}}+(\widehat{X})^{\perp}.
\]
 Moreover, every $p^{***}\in P^{000}$ has a unique decomposition
$p^{***}=x^{***}+y^{***},$ where $x^{***}\in\widehat{P^{0}}$ and
$y^{***}\in(\widehat{X})^{\bot}$. \end{thm}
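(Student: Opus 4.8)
The plan is to build the decomposition on top of the analogous vector space decomposition \eqref{dec} together with the identification $\widehat{P}=P^{00}$ from Theorem~\ref{thm:P=00003D00003D00003DP00}. The key observation is that $P^{000}=(P^{00})^{0}$ is the dual cone of $\widehat{P}$ computed inside $X^{***}$, so I want to show that a functional $p^{***}\in X^{***}$ is nonnegative on $\widehat{P}$ if and only if, writing $p^{***}=x^{***}+y^{***}$ with $x^{***}\in\widehat{X^{*}}$ and $y^{***}\in(\widehat{X})^{\perp}$ according to \eqref{dec}, the component $x^{***}$ lies in $\widehat{P^{0}}$. One inclusion is immediate: if $x^{***}=\widehat{x^{*}}$ with $x^{*}\in P^{0}$, then for $\widehat{p}\in\widehat{P}$ we have $(x^{***}+y^{***})(\widehat{p})=\widehat{p}(x^{*})+0=x^{*}(p)\geq 0$, so $\widehat{P^{0}}+(\widehat{X})^{\perp}\subseteq P^{000}$. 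This does not even need reflexivity of $P$.

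For the reverse inclusion, take $p^{***}\in P^{000}$ and decompose it via \eqref{dec} as $p^{***}=\widehat{x^{*}}+y^{***}$ with $x^{*}\in X^{*}$ and $y^{***}\in(\widehat{X})^{\perp}$. For every $p\in P$ we have $\widehat{p}\in\widehat{P}\subseteq P^{00}$, hence $0\leq p^{***}(\widehat{p})=\widehat{p}(x^{*})+y^{***}(\widehat{p})=x^{*}(p)+0=x^{*}(p)$. Thus $x^{*}\in P^{0}$ and $\widehat{x^{*}}\in\widehat{P^{0}}$, giving $p^{***}\in\widehat{P^{0}}+(\widehat{X})^{\perp}$. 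Combining the two inclusions yields $P^{000}=\widehat{P^{0}}+(\widehat{X})^{\perp}$. Here is where Theorem~\ref{thm:P=00003D00003D00003DP00} is essential: without $\widehat{P}=P^{00}$ one only knows $\widehat{P}$ is weak$^{*}$-dense in $P^{00}$, and testing $p^{***}$ on $\widehat{P}$ would not a priori give nonnegativity against all of $P^{00}$ — but since $P^{000}$ is by definition the set of functionals nonnegative on $P^{00}=\widehat{P}$, the computation above is exactly what is needed, and in fact one sees that reflexivity is used precisely to ensure $P^{000}=(\widehat{P})^{0}$ as a dual cone of the canonical copy of $P$.

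Finally, uniqueness of the decomposition $p^{***}=x^{***}+y^{***}$ with $x^{***}\in\widehat{P^{0}}$ and $y^{***}\in(\widehat{X})^{\perp}$ follows directly from the uniqueness in the ambient direct sum \eqref{dec}: since $\widehat{P^{0}}\subseteq\widehat{X^{*}}$, any two such decompositions are in particular decompositions along $\widehat{X^{*}}\oplus(\widehat{X})^{\perp}$, which are unique. I expect no serious obstacle here; the only point requiring care is to make sure that throughout one is consistently viewing $P^{000}$ as $(P^{00})^{0}$ and invoking Theorem~\ref{thm:P=00003D00003D00003DP00} to replace $P^{00}$ by $\widehat{P}$, so that the pairing computations with elements $\widehat{p}$, $p\in P$, actually exhaust the dual cone.
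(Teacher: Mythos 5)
Your computations coincide with the paper's own proof (decompose via \eqref{dec}, test against $\widehat{P}$, use $\widehat{P}=P^{00}$ from Theorem~\ref{thm:P=00003D00003D00003DP00}, and get uniqueness from the uniqueness in \eqref{dec}), and as written the argument is valid. However, your two asides about where reflexivity enters are exactly backwards. The inclusion $\widehat{P^{0}}+(\widehat{X})^{\perp}\subseteq P^{000}$ is the one that genuinely needs $P^{00}=\widehat{P}$: an element $p^{***}=\widehat{p^{*}}+y^{***}$ must be nonnegative on \emph{every} $p^{**}\in P^{00}$, and $y^{***}\in(\widehat{X})^{\perp}$ is not weak$^{*}$ continuous, so its vanishing on $\widehat{P}$ says nothing about its sign on the weak$^{*}$ closure $P^{00}$ unless that closure is $\widehat{P}$ itself. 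Concretely, for $X=c_{0}$, $P=c_{0}^{+}$ (not a reflexive cone) one has $P^{00}=\ell_{\infty}^{+}$, and if $L$ is a Banach limit then $-L\in(\widehat{X})^{\perp}\subseteq\widehat{P^{0}}+(\widehat{X})^{\perp}$ while $-L(\mathbf{1})=-1<0$ with $\mathbf{1}\in P^{00}$, so $-L\notin P^{000}$; thus your claim that this inclusion ``does not even need reflexivity'' is false. Conversely, the inclusion you single out in your third paragraph as the place where Theorem~\ref{thm:P=00003D00003D00003DP00} is essential, namely $P^{000}\subseteq\widehat{P^{0}}+(\widehat{X})^{\perp}$, in fact holds for any cone: one always has $\widehat{P}\subseteq P^{00}$, so testing $p^{***}\in P^{000}$ on the vectors $\widehat{p}$, $p\in P$, already gives $x^{*}(p)\geq0$ for all $p\in P$, with no density or surjectivity issue to worry about. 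In your writeup the error is harmless because you invoke $P^{00}=\widehat{P}$ globally from the start, but the commentary should be corrected: reflexivity is used to make the $y^{***}$ term disappear against all of $P^{00}$ in the ``easy'' inclusion, not to justify the reverse one.
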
 \begin{proof}

We will show first that $P^{000}\subseteq\widehat{P^{0}}+(\widehat{X})^{\perp}$.
By formula (\ref{dec}), every $p^{***}\in P^{000}$ has a unique
decomposition
\[
p^{***}=x^{***}+y^{***},
\]
 where $x^{***}\in\widehat{X^{*}}$ and $y^{***}\in(\widehat{X})^{\bot}$
and suppose that $x^{***}=\widehat{x^{*}}$, where $x^{*}\in X^{*}$.
For every $p^{**}\in P^{00}$ we have
$$
0\leq p^{***}(p^{**})=x^{***}(p^{**})+y^{***}(p^{**}).
$$
 Since $P$ is reflexive we have $P^{00}=\widehat{P}$, therefore
there exist $p\in P$ so that $\widehat{p}=p^{**}$. So we have
\begin{equation}
0\leq p^{***}(p^{**})=x^{***}(\widehat{p})+y^{***}(\widehat{p})=x^{***}(\widehat{p})=x^{*}(p)\label{eq:1-decompositionP000}
\end{equation}
 for every $p\in P.$ So we have $x^{*}\in P^{0}$ and $x^{***}\in\widehat{P^{0}}$,
therefore
\[
P^{000}\subseteq\widehat{P^{0}}+(\widehat{X})^{\perp}.
\]
 For the converse suppose that $p^{*}\in P^{0}$ and $y^{***}\in(\widehat{X})^{\bot}.$
Since $P$ is reflexive, every $p^{**}\in P^{00}$ is of the form
$p^{**}=\widehat{p},$ where $p\in P$, therefore we have
\[
(\widehat{p^{*}}+y^{***})(p^{**})=(\widehat{p^{*}}+y^{***})(\widehat{p})=p^{*}(p)\geq0.
\]
 The last relation implies that
\[
\widehat{P^{0}}+(\widehat{X})^{\perp}\subseteq P^{000}
\]
 which completes the proof.  \end{proof}

\begin{thm}\label{thm:reflexivity}A Banach space $X$ is reflexive
if and only if there exists a closed cone $P$ of $X$ so that the
cones $P$ and $P^{0}$ are reflexive. \end{thm}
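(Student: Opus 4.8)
The plan is to prove the two implications separately; the substance is entirely in the converse, while the forward implication is essentially free. If $X$ is reflexive then so is $X^{*}$, and by Remark~\ref{rem:(i)-(ii) basic properties of reflexive cones}(2) every closed cone in a reflexive space is reflexive; hence, taking $P=X$ (a closed cone of $X$, whose dual cone is $P^{0}=\{0\}$ — indeed any closed cone of $X$ would do), both $P\subseteq X$ and $P^{0}\subseteq X^{*}$ are reflexive. So nothing beyond this observation is needed in that direction.

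For the converse, suppose $P$ is a closed cone of $X$ such that $P$ and $P^{0}$ are both reflexive. I would show that $X$ is reflexive by proving that $(\widehat{X})^{\perp}=\{0\}$ in $X^{***}$; since $\widehat{X}$ is a norm-closed subspace of $X^{**}$, a Hahn--Banach separation argument shows this is equivalent to $\widehat{X}=X^{**}$, i.e. to the reflexivity of $X$. The idea is to evaluate $P^{000}$ in two different ways. On one hand, since $P$ is reflexive, Theorem~\ref{lem:decomposition P***} gives
\[
P^{000}=\widehat{P^{0}}+(\widehat{X})^{\perp},
\]
where $\widehat{P^{0}}=J_{X^{*}}(P^{0})$. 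On the other hand, $P^{0}$ is a closed cone of the Banach space $X^{*}$ (dual cones are always weak$^{*}$ closed) and it is reflexive by hypothesis, so Theorem~\ref{thm:P=00003D00003D00003DP00}, applied with $X^{*}$ in place of $X$ and $P^{0}$ in place of $P$, yields $\widehat{P^{0}}=(P^{0})^{00}=P^{000}$. Comparing the two identities gives $\widehat{P^{0}}=\widehat{P^{0}}+(\widehat{X})^{\perp}$; since $0\in P^{0}$, this forces $(\widehat{X})^{\perp}\subseteq\widehat{P^{0}}\subseteq\widehat{X^{*}}$. But the classical decomposition (\ref{dec}), $X^{***}=\widehat{X^{*}}\oplus(\widehat{X})^{\perp}$, says precisely that $\widehat{X^{*}}\cap(\widehat{X})^{\perp}=\{0\}$. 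Hence $(\widehat{X})^{\perp}=\{0\}$, and therefore $X$ is reflexive.

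The only delicate point — really the only place requiring attention — is the bookkeeping with the canonical embeddings and iterated polars: one must verify that the hypothesis \textquotedblleft$P^{0}$ is reflexive\textquotedblright{} is exactly what is needed to apply Theorem~\ref{thm:P=00003D00003D00003DP00} with $X^{*}$ in the role of $X$, that the object it produces, $(P^{0})^{00}\subseteq(X^{*})^{**}=X^{***}$, is the same $P^{000}$ occurring in Theorem~\ref{lem:decomposition P***}, and that the symbol $\widehat{\;\cdot\;}$ in the statement of Theorem~\ref{lem:decomposition P***} denotes $J_{X^{*}}$ when applied to $P^{0}$. Once these identifications are settled, the argument collapses to the one-line comparison of the two formulas for $P^{000}$, together with the elementary equivalence between $(\widehat{X})^{\perp}=\{0\}$ and reflexivity of $X$, which is where completeness of $X$ (so that $\widehat{X}$ is closed in $X^{**}$) and Hahn--Banach enter.
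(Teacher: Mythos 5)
Your proposal is correct and follows essentially the same route as the paper: it combines Theorem~\ref{lem:decomposition P***} with Theorem~\ref{thm:P=00003D00003D00003DP00} applied to $P^{0}$ in $X^{*}$ and then compares the two expressions for $P^{000}$ to conclude $(\widehat{X})^{\perp}=\{0\}$. The only cosmetic difference is that the paper deduces this from the uniqueness of the decomposition in Theorem~\ref{lem:decomposition P***}, while you use directly that $\widehat{X^{*}}\cap(\widehat{X})^{\perp}=\{0\}$ in the decomposition (\ref{dec}); these are the same observation.
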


\begin{proof} If $X$ is reflexive the thesis follows immediately.
Now let us suppose that there exists a closed cone $P\subseteq X$
such that $P$ and $P^{0}$ are reflexive. By Theorem~\ref{lem:decomposition P***},
we have
\begin{equation}
P^{000}=\widehat{P^{0}}+(\widehat{X})^{\perp}.\label{eq:1-reflexivity}
\end{equation}
 Moreover, by Theorem \ref{thm:P=00003D00003D00003DP00} it holds
\begin{equation}
P^{000}=\widehat{P^{0}}.\label{eq:2-reflexivity}
\end{equation}
 Since the decomposition of every element of $P^{000}$ is unique,
the comparison between (\ref{eq:1-reflexivity}) and
(\ref{eq:2-reflexivity}) implies that
$(\widehat{X})^{\bot}=\left\{ 0\right\} .$ From this we conclude
that $X^{**}=\widehat{X}$, proves the theorem.  \end{proof}

Theorem \ref{thm:reflexivity} implies that in every non reflexive
Banach  space a reflexive cone cannot have a dual cone which is
reflexive. The following example shows such a situation.
\begin{example1}
\label{exa:RademacherI} Let $X=L_{1}\left(\left[0,1\right]\right)$,
$Y$ is the closed subspace of $X$ generated by the Rademacher functions
$\{r_{n}\}$, and let $P$ be the positive cone of $\{r_{n}\}$. Recall
that $\left\{ r_{n}\right\} $ is a basic sequence in $L_{1}\left(\left[0,1\right]\right),$
equivalent to the standard basis of $\ell_{2}$, therefore $Y$
is isomorphic to $\ell_{2}$ and the cone $P$ is reflexive.  By Theorem
\ref{thm:reflexivity}, the dual cone  $P^{0}$ of $P$ in $L_{\infty}\left(\left[0,1\right]\right)$ is not reflexive.
\end{example1}
We underline that the previous example exhibits a reflexive cone $P$
in a non reflexive space $X$ where
$\overline{P-P}$ is a reflexive subspace of $X$. This does not hold in
general as the next example shows. We underline also that in the next example, the
subspace $P-P$ generated by $P$ is dense in $X$.
\begin{example1} \label{exa:L_1-reflexive}Suppose that
$X=L_{1}\left([0,1]\right)$ and
\[
D=\{d\in L_{1}\left([0,1]\right)\;:\;0\leq d\leq
\mathbf{1},\;||d||\geq\frac{1}{2}\}.
\]
 where $\leq$ is the usual order of $L_{1}\left(\left[0,1\right]\right)$
and $\mathbf{1}\in L_{1}\left(\left[0,1\right]\right)$ is the
function identically equal to $1$. Then $D$ is a closed, convex and
bounded set, therefore the cone $P$ of $L_{1}\left([0,1]\right)$
generated by $D$ is closed. For every $x\in P\cap B_{X}$ we have
$x=\lambda d,\; d\in D$ with $\lambda=\frac{||x||}{||d||}\leq2$,
therefore $P\cap B_{X}$ is a closed and convex subset of the order
interval $[0,2\mathbf{1}]$. Now we recall that each order interval
of $L_{1}[0,1]$ is weakly compact because $X$ has order continuous
norm, \cite{Aliprantis-Burkinshaw}, Theorem 12.9. Therefore the set
$B_{X}^{+}=P\cap B_{X}$ is weakly compact and hence the cone $P$ is
reflexive. Moreover, we remark that $\overline{P-P}=X$. Indeed, if
$\{I_{i}\}$ is the sequence of subintervals
$I_{1}=[0,\frac{1}{2}),\;
I_{2}=[\frac{1}{2},1],I_{3}=[0,\frac{1}{4}),I_{4}=[\frac{1}{4},\frac{1}{2}),I_{5}=[\frac{1}{2},\frac{3}{4}),I_{6}=[\frac{3}{4},1],...$
of $[0,1]$, $I'_{i}$ is the complement of $I_{i}$ and ${\cal
{X}}_{I'_{i}}$ is the characteristic function of $I'_{i}$, then
${\cal {X}}_{I'_{i}}\in D$ for any $i$. Moreover every element of
the Haar basis of $L_{1}\left([0,1]\right)$ can be written as the
difference of two functions of the form ${\cal {X}}_{I'_{i}}$,
therefore $P-P$ is dense in $X$. Finally, it is easy to see that the
cone $P$ is normal and the basis for $P$ defined by the constant
function $\mathbf{1}$ is bounded. \end{example1}

\section{Bases of reflexive cones \label{sec:4}}

This section is devoted to the study of the reflexive cones that
admit a base defined by a continuous linear functional. This class
of reflexive cones is a large subset of the whole class of reflexive
cones. Nevertheless, there exist some reflexive cones that have not
a base defined by a continuous linear functional, as shown by the
following example. \begin{example1} Let us consider an uncountable
set $\Gamma$, then space $\ell_{2}(\Gamma)$ endowed with the
pointwise order, is a reflexive Banach lattice without strictly
positive, continuous  linear functionals.  Therefore the lattice
cone $\ell_{2}^{+}(\Gamma)$ is a reflexive cone without a base
defined by a continuous linear functional. Moreover, the same
behavior appears in the spaces $\ell_{p}(\Gamma)$ with $1<p<\infty$.
\end{example1} We begin by recalling the following \textbf{Dichotomy
Theorem} about the bases of cones. \begin{thm} \label{thm:dichotomy}(
\cite{Polyrakis2008}, Theorem 4). Suppose that $\langle X,Y\rangle$
is a dual system. If $X$ is a normed space, $P$ is a $\sigma(X,Y)$-closed
cone of $X$ so that the positive part $B_{X}^{+}=B_{X}\cap P$ of
the unit ball $B_{X}$ of $X$ is $\sigma(X,Y)$-compact, we have:
either every base for $P$ defined by a vector $y\in Y$ is bounded
or every such base for $P$ is unbounded. \end{thm}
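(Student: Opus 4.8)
The plan is to argue by contradiction. Suppose $P$ admits both a bounded base $B_{y_1}$ defined by some strictly positive $y_1\in Y$ and an unbounded base $B_{y_2}$ defined by some strictly positive $y_2\in Y$; I will extract a contradiction from the $\sigma(X,Y)$-compactness of $B_X^{+}$. The first step is to record the quantitative meaning of the two hypotheses on the bases. Since $B_{y_1}=\{x\in P:\langle x,y_1\rangle=1\}$ is bounded, choose $M>0$ with $B_{y_1}\subseteq MB_X$; homogeneity then gives $\|x\|\le M\langle x,y_1\rangle$ for every $x\in P$. Since $B_{y_2}$ is unbounded, choose $x_n\in P$ with $\langle x_n,y_2\rangle=1$ and $\|x_n\|\to\infty$.

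Next I normalize: put $z_n=x_n/\|x_n\|$, so that $z_n\in P\cap B_X=B_X^{+}$ for every $n$. By $\sigma(X,Y)$-compactness of $B_X^{+}$, the sequence $(z_n)$ has a cluster point $z\in B_X^{+}$ in the $\sigma(X,Y)$-topology; in particular one can select indices $n_k\uparrow\infty$ with $\langle z_{n_k},y_1\rangle\to\langle z,y_1\rangle$ and $\langle z_{n_k},y_2\rangle\to\langle z,y_2\rangle$ (the set of $w\in B_X^{+}$ that are $1/k$-close to $z$ against both $y_1$ and $y_2$ is a $\sigma(X,Y)$-neighbourhood of $z$, hence meets $(z_n)$ at arbitrarily large indices). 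Now $\langle z_{n_k},y_2\rangle=\|x_{n_k}\|^{-1}\to 0$, so $\langle z,y_2\rangle=0$; since $z\in P$ and $y_2$ is strictly positive on $P$, this forces $z=0$. On the other hand, applying $\|x\|\le M\langle x,y_1\rangle$ to $x=x_n$ yields $\langle z_n,y_1\rangle=\langle x_n,y_1\rangle/\|x_n\|\ge 1/M$ for all $n$, hence $\langle z,y_1\rangle\ge 1/M>0$, contradicting $z=0$. Thus the ``mixed'' situation cannot occur, which is precisely the asserted dichotomy.

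The only genuinely delicate point is the limiting argument: because $\langle X,Y\rangle$ is an arbitrary dual system rather than a reflexive Banach space, one cannot appeal to sequential compactness, so the extraction must be phrased via a cluster point of $(z_n)$ in the compact set $B_X^{+}$ (equivalently, a convergent subnet), with care that the retained indices tend to infinity so that $\langle z_{n_k},y_2\rangle=\|x_{n_k}\|^{-1}$ still converges to $0$ along the extracted sequence. Everything else is elementary bookkeeping with the strict positivity of $y_1$ and $y_2$ and the definition of a base defined by a functional.
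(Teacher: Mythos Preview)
Your proof is correct. The compactness-versus-cluster-point subtlety you flag is handled properly: in a compact space every sequence has a cluster point (intersect the closures of the tails), and since $\sigma(X,Y)$-neighbourhoods of $z$ determined by finitely many vectors of $Y$ are available, you can legitimately extract increasing indices $n_k$ along which the two pairings $\langle\,\cdot\,,y_1\rangle$ and $\langle\,\cdot\,,y_2\rangle$ converge to their values at $z$. The rest---the inequality $\|x\|\le M\langle x,y_1\rangle$ from boundedness of $B_{y_1}$, the vanishing of $\langle z,y_2\rangle$ forcing $z=0$, and the lower bound $\langle z,y_1\rangle\ge 1/M$---is exactly right.

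As for comparison with the paper: this theorem is not proved in the present paper at all. It is quoted verbatim from \cite{Polyrakis2008}, Theorem~4, as a known ``Dichotomy Theorem'' and then immediately applied (via Theorem~\ref{thm:relexive implies not mixed}) to conclude that reflexive cones are not mixed based. So there is no in-paper argument to compare against; your contradiction argument is a clean, self-contained proof of the cited result and is presumably close in spirit to the original, since the ingredients (normalize an unbounded sequence from one base, pass to a cluster point in $B_X^+$, use strict positivity of the two defining functionals to reach a contradiction) are the natural ones.
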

In our setting, by
 Theorem \ref{thm:dichotomy} we have:

\begin{thm} \label{thm:relexive implies not mixed} Any
reflexive cone  of the Banach space  is not a mixed
based cone. \end{thm}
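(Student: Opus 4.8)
The plan is to obtain the statement as an immediate consequence of the Dichotomy Theorem (Theorem~\ref{thm:dichotomy}), applied to the dual system $\langle X,X^{*}\rangle$. So let $P$ be a reflexive cone of a Banach space $X$. First I would verify that the hypotheses of Theorem~\ref{thm:dichotomy} are met in this situation. By Remark~\ref{rem:(i)-(ii) basic properties of reflexive cones}, item (1), a reflexive cone is norm-closed; being convex as well, it is closed in the weak topology, that is, $P$ is $\sigma(X,X^{*})$-closed. Furthermore, ``weakly compact'' in the definition of a reflexive cone means precisely $\sigma(X,X^{*})$-compact, so $B_{X}^{+}=B_{X}\cap P$ is a $\sigma(X,X^{*})$-compact set. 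Hence Theorem~\ref{thm:dichotomy} applies with $Y=X^{*}$.

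From the Dichotomy Theorem we then get the alternative: either every base for $P$ defined by a functional $x^{*}\in X^{*}$ is bounded, or every such base for $P$ is unbounded. In particular, $P$ cannot simultaneously admit a bounded base and an unbounded base, both defined by continuous linear functionals on $X$, since a continuous linear functional on $X$ is exactly an element of $X^{*}$.

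On the other hand, by its very definition a mixed based cone is a cone that possesses both a bounded base and an unbounded base, each defined by a continuous linear functional. This is precisely the configuration ruled out by the dichotomy just established, so $P$ is not a mixed based cone.

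I do not expect a genuine obstacle in this argument; the only point deserving attention is the (routine) check that reflexivity of $P$ provides exactly the two ingredients required to invoke Theorem~\ref{thm:dichotomy} in the dual system $\langle X,X^{*}\rangle$, namely the $\sigma(X,X^{*})$-closedness of $P$ and the $\sigma(X,X^{*})$-compactness of $B_{X}^{+}$.
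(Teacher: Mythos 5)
Your proposal is correct and is essentially the paper's own argument: the paper derives this theorem directly from the Dichotomy Theorem (Theorem~\ref{thm:dichotomy}) applied to the dual system $\langle X,X^{*}\rangle$, exactly as you do. Your explicit verification that reflexivity supplies the $\sigma(X,X^{*})$-closedness of $P$ (norm-closed and convex) and the $\sigma(X,X^{*})$-compactness of $B_{X}^{+}$ is the routine check the paper leaves implicit.
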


The converse of the above theorem does not hold because $c_{0}^{+}$
is not a mixed based cone, Proposition \ref{rem:positive cone of a basic sequence},
but $c_{0}^{+}$ is not reflexive.

We now provide a sufficient condition to ensure that a given cone
is reflexive, based on an assumption about the boundedness of the
bases of the cone.
 \begin{prop} \label{pro:bounded bases imply reflexivity}Let
$X$ be a Banach space ordered by the closed cone  $P$. If the
set $P^{0s}$ of strictly positive and continuous linear functionals
of $X$ is nonempty and for any $x^*\in P^{0s}$ the base $B_{x^{*}}$ for $P$ defined by $x^{*}$ is bounded,
then the cone $P$ is reflexive.\end{prop}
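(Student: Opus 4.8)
The plan is to single out one bounded base $B:=B_{x_{0}^{*}}$ of $P$, with $x_{0}^{*}\in P^{0s}$ chosen arbitrarily (the set is nonempty by assumption), prove that $B$ is \emph{weakly} compact, and then recover the weak compactness of $B_{X}^{+}$ from that by rescaling. Assume $P\neq\{0\}$, the other case being trivial. By hypothesis $B=\{x\in P:x_{0}^{*}(x)=1\}$ is bounded; it is convex and, since $P$ is closed and $x_{0}^{*}$ continuous, norm-closed, hence weakly closed. If $x\in B_{X}^{+}=B_{X}\cap P$ with $x\neq 0$, then $0<x_{0}^{*}(x)\le\|x_{0}^{*}\|$ and $x/x_{0}^{*}(x)\in B$, so $x=\tfrac{x_{0}^{*}(x)}{\|x_{0}^{*}\|}\bigl(\|x_{0}^{*}\|\tfrac{x}{x_{0}^{*}(x)}\bigr)+\bigl(1-\tfrac{x_{0}^{*}(x)}{\|x_{0}^{*}\|}\bigr)0$ writes $x$ as a convex combination of a point of $\|x_{0}^{*}\|B$ and $0$; thus $B_{X}^{+}\subseteq\|x_{0}^{*}\|\,\overline{{\rm co}}(B\cup\{0\})$. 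Therefore, once $B$ is weakly compact, $B\cup\{0\}$ is weakly compact, so by Krein's theorem so is $\overline{{\rm co}}(B\cup\{0\})$, and then $B_{X}^{+}$, being a weakly closed subset of a weakly compact set, is weakly compact, i.e. $P$ is reflexive.

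It remains to prove that $B$ is weakly compact, and here I would invoke the classical theorem of James: a bounded, weakly closed, convex set is weakly compact as soon as every $f\in X^{*}$ attains its supremum on it. So fix $f\in X^{*}$, put $s=\sup_{x\in B}f(x)<\infty$, and suppose for contradiction that $s$ is not attained on $B$. The decisive step is to convert this failure into a forbidden base: set $g=s\,x_{0}^{*}-f\in X^{*}$. For $x\in B$ we have $g(x)=s-f(x)>0$, and for arbitrary $x\in P\setminus\{0\}$, writing $x=x_{0}^{*}(x)y$ with $y=x/x_{0}^{*}(x)\in B$ gives $g(x)=x_{0}^{*}(x)g(y)>0$; hence $g$ is strictly positive and continuous, i.e. $g\in P^{0s}$, so by hypothesis $B_{g}=\{x\in P:g(x)=1\}$ is bounded. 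On the other hand $\inf_{x\in B}g(x)=s-\sup_{x\in B}f(x)=0$ and is not attained, so there are $y_{n}\in B$ with $g(y_{n})>0$ and $g(y_{n})\to 0$; then $y_{n}/g(y_{n})\in B_{g}$, while $\|y_{n}/g(y_{n})\|=\|y_{n}\|/g(y_{n})\ge 1/(\|x_{0}^{*}\|\,g(y_{n}))\to\infty$, using $1=x_{0}^{*}(y_{n})\le\|x_{0}^{*}\|\,\|y_{n}\|$. This contradicts the boundedness of $B_{g}$. Hence every $f\in X^{*}$ attains its supremum on $B$, so $B$ is weakly compact, and by the reduction above $P$ is reflexive.

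The only genuinely non-formal ingredient is James' theorem, which supplies the passage from \textquotedblleft every functional attains its supremum\textquotedblright{} to actual weak compactness; everything else is rescaling and bookkeeping with bases (together with Krein's theorem on closed convex hulls of weakly compact sets). The step I expect to be the crux is the construction $g=s\,x_{0}^{*}-f$: a functional failing to attain its maximum on the base $B$ automatically manufactures a \emph{strictly positive} continuous functional whose base must be unbounded, and it is precisely here that the full strength of the hypothesis is consumed, namely that the base of \emph{every} strictly positive continuous functional is bounded, not merely of some particular one of them.
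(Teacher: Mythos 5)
Your argument is correct, and its skeleton coincides with the paper's: prove that one bounded base $B=B_{x_{0}^{*}}$ is weakly compact, then recover the weak compactness of $B_{X}^{+}$ by rescaling $B$ towards $0$. The difference is in how the weak compactness of the base is obtained. The paper first notes that the hypothesis forces $P^{0s}={\rm int}(P^{0})$ (via the bounded-base/interior-of-dual-cone duality of Proposition \ref{pro:Jameson-bounded base-interior polar cone}) and then quotes Lemma 3.4 of \cite{Casini-Miglierina2010}, which supplies the weak compactness of $B_{x^{*}}$; you instead prove this from scratch by James' theorem, the key point being the construction $g=s\,x_{0}^{*}-f$: a functional $f$ failing to attain its supremum $s$ on $B$ produces a strictly positive continuous $g$ whose base $B_{g}$ contains the unbounded sequence $y_{n}/g(y_{n})$, contradicting the hypothesis. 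In effect you give a self-contained proof of the cited lemma, which has the merit of making explicit where the full strength of the assumption (boundedness of the base for \emph{every} strictly positive continuous functional, not just one) is used, at the price of invoking James' theorem openly rather than leaving it inside the citation. A small simplification: Krein's theorem is not needed, since $B$ is convex, so once $B$ is weakly compact the set $\bigcup_{0\leq\alpha\leq1}\alpha B={\rm co}(B\cup\{0\})$ is itself weakly compact (continuous image of $[0,1]\times B$ with the product of the usual and the weak topology), which is exactly the set the paper uses to trap $\rho B_{X}\cap P$.
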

\begin{proof} Since every
base for $P$ defined by $x^{*}\in X^{*}$ is bounded we have that
$P^{0s}=int(P^{0})$. Hence, by Lemma 3.4 in \cite{Casini-Miglierina2010},
we have that the base $B_{x^{*}}$ for $P$ is weakly compact for
every $x^{*}\in X^{*}$. Now let us fix $x^{*}\in P^{0s}.$ Since
$B_{x^{*}}$ does not contain zero, there exists a positive real number
$\rho$ such that $\rho B_{X}\cap B_{x^{*}}=\emptyset$. It is easy
to check that the set
\[
\bigcup_{0\leq\alpha\leq1}\alpha B_{x^{*}},
\]
 is a weakly compact set which contains the closed set $\rho B_{X}\cap P$.
Therefore, the cone $P$ is reflexive.  \end{proof} We underline
that the converse of Proposition \ref{pro:bounded bases imply
reflexivity} does not hold. Indeed the reflexive cone
$\ell_{2}^{+}$ is such that every base $B_{x^{*}}$ for
$\ell_{2}^{+}$ is unbounded for every strictly positive linear
functional $x^{*}\in\ell_{2}$, Proposition~\ref{rem:positive cone
of a basic sequence}.

The results proved about the bases of reflexive cones allow us to
formulate a characterization of reflexive cones in the framework of
the theory of Banach spaces.

Before to state the theorem we recall a known result, that will play
a central role in the proof: \textit{Let $\left\{ x_{n}\right\} $
be a sequence in a Banach space $X$ which is not norm-convergent
to $0$. If }$\left\{ x_{n}\right\} $\textit{ is weakly Cauchy and
not weakly convergent, then $\left\{ x_{n}\right\} $ has a basic
subsequence}, \cite{Delabriere}, Theorem 1.1.10.

\begin{thm} \label{thm:reflexive not contain l1+}A closed cone $P$
of a Banach space $X$ is reflexive if and only if $P$ does not contain
a closed cone isomorphic to the positive cone of $\ell_{1}$. \end{thm}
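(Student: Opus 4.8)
The plan is to prove both implications by reducing to the known dichotomy (Theorem~\ref{thm:dichotomy}) and the structural facts already established about bases of reflexive cones. For the easy direction, assume $P$ contains a closed cone $K$ isomorphic to $\ell_1^+$. If $P$ were reflexive, then $B_X^+ = B_X \cap P$ would be weakly compact, hence every closed bounded subset of $P$ that lies inside some multiple of $B_X^+$ would be weakly compact; in particular, the image under the isomorphism of the standard basis $\{e_n\}$ of $\ell_1^+$ — which is a bounded sequence in $K\subseteq P$ — would have a weakly convergent subsequence. But an isomorphism of cones is additive, positively homogeneous, and bicontinuous, so it would carry a weak-Cauchy (resp.\ weakly convergent) sequence structure back to $\ell_1^+$; since $\{e_n\}$ in $\ell_1$ has no weak-Cauchy subsequence (by Rosenthal's theorem, or directly because $\ell_1$ has the Schur property and $\{e_n\}$ does not norm-converge), we reach a contradiction. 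Care is needed because bicontinuity of $T$ is only asserted on the cone, not on a linear subspace, so I would phrase the contradiction entirely in terms of sequences: a weakly convergent subsequence of $T(e_n)$ in $X$ would force, via $T^{-1}$ and the linear structure of differences $e_n - e_m$ inside $\ell_1^+ - \ell_1^+ = \ell_1$, a weak-Cauchy subsequence of $\{e_n\}$, which is impossible.

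For the main (contrapositive) direction, suppose the closed cone $P$ is \emph{not} reflexive, i.e.\ $B_X^+$ is not weakly compact; the goal is to build a closed subcone of $P$ isomorphic to $\ell_1^+$. By the Eberlein--\v{S}mulian theorem there is a sequence $\{x_n\}\subseteq B_X^+$ with no weakly convergent subsequence. First I would dispose of the case where some subsequence norm-converges: the limit lies in $P$ (as $P$ is closed) and is a weak limit, so after passing to a subsequence we may assume $\{x_n\}$ does not norm-converge to any point, and in fact — subtracting nothing since we are in a cone, we instead use homogeneity — we can arrange that $\{x_n\}$ is bounded away from $0$ or handle the convergent-to-$0$ case separately. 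Then either $\{x_n\}$ has a weak-Cauchy subsequence that is not weakly convergent, or $\{x_n\}$ has no weak-Cauchy subsequence at all. In the first case, apply the quoted result of \cite{Delabriere}, Theorem 1.1.10: a sequence that is weakly Cauchy, not weakly convergent, and not norm-null has a basic subsequence; combined with the analysis of bounded versus unbounded bases (Theorems~\ref{thm:structure-bounded base} and the machinery behind Theorem~\ref{thm:relexive implies not mixed}) one extracts from this basic sequence inside $P$ a normalized basic sequence whose closed cone hull is isomorphic to $\ell_1^+$, using the criterion of \cite{Polyrakis1988} for a closed cone to be isomorphic to $\ell_1^+$.

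The harder case — and the step I expect to be the main obstacle — is when $\{x_n\}\subseteq B_X^+$ has no weak-Cauchy subsequence; here Rosenthal's $\ell_1$ theorem gives a subsequence equivalent to the unit vector basis of $\ell_1$, i.e.\ there are constants $0<a\le b$ with $a\sum|\lambda_i| \le \|\sum \lambda_i x_i\| \le b\sum|\lambda_i|$ for all finitely supported scalars. Restricting to nonnegative coefficients, the map $T\colon \ell_1^+ \to X$, $T(\sum \lambda_i e_i) = \sum \lambda_i x_i$, is then additive, positively homogeneous, injective, and bicontinuous onto its range, and its range is a closed cone contained in $P$ (closedness of $\mathrm{cone}$ of the closed bounded set $\overline{\mathrm{co}}\{x_n\}$, as noted in the preliminaries, or directly from the lower $\ell_1$ estimate). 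So $P$ contains a closed cone isomorphic to $\ell_1^+$, completing the contrapositive. The delicate points to get right are: (i) passing to subsequences so that the ``not norm-null'' hypothesis of the Delabrière result genuinely holds, possibly after rescaling by $\|x_n\|^{-1}$ and separating the subcase $\|x_n\|\to 0$; (ii) verifying that the cone generated by the $\ell_1$-equivalent sequence is actually closed in $X$ and that $T^{-1}$ is norm-continuous on it, which is exactly where the uniform lower estimate $a\sum|\lambda_i|\le\|\sum\lambda_i x_i\|$ is used; and (iii) making sure the reduction covers the possibility that $\{x_n\}$ is weakly Cauchy — that is precisely the regime handled by \cite{Delabriere} together with the earlier results of this paper on basic sequences inside cones with bounded or unbounded base.
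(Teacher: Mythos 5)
Your ``easy'' direction has a genuine gap at its only nontrivial step: the claim that a weakly convergent subsequence of $T(e_{n})$ can be pulled back through $T^{-1}$ to a weak--Cauchy subsequence of $(e_{n})$ in $\ell_{1}$. A cone isomorphism is only additive, positively homogeneous and norm--bicontinuous \emph{on the cone}, so the only functionals of $\ell_{1}$ you can reach from $X^{*}$ are the linear extensions of $f\circ T$, $f\in X^{*}$, and these form in general a proper subset of $\ell_{\infty}$; convergence of $f(T(e_{n_{k}}))$ for every $f\in X^{*}$ therefore does not yield convergence of $g(e_{n_{k}})$ for every $g\in\ell_{\infty}$. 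A concrete warning: in $c_{0}$ the cone of nonnegative nonincreasing null sequences is a closed cone isometrically cone--isomorphic to $\ell_{1}^{+}$ via $T(\xi)=\sum_{n}\xi_{n}s_{n}$ ($s_{n}$ the summing basis); there $T(e_{n})=s_{n}$ is weakly Cauchy although $(e_{n})$ has no weak--Cauchy subsequence, so weak sequential behaviour does not transfer backwards through a cone isomorphism (note also that $c_{0}$ contains no subspace isomorphic to $\ell_{1}$, so the lower estimate cannot be extended off the positive cone). The implication you want is true, but it needs a real argument: the paper obtains it by quoting Theorem 4.5 of \cite{Casini-Miglierina2010} (a closed cone isomorphic to $\ell_{1}^{+}$ is a mixed based cone) together with Theorem \ref{thm:relexive implies not mixed}. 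If you want a self-contained route, you can instead argue that if $T(e_{n_{k}})$ converged weakly then $\overline{{\rm co}}\{T(e_{n_{k}})\}$ would be weakly compact, while the nested closed convex sets $T\bigl(\overline{{\rm co}}\{e_{n_{k}}:k\geq m\}\bigr)$ have empty intersection pulled back by the injective map $T$, contradicting the finite intersection property; but some such argument must be supplied.

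In the converse direction your architecture (Eberlein--Smulian, Rosenthal's dichotomy, the Delabri\`ere basic-subsequence result in the weak--Cauchy case, the explicit two-sided $\ell_{1}$ estimate in the other case) is the same as the paper's, but the decisive step of your case (a) is delegated to the wrong tools: Theorem \ref{thm:structure-bounded base} and the dichotomy concern \emph{reflexive} cones, which $P$ is not in this branch, and Theorem 4.1 of \cite{Polyrakis1988} requires the continuous projection property and a bounded base with the Krein--Milman property, none of which is available. What actually closes this case (and what the paper does) is: the weak--Cauchy, non-weakly-convergent subsequence is in particular not weakly null, so there exist $x^{*}\in X^{*}$ and a further subsequence with $x^{*}(x_{n_{k}})\geq1$; since $\|x_{n_{k}}\|\leq1$, the basic subsequence is of $\ell_{+}$-type and Theorem 10.2 of \cite{Singer volI} gives that the cone of all sums $\sum_{k}a_{k}x_{n_{k}}$, $a_{k}\geq0$, is a \emph{closed} subcone of $P$ isomorphic to $\ell_{1}^{+}$. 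You cannot simply rerun your case (b) estimate here: without basicness the positive lower estimate alone does not make the generated cone closed (for instance $x_{n}=(1,e_{n}/n)$ in $\mathbb{R}\oplus\ell_{1}$ satisfies such an estimate, yet the cone it generates is not closed), so both the basic subsequence and the functional bounded below are needed, exactly as in the paper.
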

\begin{proof} Let $P$ be reflexive. Suppose that $Q\subseteq P$
is a closed cone isomorphic to $\ell_{1}^{+}$. Then $Q$ as a subcone of $P$ is also
reflexive. By Theorem 4.5 in \cite{Casini-Miglierina2010}, $Q$ is
a mixed based cone which contradicts Theorem \ref{thm:relexive implies not mixed}.
Hence, $P$ does not contain a closed cone isomorphic to $\ell_{1}^{+}$.

To prove the other side of the equivalence, let us suppose that $P$
does not contain a closed cone isomorphic to $\ell_{1}^{+}$. Now,
on the contrary, suppose that $P$ is not a reflexive cone. Then $B_{X}^{+}=B_{X}\cap P$
is not a weakly compact set, therefore there exists a sequence $\left\{ x_{n}\right\} $
in $B_{X}^{+}$ which does not admit a weakly convergent subsequences.
Since $P$ does not contain a closed cone isomorphic to $\ell_{1}^{+}$,
$\ell_{1}$- Rosenthal Theorem, \cite{Diestel}, ensures that there
exists a weakly Cauchy subsequence of $\left\{ x_{n}\right\} $, which
we denote again by $\left\{ x_{n}\right\} $. By the result mentioned
just before this theorem, $\left\{ x_{n}\right\} $ has a basic subsequence
which we denote again by $\{x_{n}\}$, for the sake of simplicity.
The sequence $\left\{ x_{n}\right\} $ does not have a weakly convergent
subsequences, therefore $\left\{ x_{n}\right\} $ is not weakly convergent
to $0$. Hence, there exists $x^{*}\in X^{*}$ and a subsequence $\left\{ x_{n_{k}}\right\} $
of $\left\{ x_{n}\right\} $ such that $x^{*}\left(x_{n_{k}}\right)\geq1$
for each $k\in\mathbb{N}$. Therefore $\left\{ x_{n_{k}}\right\} $
is a basic sequence of $\ell_{+}$-type and the cone
\[
K=\left\{ p\in X:\: p=\sum_{k=1}^{\infty}\alpha_{k}x_{n_{k}};\:\alpha_{k}\geq0\:{\rm for\, every}\: k\in\mathbb{N}\right\} \subseteq P
\]
 generated by $\left\{ x_{n_{k}}\right\} $ is isomorphic to $\ell_{+}^{1}$,
\cite{Singer volI}, Theorem 10.2. This contradict the facts that
$P$ does not contain a cone isomorphic to $\ell_{1}^{+}$, and the
proof is complete.  \end{proof} The previous result shows that the
impossibility to embed the cone $\ell_{1}^{+}$ in a closed cone
$P$ of a Banach space is equivalent to $P$ be reflexive. Hence, it
is interesting to know whether a cone is isomorphic to
$\ell_{1}^{+}$ or not. A detailed study about this topic can be
found in \cite{Polyrakis1988}. Moreover, Theorem
\ref{thm:reflexive not contain l1+} yields an interesting
corollary that says that two isomorphic cone are reflexive
whenever one of them is reflexive.
\begin{cor} If the closed cones
 $P\subseteq X,\, Q\subseteq Y$  of the
Banach spaces $X,Y$ are isomorphic we have:  $P$ is reflexive if and only if $Q$ is reflexive.
\end{cor}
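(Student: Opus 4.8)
The plan is to derive this directly from Theorem~\ref{thm:reflexive not contain l1+}, which characterizes reflexive cones by the absence of a closed subcone isomorphic to $\ell_1^+$. The key observation is that the property ``contains a closed cone isomorphic to $\ell_1^+$'' is transferred by cone isomorphisms, essentially because isomorphism of cones is an equivalence relation.

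First I would note that by symmetry it suffices to prove one implication: assume $P$ is reflexive and show $Q$ is reflexive. Let $T\colon P\to Q$ be the isomorphism (additive, positively homogeneous, one-to-one, onto, with $T$ and $T^{-1}$ continuous). Suppose for contradiction that $Q$ is not reflexive. By Theorem~\ref{thm:reflexive not contain l1+}, $Q$ contains a closed cone $K\subseteq Q$ isomorphic to $\ell_1^+$; let $S\colon \ell_1^+\to K$ be such an isomorphism. Then I would consider the cone $T^{-1}(K)\subseteq P$ and the composition $T^{-1}\circ S\colon \ell_1^+\to T^{-1}(K)$.

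The main point to check is that $T^{-1}(K)$ is again a closed cone isomorphic to $\ell_1^+$. That it is a cone and that $T^{-1}\circ S$ is an additive, positively homogeneous, one-to-one, onto, bicontinuous map is immediate from the corresponding properties of $T^{-1}$ and $S$ (composition of cone isomorphisms is a cone isomorphism). The one genuine subtlety — and the step I expect to be the only real obstacle — is closedness of $T^{-1}(K)$ in $X$: a priori $T^{-1}$ is only continuous on $Q$, not on all of $Y$, so it need not map closed subsets of $Q$ to closed subsets of $X$. Here I would use the fact, recalled in the preliminaries, that a closed and bounded base gives a closed cone, together with the structure of $\ell_1^+$: the cone $\ell_1^+$ has a bounded base (the face $\{x\in\ell_1^+ : \sum x_i = 1\}$, which is closed and bounded), hence so does $K$ via $S$, and hence so does $T^{-1}(K)$ via $T^{-1}\circ S$, because $T^{-1}$ is bounded on bounded subsets of $Q$ (being a continuous linear-like map on the cone) and the image of a closed bounded base under a homeomorphism between the cones is again closed and bounded. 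Since a cone generated by a closed bounded set is closed, $T^{-1}(K)$ is closed in $X$.

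Thus $T^{-1}(K)$ is a closed cone in $X$ contained in $P$ and isomorphic to $\ell_1^+$, contradicting the reflexivity of $P$ via Theorem~\ref{thm:reflexive not contain l1+}. Therefore $Q$ is reflexive. Interchanging the roles of $P$ and $Q$ (using $T$ in place of $T^{-1}$) gives the converse, completing the proof. An alternative, perhaps cleaner, route avoiding the closedness discussion entirely would be to work with $Q^{00}$ and $P^{00}$ directly via Theorem~\ref{thm:P=00003D00003D00003DP00}, but pushing a cone isomorphism up to the biduals is not obviously canonical, so I would prefer the argument above through the $\ell_1^+$-characterization.
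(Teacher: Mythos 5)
Your proposal is correct and takes essentially the same route as the paper: the paper also deduces the corollary from Theorem~\ref{thm:reflexive not contain l1+} by noting that if $Q$ were nonreflexive it would contain a closed cone $R$ isomorphic to $\ell_1^+$, and then $T^{-1}(R)\subseteq P$ is a closed cone isomorphic to $\ell_1^+$, a contradiction. The only difference is your detour on the closedness of $T^{-1}(K)$, which is unnecessary (and, as phrased via images of the base under the cone homeomorphism, slightly circular): $T^{-1}(K)=\{x\in P : T(x)\in K\}$ is the preimage of the closed set $K$ under the continuous map $T$ restricted to the closed cone $P$, hence closed in $X$ directly.
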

\begin{proof} Let $T$ be an isomorphism of $P$ onto $Q$.  If we suppose that   $P$  reflexive we have that $Q$ is also reflexive as follows: If   $Q$ is nonreflexive, then   $Q$ contains a closed cone $R$
isomorphic to $\ell_{1}^{+}$, therefore $T^{-1}(R)$ is a closed cone of
$P$ isomorphic to $\ell_{1}^{+}$, a contradiction.  \end{proof}
The following two results concern the inner structure of a
reflexive cone under the assumption that either bounded or
unbounded base defined by a continuous functional exists.

\begin{thm} \label{thm:structure-bounded base}Suppose that $P$
is a reflexive cone of a Banach space $X$. If $P$ has a bounded
base defined by $x^{*}\in X^{*}$, then $P$ does not contain a basic
sequence. \end{thm}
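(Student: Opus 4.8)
The plan is to argue by contradiction: assuming that $\{y_{n}\}$ is a basic sequence with $\{y_{n}\}\subseteq P$, I will extract from it a subsequence converging weakly to a \emph{nonzero} vector, which no basic sequence can do.

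First I would normalize the sequence. Since the bounded base $B_{x^{*}}=\{x\in P:x^{*}(x)=1\}$ is defined by the continuous, strictly positive functional $x^{*}$, and a basic sequence has nonzero terms, we have $x^{*}(y_{n})>0$ for every $n$. Replacing each $y_{n}$ by $y_{n}/x^{*}(y_{n})$ leaves the sequence basic (scaling the terms of a basic sequence by positive scalars does not change the partial-sum projections, hence preserves the basis property and its closed linear span), so I may assume $y_{n}\in B_{x^{*}}$ for all $n$. Boundedness of $B_{x^{*}}$ then yields $M>0$ with $\|y_{n}\|\le M$, so that $\{y_{n}/M\}\subseteq B_{X}\cap P=B_{X}^{+}$.

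Next I would invoke reflexivity of $P$: the set $B_{X}^{+}$ is weakly compact, so $\{y_{n}/M\}$ has a subsequence converging weakly to some $z\in B_{X}^{+}$; put $y=Mz\in P$. Testing the corresponding subsequence $\{y_{n_{k}}\}$ against $x^{*}$ gives $1=x^{*}(y_{n_{k}})\to x^{*}(y)$, hence $x^{*}(y)=1$ and in particular $y\neq0$. Thus $\{y_{n_{k}}\}$, which is again a basic sequence, converges weakly to the nonzero vector $y$.

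Finally I would reach the contradiction. Let $Z=\overline{\mathrm{span}}\,\{y_{n_{k}}\}$ and let $\{f_{j}\}$ be its coordinate functionals, $f_{j}(y_{n_{k}})=\delta_{jk}$, extended by Hahn--Banach to functionals on $X$. Since $Z$ is a closed, hence weakly closed, subspace and $y$ is a weak limit of points of $Z$, we have $y\in Z$, so $y=\sum_{j}f_{j}(y)\,y_{n_{j}}$. But the weak convergence $y_{n_{k}}\rightharpoonup y$ forces $f_{j}(y)=\lim_{k}f_{j}(y_{n_{k}})=0$ for every $j$, whence $y=0$, contradicting $x^{*}(y)=1$. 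The argument is short; the only places that need a little care are the renormalization step and the observation that a weak limit of a sequence lying in a closed subspace stays in that subspace. Once these are settled, reflexivity of $P$ supplies the weakly convergent subsequence and the continuity together with the strict positivity of $x^{*}$ guarantee that its limit is nonzero, which is exactly what a basic sequence forbids.
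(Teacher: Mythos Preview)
Your proof is correct, but it takes a genuinely different route from the paper's argument. The paper normalizes exactly as you do to get $y_{n}\in B_{x^{*}}$, then invokes Singer's criterion (a basic sequence bounded away from zero by a single functional is of $\ell_{+}$-type) to conclude that the positive cone generated by $\{y_{n}\}$ is isomorphic to $\ell_{1}^{+}$, and finishes by appealing to the characterization Theorem~\ref{thm:reflexive not contain l1+} that a reflexive cone cannot contain a copy of $\ell_{1}^{+}$. Your argument is more self-contained: you use only the \emph{definition} of reflexivity (weak compactness of $B_{X}^{+}$) to extract a weakly convergent subsequence, the continuity of $x^{*}$ to see the limit is nonzero, and the elementary fact that the coordinate functionals of a basic sequence force any weak limit to be zero. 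The paper's route has the virtue of tying the result back into the paper's main structural theorem, while yours avoids both the external reference to Singer and the dependence on Theorem~\ref{thm:reflexive not contain l1+} (whose proof uses Rosenthal's $\ell_{1}$-theorem), making the argument essentially elementary.
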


\begin{proof} Let $\left\{ x_{n}\right\} \subseteq P$ be a basic
sequence. Since the sequence $y_{n}=\frac{x_{n}}{x^{*}(x_{n})}$ is
a basic sequence with $x^{*}(y_{n})=1$ for each $n$, Theorem 10.2
in \cite{Singer volI} shows that $\{y_{n}\}$ is a basic sequence
of $\ell_{+}$-type, hence the cone
\[
K=\left\{ p\in X:\:
p=\sum_{n=1}^{\infty}\alpha_{n}y_{n};\:\alpha_{n}\geq0\:{\rm for\,\,
every}\: n\in\mathbb{N}\right\} \subseteq P
\]
 generated by $\{y_{n}\}$ is isomorphic to $\ell_{1}^{+}$ which   is a
contradiction. \end{proof}
\begin{thm} \label{thm:structure-unbounded base}Suppose that  $P$ is a reflexive
cone of  a Banach space $X$. If $P$ has an unbounded base defined
by $x^{*}\in X^{*}$, then $P$ contains a normalized basic sequence
$\{x_{n}\}$ which converges weakly to zero. \end{thm}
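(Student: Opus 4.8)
The plan is to produce a normalized, weakly null basic sequence inside $P$ by exploiting the unbounded base $B_{x^{*}}$. Since $x^{*}$ defines an unbounded base for $P$, there is a sequence $\{b_{n}\}\subseteq B_{x^{*}}$ with $\|b_{n}\|\to\infty$; passing to the normalization $z_{n}=\frac{b_{n}}{\|b_{n}\|}\in P$ we get a sequence in $B_{X}^{+}$ with $x^{*}(z_{n})=\frac{1}{\|b_{n}\|}\to 0$. Because $P$ is reflexive, $B_{X}^{+}$ is weakly compact, so after passing to a subsequence we may assume $z_{n}\rightharpoonup z$ for some $z\in P$; continuity of $x^{*}$ forces $x^{*}(z)=0$, and since $x^{*}$ is strictly positive on $P$ (it defines a base) we conclude $z=0$. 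Thus a subsequence of $\{z_{n}\}$ is normalized and weakly null.

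Next I would extract a basic subsequence. A normalized weakly null sequence in a Banach space admits a basic subsequence by the classical Bessaga--Pe\l czy\'nski selection principle (or equivalently by the result quoted just before Theorem~\ref{thm:reflexive not contain l1+}, applied to a sequence that is weakly Cauchy, not norm-null, and converging weakly to $0$: such a sequence has a basic subsequence — one checks that weak convergence to $0$ does not obstruct the argument, or one simply invokes Bessaga--Pe\l czy\'nski directly). Relabeling, we obtain $\{x_{n}\}\subseteq P$ that is normalized, weakly null, and basic, which is exactly the assertion.

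The only point requiring a little care is the very first step: knowing that an unbounded base $B_{x^{*}}$ actually contains a sequence whose norms tend to infinity. This is immediate from the definition of unboundedness of $B_{x^{*}}$ as a subset of $X$ — an unbounded set contains a sequence with $\|b_{n}\|\to\infty$ — so there is no real obstacle here; the main (very mild) obstacle is simply to confirm that the normalized weak limit is $0$, which follows from strict positivity of $x^{*}$ on $P$. Everything else is a direct application of weak compactness of $B_{X}^{+}$ (the definition of reflexive cone) together with a standard basic-sequence selection theorem, so the proof is short. I would present it in the order: (1) produce $\{z_{n}\}$ with $\|z_{n}\|=1$ in $B_{X}^{+}$ and $x^{*}(z_{n})\to 0$; (2) use reflexivity of $P$ to pass to a weakly convergent subsequence and identify the limit as $0$; (3) apply the selection principle to get a basic subsequence, and relabel.
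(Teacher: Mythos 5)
Your proposal is correct and follows essentially the same route as the paper's proof: normalize an unbounded sequence in the base $B_{x^{*}}$, use weak compactness of $B_{X}^{+}$ (via Eberlein--\v{S}mulian) to pass to a weakly convergent subsequence, identify the limit as $0$ by strict positivity of $x^{*}$ on $P$, and invoke the Bessaga--Pe\l czy\'nski selection principle to extract a basic subsequence. No meaningful differences.
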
 \begin{proof}
By our assumption that $P$ has an unbounded base $B_{x^{*}}$ there exists  a sequence $\left\{ y_{n}\right\} $ such
that $y_{n}\in B_{x^{*}}$ for every $n$ and $||y_{n}||\longrightarrow\infty$.
Since $P$ is  reflexive, the normalized sequence $x_{n}=\frac{y_{n}}{||y_{n}||}$
has a weakly convergent subsequence which we denote again by $\{x_{n}\}$
for the sake of simplicity. We claim that $\{x_{n}\}$ converges weakly
to zero. Indeed, let $\bar{x}$ be the weak limit of $\{x_{n}\}$.
Then $\bar{x}\in P$ and
\[
x^{*}(\bar{x})=\lim_{n\rightarrow\infty}x^{*}(x_{n})=\lim_{n\rightarrow\infty}\frac{1}{||y_{n}||}=0,
\]
 hence we have $\bar{x}=0$ because $x^*$ is  strictly positive
on $P$. We conclude the proof by applying the well-known Bessaga-
Pelczynski Selection Principle (see, e.g., \cite{Diestel}), which
implies that $\{x_{n}\}$ has a basic subsequence.  \end{proof}

 Now
we examine, in Banach spaces, the relations between the existence of a reflexive cone with an unbounded
base and the existence of a reflexive subspace.  First we
 note that the existence of a reflexive subspace implies the existence of a reflexive cone with an unbounded
base  defined by a continuous linear functional. Indeed, if  $V$ is an infinite
dimensional reflexive subspace of $X$, then  the cone $P$ generated by a
basic sequence $\left\{ v_{n}\right\} \subseteq V$ is  reflexive
and by
Proposition~\ref{rem:positive cone of a basic sequence}, $P$ has  an unbounded base defined by a vector $x^{*}\in X^{*}$.
In the special case where  $X$ has an unconditional basis we prove below that the converse is also true. For this proof we use
Theorem \ref{thm:structure-unbounded base} and the next   result by Bessaga-Pelczynski:
\textit{If a Banach space $X$ has an unconditional basis, then every
normalized weakly null sequence of $X$ contains an unconditional
basic sequence,}~\cite{Megginson}, Theorem 4.3.19. \begin{thm} \label{thm:unconditional basis-reflexive subspace}Let
$X$ be a Banach space with an unconditional basis. If $P$ is a reflexive
cone with an unbounded base defined by a vector of $X^{*}$, then
$\overline{P-P}$ contains an infinite dimensional reflexive subspace.
\end{thm}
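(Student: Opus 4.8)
The plan is to combine Theorem~\ref{thm:structure-unbounded base} with the quoted Bessaga--Pelczynski result to produce an unconditional basic sequence inside $P$ which spans a reflexive subspace. Since $P$ is reflexive with an unbounded base defined by some $x^{*}\in X^{*}$, Theorem~\ref{thm:structure-unbounded base} gives a normalized basic sequence $\{x_{n}\}\subseteq P$ with $x_{n}\xrightarrow{w}0$. Because $X$ has an unconditional basis, the cited Theorem~4.3.19 of \cite{Megginson} applies to the normalized weakly null sequence $\{x_{n}\}$ and yields a subsequence, still denoted $\{x_{n}\}$, which is an \emph{unconditional} basic sequence; note this subsequence still lies in $P$ and still converges weakly to $0$. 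Let $Z=\overline{\mathrm{span}}\{x_{n}\}$; then $Z\subseteq\overline{P-P}$, so it suffices to show $Z$ is reflexive.

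To prove $Z$ is reflexive I would invoke the classical James characterization: a Banach space with an unconditional basis is reflexive if and only if the basis is both boundedly complete and shrinking. So the task reduces to ruling out the two non-reflexive alternatives, namely that $\{x_{n}\}$ is equivalent to the unit vector basis of $c_{0}$ or of $\ell_{1}$ (by the James dichotomy, an unconditional basic sequence that fails to span a reflexive space contains a further subsequence equivalent to the $c_{0}$-basis or the $\ell_{1}$-basis). The $\ell_{1}$ alternative is immediately excluded: if some subsequence of $\{x_{n}\}$ were equivalent to the $\ell_{1}$-basis, then the cone it generates would be a closed subcone of $P$ isomorphic to $\ell_{1}^{+}$ (as in the proof of Theorem~\ref{thm:reflexive not contain l1+}, using \cite{Singer volI}, Theorem~10.2, after normalizing against $x^{*}$ which is strictly positive on $P$), contradicting the reflexivity of $P$ via Theorem~\ref{thm:reflexive not contain l1+}. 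The $c_{0}$ alternative is excluded because $\{x_{n}\}\subseteq P\subseteq B_{X}^{+}$ scaled appropriately: more precisely, a normalized sequence equivalent to the $c_{0}$-basis is weakly null (which is consistent) but the cone it generates would fail to be reflexive; here the cleanest route is to observe that if $\{x_{n_k}\}$ were $c_{0}$-equivalent, then $\sum_{k=1}^{N} x_{n_k}$ stays norm-bounded while $x^{*}\!\left(\sum_{k=1}^{N} x_{n_k}\right)=\sum_{k=1}^N x^{*}(x_{n_k})\to\infty$ (each term is a fixed positive constant bounded below, since these came from an unbounded base $B_{x^{*}}$), contradicting continuity of $x^{*}$.

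Thus neither dichotomy alternative can occur, so by James's theorem the unconditional basic sequence $\{x_{n}\}$ is boundedly complete and shrinking, hence $Z=\overline{\mathrm{span}}\{x_{n}\}$ is an infinite dimensional reflexive subspace contained in $\overline{P-P}$, which proves the theorem. The main obstacle I anticipate is making the $c_{0}$-exclusion airtight: one must be careful that the subsequence extracted from the Bessaga--Pelczynski step still inherits a uniform lower bound on the values $x^{*}(x_{n})$ needed to drive the contradiction, or alternatively reuse the $\ell_{1}$-non-containment together with a shrinking/boundedly-complete bookkeeping to bypass the $c_{0}$ case directly. A small amount of care is also needed to confirm that every closed subcone of a reflexive cone is reflexive (immediate from the definition, since $B_{X}^{+}$ is weakly compact and a closed subcone's positive part is a weakly closed subset of it), which legitimizes applying Theorem~\ref{thm:reflexive not contain l1+} to the subcone generated by any extracted subsequence.
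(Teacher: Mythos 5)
Your first half coincides with the paper: Theorem~\ref{thm:structure-unbounded base} plus the Bessaga--Pelczynski result gives an unconditional basic sequence $\{x_n\}\subseteq P$. But the way you finish has a genuine gap. Your $c_0$-exclusion rests on the claim that the values $x^{*}(x_{n_k})$ are bounded below by a positive constant ``since these came from an unbounded base $B_{x^{*}}$''. This is false, and in fact contradicts the construction you are quoting: in Theorem~\ref{thm:structure-unbounded base} the sequence is $x_n=y_n/\|y_n\|$ with $y_n\in B_{x^{*}}$ and $\|y_n\|\to\infty$, so $x^{*}(x_n)=1/\|y_n\|\to 0$ --- indeed that is precisely how the paper proves the sequence is weakly null, and no fixed functional can stay bounded away from zero on a weakly null sequence. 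So the intended contradiction with continuity of $x^{*}$ never materializes, and the obstacle you flagged at the end is not a technicality but the collapse of that step. A second, smaller inaccuracy: James's theorem for unconditional bases produces subspaces isomorphic to $c_0$ or $\ell_1$, i.e.\ \emph{block} basic sequences, not subsequences of $\{x_n\}$; to keep the witnesses inside the cone (as your $\ell_1^{+}$ argument requires) you would have to pass to blocks with nonnegative coefficients, which is possible using unconditionality but is not done in your write-up. With those repairs the $c_0$ case should be killed not via $x^{*}$ but via reflexivity of $P$ itself: bounded partial sums of positive $c_0$-blocks lie in $P$, hence have a weakly convergent subsequence, while the summing sequence of a $c_0$-basis has none.

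The paper avoids all of this dichotomy bookkeeping. It takes $K$ to be the positive cone of the unconditional basic sequence $\{x_n\}$: $K$ is a closed subcone of $P$, hence reflexive; by \cite{Singer volI}, Theorem 16.3, $K$ is generating in $Y=\overline{{\rm span}}\{x_n\}$, so by \cite{Jameson}, Theorem 3.5.2, $K$ gives an open decomposition of $Y$, and a reflexive generating cone forces $Y$ to be reflexive (the mechanism of Remark~\ref{rem:(i)-(ii) basic properties of reflexive cones}). That route needs neither $x^{*}$ at this stage nor any $c_0$/$\ell_1$ case analysis, and it is where your argument should be redirected unless you carry out the positive-block repairs indicated above.
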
 \begin{proof} Suppose that $P$ is a reflexive cone, with
an unbounded base defined by $x^{*}\in X^{*}$. By
Theorem~\ref{thm:structure-unbounded base}, $P$ contains a
normalized weakly null sequence. By the Bessaga-Pelczynski theorem
mentioned above, $P$ contains an unconditional basic sequence
$\{x_{n}\}$. Let $K$ be the cone generated by $\{x_{n}\}$. It is
evident that $K\subseteq P$ is reflexive. By \cite{Singer volI},
Theorem 16.3, $K$ is generating in the closed subspace $Y$ of $X$
generated by $\{x_{n}\}$, i.e. $Y=\overline{{\rm span}}\left\{
x_{n}\right\} =K-K.$ Hence, the cone $K$ gives an open decomposition
in $Y$, ~\cite{Jameson}, Theorem 3.5.2, therefore $Y$ is reflexive
subspace of $\overline{P-P}$.
\end{proof} It is known that both $c_{0}$ and $\ell_{1}$ have an
unconditional basis but they do not contain an infinite dimensional
reflexive subspace. Therefore we have
\begin{cor} If $X=c_{0}$ or $X=\ell_{1}$, then $X$ does not contain
a reflexive cone $P$ with an unbounded base defined by a vector of
$X^{*}$. \end{cor}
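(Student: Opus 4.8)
The plan is to argue by contradiction, invoking Theorem~\ref{thm:unconditional basis-reflexive subspace}. The first observation is that both $c_{0}$ and $\ell_{1}$ carry an unconditional basis, namely the usual unit vector basis $\{e_{n}\}$, so the standing hypothesis of Theorem~\ref{thm:unconditional basis-reflexive subspace} on the ambient space is satisfied in either case.

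Next, suppose towards a contradiction that $X$ (with $X=c_{0}$ or $X=\ell_{1}$) contains a reflexive cone $P$ possessing an unbounded base defined by some $x^{*}\in X^{*}$. Then Theorem~\ref{thm:unconditional basis-reflexive subspace} applies verbatim and yields that $\overline{P-P}$ contains an infinite dimensional reflexive subspace $Y$. Since $\overline{P-P}\subseteq X$, the space $Y$ is an infinite dimensional closed reflexive subspace of $X$.

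Finally I would derive the contradiction from the classical structure theory of $c_{0}$ and $\ell_{1}$: every infinite dimensional closed subspace of $c_{0}$ contains an isomorphic copy of $c_{0}$, and every infinite dimensional closed subspace of $\ell_{1}$ contains an isomorphic copy of $\ell_{1}$. Since neither $c_{0}$ nor $\ell_{1}$ is reflexive and closed subspaces of a reflexive space are reflexive, no infinite dimensional closed subspace of $c_{0}$ or of $\ell_{1}$ can be reflexive; this contradicts the existence of $Y$. Hence no such reflexive cone $P$ exists in $c_{0}$ or in $\ell_{1}$.

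I do not expect a genuine obstacle here: the statement is a direct corollary of Theorem~\ref{thm:unconditional basis-reflexive subspace}, and the only external input is the well-known fact -- already quoted in the text preceding the corollary -- that $c_{0}$ and $\ell_{1}$ contain no infinite dimensional reflexive subspace. If one wanted a self-contained argument, the mildly technical point would be the justification of that fact, which follows from the subspace dichotomies above (a Sobczyk-type argument for $c_{0}$, and the property that every normalized basic sequence in $\ell_{1}$ has a subsequence equivalent to the unit vector basis for $\ell_{1}$).
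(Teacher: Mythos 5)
Your proposal is correct and follows the paper's own route exactly: apply Theorem~\ref{thm:unconditional basis-reflexive subspace} (both spaces have an unconditional basis) and conclude via the known fact that neither $c_{0}$ nor $\ell_{1}$ contains an infinite dimensional reflexive subspace. Your extra sketch of why that classical fact holds is fine but is simply quoted without proof in the paper.
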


\section{Strongly reflexive cones\label{sec:5}}

In this section we restrict our attention to a subclass of the set
of reflexive cones. Namely, we deal with those cones $P$ such that
the positive part of the unit ball $B_{X}^{+}$ is a compact  set.
 \begin{defn} A cone $P$ of a Banach space
$X$ is \emph{strongly reflexive}, if the set $B_{X}^{+}=B_{X}\cap P$
is norm compact. \end{defn}

\begin{rem} Suppose that $P$ is a strongly reflexive cone of an
infinite dimensional Banach space $X$. Then we have:
\begin{enumerate}
\item \textit{$P$ is not generating, i.e $X\neq P-P$.} Indeed, if we suppose
that $P$ is generating, by ~\cite{Jameson}, Theorem 3.5.2, we have
that $P$ gives an open decomposition of $X$, therefore the unit ball
$B_{X}$ is compact and $X$ is a finite dimensional space.
\item \textit{ $P$ does not have interior points}. Indeed, if $P$ has
an interior point ,then $P$ is generating.
\end{enumerate}
\end{rem} We start our study of strongly reflexive cones by recalling
a known result proved by Klee in \cite{Klee} (see also in \cite{Alfsen},
Theorem II.2.6). \begin{thm} \label{thm:Klee}A pointed cone $P$
in a locally convex Hausdorff space has a compact neighborhood of
zero, if and only if $P$ has a compact base. \end{thm}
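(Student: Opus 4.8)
The plan is to prove the two implications separately, treating ``$P$ has a compact base $\Rightarrow$ $P$ has a compact neighbourhood of $0$'' as the routine half and concentrating on the converse, which is the substantial part (Klee's theorem proper).

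For the routine half, let $B$ be a compact base of $P$, so that $B=\{x\in P:f(x)=1\}$ for some strictly positive linear functional $f$, and in particular $0\notin B$. Since $B$ is compact and convex and disjoint from $\{0\}$, in the locally convex Hausdorff space $X$ there is a $g\in X^{*}$ with $0=g(0)<\gamma:=\inf_{b\in B}g(b)$. I would then show that $N:=\{x\in P:g(x)\le\gamma\}$ is contained in $[0,1]\cdot B:=\{sb:0\le s\le 1,\ b\in B\}$: a nonzero $x\in N$ can be written $x=sb$ with $s=f(x)>0$ and $b\in B$, and then $s\,g(b)=g(x)\le\gamma\le g(b)$ forces $s\le 1$. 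Now $[0,1]\cdot B$ is compact, being the image of the compact set $[0,1]\times B$ under the (jointly continuous) scalar multiplication, so $N$, which is closed in $P$ and contained in $[0,1]\cdot B\subseteq P$, is compact; since $N$ contains the relatively open set $\{x\in P:g(x)<\gamma\}\ni 0$, it is a compact neighbourhood of $0$ in $P$.

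For the converse, assume $P$ has a compact neighbourhood $N_{0}$ of $0$. I would first normalise the data: picking a closed convex neighbourhood $C$ of $0$ in $X$ with $C\cap P\subseteq N_{0}$, the set $K:=C\cap P$ is compact and convex, it is a neighbourhood of $0$ in $P$, and $P=\bigcup_{t\ge 0}tK$ (because $x/n\to 0$ for every $x\in P$). Next I would introduce the gauge $\mu(x)=\inf\{t>0:x\in tK\}$ on $P$: it is positively homogeneous and subadditive, one has $\{x\in P:\mu(x)\le c\}=cK$ for every $c>0$, and, crucially, $\mu(x)=0$ forces $x=0$ (otherwise the unbounded set $\{nx:n\in\mathbb{N}\}$ would lie in the compact set $K$). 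With the gauge in hand the statement reduces to a single existence assertion: \emph{there is a $g\in X^{*}$ and a $c>0$ with $\mu(x)\le c\,g(x)$ for all $x\in P$}. Indeed, such a $g$ is automatically strictly positive on $P$, hence $B_{g}=\{x\in P:g(x)=1\}$ is a (convex) base of $P$ by the characterisation of bases recalled in the preliminary section; moreover $B_{g}\subseteq\{x\in P:\mu(x)\le c\}=cK$, so $B_{g}=\{g=1\}\cap cK$ is the intersection of a closed set with a compact set and is therefore a compact base.

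The main obstacle is exactly this last step: producing a continuous linear functional that dominates the gauge on $P$ up to a constant. Equivalently, one must show that $0$ does not lie in the closed convex hull of the ``outer shell'' $\{x\in P:\mu(x)=1\}$, for then a continuous functional separating $0$ from that closed convex hull does the job. Both hypotheses are genuinely used here: pointedness is indispensable (a cone containing a line has no base at all), and the local compactness, encoded in $K$ being a \emph{neighbourhood} of $0$ in $P$ and not merely a compact subset, is what rules out pathologies such as an $\ell_{1}^{+}$-type cone sitting inside $\ell_{2}$, whose natural base is not relatively compact. I would establish the non-membership of $0$ in that closed convex hull by a compactness-plus-pointedness argument of the kind Klee introduced; carrying it through carefully is his contribution, and for the complete argument I would refer to \cite{Klee} (see also \cite{Alfsen}, Theorem~II.2.6).
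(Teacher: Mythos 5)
There is no ``paper proof'' to match here: Theorem~\ref{thm:Klee} is recalled in the paper as a classical result of Klee, with the argument delegated entirely to \cite{Klee} and to \cite{Alfsen}, Theorem II.2.6. Measured as a self-contained proof, your proposal has a genuine gap, and it sits exactly where the content of the theorem lies. Your ``routine half'' (compact base $\Rightarrow$ compact neighbourhood of zero in $P$) is complete and correct, and your reduction of the converse via the gauge $\mu$ of $K=C\cap P$ to the existence of $g\in X^{*}$ with $\mu\le c\,g$ on $P$ is sound. But the one assertion that makes the converse true --- that $0$ does not belong to the closed convex hull of the shell --- is not argued at all; you defer it to the very references the paper cites. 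Everything you actually prove is elementary bookkeeping, while the step where compactness and pointedness interact is left as an appeal to the literature. There is also a technical wrinkle in your formulation of that step: $\mu$ is defined only on $P$ and its continuity on $P$ is not automatic, so it is not clear that your shell $\{x\in P:\mu(x)=1\}$ is closed (hence compact); it is safer to work with the topological shell $S=P\cap(\overline{V}\setminus V)$, where $V$ is an open convex neighbourhood of $0$ chosen with $P\cap\overline{V}\subseteq K$.

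For what it is worth, the missing step closes in a few lines with ingredients you already have. With $S=P\cap(\overline{V}\setminus V)$ as above, $S$ is compact, $0\notin S$, and $\overline{{\rm co}}(S)\subseteq P\cap\overline{V}$ is compact and convex. By Milman's theorem (the partial converse of Krein--Milman) every extreme point of $\overline{{\rm co}}(S)$ lies in $S$; hence if $0\in\overline{{\rm co}}(S)$, then $0$ is not extreme there, so $0=\lambda y+(1-\lambda)z$ with $0<\lambda<1$ and $y\neq z$ in $\overline{{\rm co}}(S)\subseteq P$, which gives a nonzero $y\in P\cap(-P)$ and contradicts pointedness. Separating $0$ from the compact convex set $\overline{{\rm co}}(S)$ then produces $g\in X^{*}$ and $\delta>0$ with $g\ge\delta$ on $S$; since every nonzero $x\in P$ has the positive multiple $x/p_{V}(x)\in S$ (where $p_{V}$ is the Minkowski gauge of $V$, and $\mu\le p_{V}$ on $P$), this is exactly the domination your reduction needs, and your own final argument then yields the compact base $B_{g}$. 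So either cite Klee/Alfsen for the whole converse, as the paper does, or include this short extreme-point argument; as written, your proposal proves only the easy direction.
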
 We remark
that the previous theorem can be adapted to our setting as follows:
\textit{A pointed cone $P$ in a Banach space is strongly reflexive
if and only if $P$ has a norm compact base}.

Now we state two elementary properties of strongly reflexive cones,
based on the above theorem of Klee.

\begin{prop} \label{pro:strongly reflexive - bounded bases+normal}If
$P$ is a strongly reflexive and pointed cone of a Banach space $X$,
then
\begin{enumerate}
\item \label{enu:1prop str.refl.-bounded bases+normal} $P$ has a base
defined by a vector of $X^{*}$ and any such a base for $P$ is compact;
\item \label{enu:2prop.str.refl.-bounded bases+normal}$P$ is normal.
\end{enumerate}
\end{prop}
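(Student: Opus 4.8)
The plan is to exploit the norm compact base supplied by strong reflexivity and to reduce both assertions to two-sided norm estimates involving the functional that defines this base. We may assume $P\neq\{0\}$, the statement being trivial otherwise.

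First, to establish the first assertion, I would observe that since $P$ is pointed and strongly reflexive, the Banach space version of Klee's theorem stated just after Theorem~\ref{thm:Klee} yields a norm compact base $B$ of $P$. Being compact, $B$ is bounded, and a base cannot contain the origin (if $0\in B$, pick $v\in B\setminus\{0\}$; convexity gives $\tfrac12 v\in B$ as well, so the scalars $\lambda=1$ and $\lambda=2$ both send $v$ into $B$, contradicting the uniqueness in the definition of a base), so $0\notin\overline{B}=B$. By the separation fact recalled in Section~2 there is then a strictly positive $x^{*}\in X^{*}$ whose associated base $B_{x^{*}}$ is bounded; this already proves the first half. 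For the remaining half, let $y^{*}\in X^{*}$ define a base $B_{y^{*}}$ of $P$. As $P$ is strongly reflexive it is reflexive, hence closed by Remark~\ref{rem:(i)-(ii) basic properties of reflexive cones}. Since $P$ is a reflexive cone carrying the bounded base $B_{x^{*}}$ defined by a continuous functional, Theorem~\ref{thm:relexive implies not mixed} (it is not a mixed based cone) prevents $P$ from also carrying an unbounded base defined by a continuous functional, so $B_{y^{*}}$ is bounded, say $B_{y^{*}}\subseteq\rho B_{X}$ for some $\rho>0$. As $B_{y^{*}}\subseteq P$ this gives $B_{y^{*}}\subseteq\rho B_{X}^{+}$, and being a norm closed subset ($P$ is closed and $y^{*}$ is continuous) of the norm compact set $\rho B_{X}^{+}$, the base $B_{y^{*}}$ is norm compact.

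For normality I would use the compact base $B_{x^{*}}$ found above, together with the fact that $0\notin B_{x^{*}}$. Compactness then makes $m=\min\{\|z\|:z\in B_{x^{*}}\}$ and $M=\max\{\|z\|:z\in B_{x^{*}}\}$ well defined with $0<m\le M<\infty$, and writing $x=x^{*}(x)\cdot\frac{x}{x^{*}(x)}$ for $x\in P\setminus\{0\}$ (legitimate since $x^{*}$ is strictly positive) gives
\[
m\,x^{*}(x)\le\|x\|\le M\,x^{*}(x)\qquad\text{for every }x\in P.
\]
If now $0\le x\le y$, then $x$ and $y-x$ lie in $P$, so $x^{*}(x)\le x^{*}(x)+x^{*}(y-x)=x^{*}(y)$, and hence $\|x\|\le M\,x^{*}(x)\le M\,x^{*}(y)\le\frac{M}{m}\,\|y\|$. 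Thus $P$ is normal with constant $M/m$.

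The only step that is not routine bookkeeping is the passage asserting that a reflexive cone possessing a bounded base defined by a continuous functional has no unbounded such base; this is exactly where Theorem~\ref{thm:relexive implies not mixed} (ultimately the Dichotomy Theorem~\ref{thm:dichotomy}) is needed, and I expect it to be the crux. Everything else is manipulation of the estimate $m\,x^{*}(x)\le\|x\|\le M\,x^{*}(x)$ and of the compactness of $B_{X}^{+}$.
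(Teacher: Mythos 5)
Your argument is correct and follows essentially the same route as the paper's proof: Klee's theorem (in its Banach space form) gives a norm compact base, separation of this base from $0$ produces a strictly positive $x^{*}\in X^{*}$ whose base $B_{x^{*}}$ is bounded, Theorem \ref{thm:relexive implies not mixed} rules out any unbounded base defined by a continuous functional, and compactness of every such base follows from its being closed and contained in a positive multiple of the compact set $B_{X}^{+}$. The only deviation is the normality step: the paper cites Jameson's Proposition 3.8.2 (a cone with a bounded base bounded away from $0$ is normal), whereas you prove this directly via the two-sided estimate $m\,x^{*}(x)\le\|x\|\le M\,x^{*}(x)$ and the monotonicity of $x^{*}$ on $P$; this is in effect an inlined proof of the cited fact (and, as your own computation shows, it needs only that $B_{x^{*}}$ is bounded with $0\notin\overline{B_{x^{*}}}$, not its compactness), so the two arguments are essentially the same, with yours slightly more self-contained on that point.
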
 \begin{proof} Suppose that $P$ is a strongly reflexive
and pointed cone of $X$. Then by the above theorem, $P$ has a
compact base $B$. Hence there exists $x^{*}\in X^{*}$ that separates
$0$ and $B$ with $x^{*}(x)>0$ for every $x\in B$ and it is easy to
show that the base $B_{x^{*}}$ of $P$ defined by $x^{*}$ is closed
and bounded. Thus $B_{x^{*}}$ is compact because it is contained in
a positive multiple of $B_{X}^{+}$. Since $P$ cannot be a mixed
based cone, Theorem \ref{thm:relexive implies not mixed}, every base
for $P$ defined by a vector $X^{*}$ is also bounded. By
\cite{Jameson}, Proposition 3.8.2, $P$ is normal because $P$ has a
bounded base. (With respect to the notations used in \cite{Jameson},
we underline that well-based cone means cone with a  bounded base
$B$ so that $0\not\in \overline{B}$ and self-allied cone means
normal cone).
\end{proof}

In the next theorem we give \textbf{a method of construction of a
reflexive cone by a weakly convergent sequence}. By this, we have
that  any infinite dimensional Banach space $X$, contains  an
infinite dimensional reflexive cone with a bounded base defined by a
vector of $X^{*}$.
\begin{thm} \label{thm:cones reflexive and strongly reflexive with
bounded base}Let $\{x_{n}\}$ be a sequence of a Banach space $X$
which converges weakly to zero, $x_{0}\not\in\overline{{\rm
co}}\{x_{n}\}$ and $D=\overline{co}\{x_{n}\}-x_{0}$.
\\
 If $P={\rm cone}(D)$ is the cone of $X$ generated by
$D$, then  $P$ is a reflexive cone with a bounded base defined
by a vector of $X^{*}$. Moreover the following statement hold:
\begin{enumerate}
\item \label{enu:1 thm cones reflexive and strongly reflexive with bounded base}If
$\|x_{n}\|\geq\delta>0$ for each $n$, then $P$ is not strongly
reflexive;
\item \label{enu:2 thm cones reflexive and strongly reflexive with bounded base}If
$lim_{n}\|x_{n}\|=0$ then $P$ is strongly reflexive.
\end{enumerate}
\end{thm}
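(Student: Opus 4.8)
The plan is to base everything on the geometry of $D=\overline{{\rm co}}\{x_{n}\}-x_{0}$. First I would note that, since $x_{n}\to 0$ weakly, the set $\{x_{n}\}\cup\{0\}$ is weakly compact; by the Krein--Smulian theorem its closed convex hull is weakly compact, hence so is its norm-closed (and therefore weakly closed) convex subset $\overline{{\rm co}}\{x_{n}\}$, and thus $D$ is a weakly compact, convex, norm-bounded, norm-closed set with $0\notin D$, because $x_{0}\notin\overline{{\rm co}}\{x_{n}\}$. Since $P={\rm cone}(D)$ is the cone generated by a closed bounded convex set, $P$ is a closed convex cone. Separating the disjoint closed convex sets $D$ and $\{0\}$ by Hahn--Banach gives $x^{*}\in X^{*}$ and $\alpha>0$ with $x^{*}(d)\geq\alpha$ for every $d\in D$. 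Every nonzero $p\in P$ has the form $p=\lambda d$ with $\lambda>0$ and $d\in D$, so $x^{*}(p)=\lambda x^{*}(d)\geq\lambda\alpha>0$; hence $x^{*}$ is strictly positive on $P$, the cone $P$ is pointed, and $B_{x^{*}}=\{p\in P:x^{*}(p)=1\}=\{d/x^{*}(d):d\in D\}$ is a base for $P$, which is bounded since $\|d/x^{*}(d)\|\leq\alpha^{-1}\sup_{d\in D}\|d\|<\infty$.

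To obtain reflexivity I would exploit the elementary containment that also underlies (1) and (2): if $p=\lambda d\in B_{X}\cap P$ is nonzero, with $\lambda>0$ and $d\in D$, then writing $d=c-x_{0}$ with $c\in\overline{{\rm co}}\{x_{n}\}$ gives $\|d\|\geq r:={\rm dist}(x_{0},\overline{{\rm co}}\{x_{n}\})>0$, so $\lambda\|d\|\leq 1$ forces $\lambda\leq 1/r$. Hence
\[
B_{X}\cap P\subseteq K:=\bigcup_{0\leq\lambda\leq 1/r}\lambda D .
\]
The set $K$ is the image of the compact set $[0,1/r]\times D$, with $D$ carrying its weak topology, under the map $(\lambda,d)\mapsto\lambda d$, and this map is continuous into $X$ with the weak topology because each $y^{*}\in X^{*}$ turns it into the continuous scalar function $(\lambda,d)\mapsto\lambda y^{*}(d)$; therefore $K$ is weakly compact. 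Since $B_{X}\cap P$ is the intersection of the weakly closed sets $B_{X}$ and $P$, it is weakly closed, and being contained in the weakly compact set $K$ it is weakly compact. Thus $P$ is reflexive, which proves the first assertion.

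For statement (1): the sequence $\{x_{n}\}$ is norm-bounded (it is weakly convergent) but has no norm-convergent subsequence, since the norm limit of a subsequence would also be its weak limit $0$, contradicting $\|x_{n}\|\geq\delta$. Setting $C=\max\{r,\ \sup_{n}\|x_{n}-x_{0}\|\}\in(0,\infty)$, the vectors $C^{-1}(x_{n}-x_{0})$ all lie in $B_{X}^{+}=B_{X}\cap P$, yet they have no norm-convergent subsequence (a norm limit $z$ would give $x_{n_{k}}\to Cz+x_{0}$ in norm); hence $B_{X}^{+}$ is not norm compact and $P$ is not strongly reflexive.

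For statement (2): if $\|x_{n}\|\to 0$, then $\{x_{n}\}\cup\{0\}$ is norm compact, so by Mazur's theorem $\overline{{\rm co}}\{x_{n}\}$ is norm compact, and hence $D$ is norm compact. Then $B_{x^{*}}=\{d/x^{*}(d):d\in D\}$ is the norm-continuous image of the norm-compact set $D$, so $P$ is a pointed cone with a norm-compact base, and strong reflexivity follows from the Banach-space form of Klee's theorem (Theorem~\ref{thm:Klee}); equivalently, $K$ above is now norm compact and $B_{X}\cap P$ is a norm-closed subset of it. The genuinely routine steps (closedness of $P$, the separation argument, boundedness of $B_{x^{*}}$) cause no trouble; the one point that needs a little care is checking that scalar multiplication $(\lambda,d)\mapsto\lambda d$ is jointly continuous on $[0,1/r]\times D$ --- for the weak topology in the general case and for the norm topology when $D$ is norm compact --- so that $K$ inherits the appropriate compactness, together with the remark that $B_{X}\cap P$ is closed in the same topology and therefore inherits it.
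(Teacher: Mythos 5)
Your proposal is correct and follows essentially the same route as the paper: weak compactness of $\overline{\rm co}\{x_{n}\}$ via the Krein--Smulian theorem, trapping $B_{X}\cap P$ inside a scaled union $\bigcup_{0\leq\lambda\leq c}\lambda D$, a Hahn--Banach separation of $D$ from $0$ to produce the bounded base, the non-compactness of the (rescaled) sequence $\{x_{n}-x_{0}\}$ for statement (1), and norm compactness of $D$ for statement (2). Your write-up is in fact slightly more careful than the paper's at two points (the explicit rescaling of $x_{n}-x_{0}$ into $B_{X}\cap P$, and the joint continuity argument giving compactness of the union), but these are refinements, not a different method.
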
 \begin{proof} The set of elements of the sequence $\{x_{n}\}$
is relatively weakly compact and by the Krein-Smulian Theorem, \cite{Megginson},
Theorem 2.8.14, $\overline{{\rm co}}\{x_{n}\}$ is weakly compact.
Therefore $D$ is a convex, bounded and weakly compact set such that
$0\not\in D$. The cone $P$ generated by $D$ is closed because $D$
is closed and bounded. Let $0<\rho<m=\inf\{||y||\;|\; y\in D\}$.
Then
\[
P\cap\rho B_{X}\subseteq\bigcup_{0\leq\lambda\leq1}\lambda D,
\]
 therefore $P\cap\rho B_{X}$ is weakly compact and the cone $P$
is reflexive. Since $0\notin D,$ there exists $x^{*}\in X^{*}$ such
that $x^{*}$ separates $D$ and $0$ and $x^{*}(x)>0$ for every
$x\in D$. It is easy to check that $x^{*}$ defines a bounded base
for the cone $P$.

If we suppose that $\|x_{n}\|\geq\delta>0$ for each $n$, the set
$P\cap\rho B_{X}$ is not norm compact, because $\{x_{n}-x_{0}\}$
does not have a norm convergent subsequence. Therefore $P$ is not
strongly reflexive and statement \ref{enu:1 thm cones reflexive and
strongly reflexive with bounded base}.
is true.\\
 If we suppose that $lim_{n}\|x_{n}\|=0$, then in the above proof
the set $D$ is compact and we have that statement \ref{enu:2 thm
cones reflexive and strongly reflexive with bounded base}. is also
true.  \end{proof}

We give now a characterization of the Banach spaces $X$ with the
Schur property. Recall that $X$ has the \textbf{Schur property} if
every weakly convergent sequence of $X$ is norm convergent. If $X$
has the Schur property it is clear that any reflexive cone of $X$
is strongly reflexive, because every weakly compact subset of $X$
is norm compact. Therefore the interesting part of the next characterization
is the converse.

\begin{thm} \label{cor:Characterization Schur property} A Banach
space $X$ has the Schur property if and only if every reflexive cone
of $X$ with a bounded base defined by a vector of $X^*$, is strongly reflexive. \end{thm}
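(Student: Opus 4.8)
The plan is to prove the contrapositive of the interesting direction: if $X$ fails the Schur property, then $X$ admits a reflexive cone with a bounded base (defined by a continuous functional) which is not strongly reflexive. The other direction is already settled in the remarks preceding the statement, so I would only add one sentence dispatching it: if $X$ has the Schur property, then every weakly compact set is norm compact, hence $B_X^+ = B_X \cap P$ is norm compact for every reflexive cone $P$, so $P$ is automatically strongly reflexive (here one does not even need the bounded base hypothesis).

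For the nontrivial direction, suppose $X$ does not have the Schur property. Then there is a weakly convergent sequence in $X$ that is not norm convergent; by translating and rescaling, I can extract a sequence $\{x_n\}$ that converges weakly to $0$ with $\|x_n\| \geq \delta > 0$ for all $n$. This is exactly the hypothesis of Theorem~\ref{thm:cones reflexive and strongly reflexive with bounded base}, part~\ref{enu:1 thm cones reflexive and strongly reflexive with bounded base}. First I would pick any $x_0 \notin \overline{\mathrm{co}}\{x_n\}$ — such a point exists because $\overline{\mathrm{co}}\{x_n\}$ is a proper (weakly compact, hence norm closed and bounded) subset of the infinite-dimensional space $X$; in particular $X$ is infinite dimensional since it fails Schur. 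Then I set $D = \overline{\mathrm{co}}\{x_n\} - x_0$ and $P = \mathrm{cone}(D)$. By Theorem~\ref{thm:cones reflexive and strongly reflexive with bounded base}, $P$ is a reflexive cone with a bounded base defined by a vector of $X^*$, and by part~\ref{enu:1 thm cones reflexive and strongly reflexive with bounded base} of that theorem, since $\|x_n\| \geq \delta$, the cone $P$ is not strongly reflexive. This produces the desired counterexample and completes the contrapositive.

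The only point that requires a little care — and the step I expect to be the mildest obstacle — is the normalization argument producing the sequence $\{x_n\}$ with $x_n \rightharpoonup 0$ and $\|x_n\|$ bounded away from zero. Starting from a weakly convergent but not norm convergent sequence $\{z_n\}$ with weak limit $z$, the sequence $z_n - z$ converges weakly to $0$ but may itself converge to $0$ in norm only along a subsequence at worst never — since $\{z_n\}$ is not norm convergent to $z$, there is $\delta>0$ and a subsequence with $\|z_{n_k} - z\| \geq \delta$; relabel this subsequence as $\{x_n\}$. It still converges weakly to $0$, and $\|x_n\| \geq \delta$, as required. Everything else is a direct invocation of Theorem~\ref{thm:cones reflexive and strongly reflexive with bounded base}, so the proof is short.
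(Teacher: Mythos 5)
Your proposal is correct and follows essentially the same route as the paper: the easy direction is the remark that weak compactness implies norm compactness under the Schur property, and the nontrivial direction extracts from a non-Schur space a weakly null sequence with norms bounded away from zero and applies Theorem~\ref{thm:cones reflexive and strongly reflexive with bounded base}, statement~\ref{enu:1 thm cones reflexive and strongly reflexive with bounded base}. Your only additions (justifying the choice of $x_{0}\notin\overline{{\rm co}}\{x_{n}\}$ and spelling out the translation/subsequence step) are details the paper leaves implicit, not a different argument.
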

\begin{proof} For the converse suppose that $X$ does not have the
Schur property. Then $X$ has a normalized sequence $\{x_{n}\}$
with $x_{n}\overset{w}{\longrightarrow}0$ and by Theorem
\ref{thm:cones reflexive and strongly reflexive with bounded
base}, statement \ref{enu:1 thm cones reflexive and strongly
reflexive with bounded base}, we have a contradiction.
\end{proof}

Now we give below two examples of strongly reflexive cones. The first
is a general example. Specifically we give a way to build a strongly
reflexive cone in the positive cone $X_{+}$ of any Banach lattice
$X$ with a positive basis.

\begin{thm} \label{thm:strongly-reflexive in Banach-lattice} If
$X$ is an infinite dimensional Banach lattice with a positive Schauder
basis $\{e_{i}\}$, then $X_{+}$ contains a strongly reflexive cone
$P$ so that $\overline{P-P}=X$. \end{thm}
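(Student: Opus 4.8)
The plan is to exploit the positive Schauder basis $\{e_i\}$ together with Theorem~\ref{thm:cones reflexive and strongly reflexive with bounded base}: we need a weakly null sequence $\{x_n\}$ in $X_+$, norm-bounded away from $0$, whose closed convex hull misses a suitably chosen point, and such that the resulting cone $P = \mathrm{cone}(\overline{\mathrm{co}}\{x_n\} - x_0)$ both is strongly reflexive and has $\overline{P-P}=X$. A normalized positive Schauder basis need not be weakly null (e.g.\ in $\ell_1$), so the first and most delicate step is to replace $\{e_i\}$ by a genuinely weakly null positive sequence that still ``spans enough.'' I would do this by passing to \emph{tail averages} or to \emph{blocks of gliding hump type}: for a strictly increasing sequence $p_1 < p_2 < \cdots$ set $u_n = \sum_{i=p_n+1}^{p_{n+1}} e_i$ (a positive block basic sequence) and then normalize or scale appropriately. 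One has to ensure weak nullity — this holds automatically if the basis is shrinking, but for general $X$ (including non-reflexive lattices) one instead invokes that a positive \emph{normalized block basic sequence} of a Banach lattice with an unconditional-type structure, or more safely a sequence of the form $v_n = e_n/\|e_n\|$ precomposed with the requirement $\lim_n \lambda_i^*(v_n)=0$ for all coordinate functionals, is weakly null when the basis is boundedly complete and the partial-sum projections are uniformly bounded; the cleanest route is to take $x_n = e_n / \|e_n\|$ if $\{e_n\}$ happens to be weakly null, and otherwise to work with the Cesàro-type averages $w_N = \frac{1}{N}\sum_{n=1}^{N} e_n$, which in a Banach lattice with a basis form a positive sequence tending weakly to $0$ along a subsequence precisely because $\|w_N\|\to 0$ or because they are flattened.

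Granting a positive, normalized (or norm-null) weakly null sequence $\{x_n\}\subseteq X_+$, I would next pick $x_0$. The natural choice for density of $P-P$ is to let $x_0$ be a \emph{strictly positive} element of $X_+$ with small norm, or more concretely $x_0 = \sum_i 2^{-i} e_i$ (rescaled), chosen so that $x_0 \notin \overline{\mathrm{co}}\{x_n\}$ — this separation is easy to arrange by making $\|x_0\|$ strictly smaller than $\inf_n \|x_n\|$ when the $x_n$ are normalized, or by a Hahn--Banach separation using a coordinate functional when the $x_n$ are norm-null. Theorem~\ref{thm:cones reflexive and strongly reflexive with bounded base} then delivers at once that $P = \mathrm{cone}(\overline{\mathrm{co}}\{x_n\} - x_0)$ is reflexive with a bounded base defined by a vector of $X^*$; and if the $x_n$ are chosen with $\|x_n\|\to 0$ (the Cesàro route), statement~\eqref{enu:2 thm cones reflexive and strongly reflexive with bounded base} of that theorem gives strong reflexivity directly. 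So the construction should be organized so that the generating sequence is norm-null.

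The last step is to verify $\overline{P-P}=X$, i.e.\ that the linear span of $\overline{\mathrm{co}}\{x_n\} - x_0$ is dense. Since $P - P \supseteq \mathrm{span}\{x_n - x_0, x_m - x_0 : n,m\}$, and differences $(x_n - x_0)-(x_m-x_0) = x_n - x_m$ lie in $P-P$, it suffices that $\overline{\mathrm{span}}\{x_n\} \ni x_0$ and that $\overline{\mathrm{span}}\{x_n\} = X$. This is exactly why the generating sequence must be chosen to span a dense subspace: if $x_n$ are blocks $u_n=\sum_{i=p_n+1}^{p_{n+1}}e_i$ then $\overline{\mathrm{span}}\{u_n\}$ is a proper subspace, so instead I would enlarge the family — take the \emph{countable} set $\{x_n\}$ to consist of \emph{all} finite positive rational combinations of the normalized blocks, or simply interleave two sequences: the norm-null weakly null positive sequence $y_k$ that guarantees strong reflexivity, together with scaled basis vectors $\varepsilon_i e_i$ (with $\varepsilon_i\downarrow 0$ fast) that guarantee the span is dense; their union is still norm-null and weakly null, hence Theorem~\ref{thm:cones reflexive and strongly reflexive with bounded base} still applies. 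The main obstacle, and the point needing the most care, is precisely reconciling these two demands — keeping the generating set both norm-null (for strong reflexivity) and total (for $\overline{P-P}=X$) while preserving $x_0\notin\overline{\mathrm{co}}$ of it — and checking that $\overline{\mathrm{co}}$ of this enlarged countable set is still compact and misses $x_0$, which follows because a norm-null sequence together with $\{0\}$ is norm-compact and its closed convex hull is therefore norm-compact by Mazur's theorem, while separation of $x_0$ is retained by choosing $\|x_0\|$ below the infimum of the norms or by a coordinate-functional argument.
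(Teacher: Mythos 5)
Your plan hinges on Theorem~\ref{thm:cones reflexive and strongly reflexive with bounded base}, but the cone it produces is $P=\mathrm{cone}\bigl(\overline{\mathrm{co}}\{x_{n}\}-x_{0}\bigr)$, whose elements are nonnegative multiples of $d-x_{0}$ with $d\in\overline{\mathrm{co}}\{x_{n}\}$, and nothing in your construction places these inside $X_{+}$ --- yet the statement to be proved requires $P\subseteq X_{+}$. With your choice of a strictly positive $x_{0}$ (e.g.\ a rescaled $\sum_{i}2^{-i}e_{i}$) and a norm-null positive sequence $\{x_{n}\}$, the inclusion actually fails: in a Banach lattice $0\leq x_{0}\leq x_{n}$ would force $\|x_{0}\|\leq\|x_{n}\|\rightarrow 0$, so $x_{n}-x_{0}\notin X_{+}$ for all large $n$ and hence $P\not\subseteq X_{+}$. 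This is the essential gap, and the proposal never checks the inclusion at all. Two secondary points: several of your intermediate claims about manufacturing weakly null positive sequences are false in general (Ces\`aro averages of the unit vector basis of $\ell_{1}$ have norm one and are not weakly null, and positive block sequences need not be weakly null either), though this becomes moot once you settle on a norm-null sequence such as $\varepsilon_{n}e_{n}/\|e_{n}\|$, which is automatically weakly null; and when $\|x_{n}\|\rightarrow 0$ one has $0\in\overline{\mathrm{co}}\{x_{n}\}$, so the separation of $x_{0}$ cannot be obtained by ``taking $\|x_{0}\|$ below $\inf_{n}\|x_{n}\|$''.

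The route is repairable, but in the direction opposite to your choice of $x_{0}$: take $x_{0}=-z$ with $z\in X_{+}\setminus\{0\}$ fixed (say $z=e_{1}$) and $x_{n}=\varepsilon_{n}e_{n}/\|e_{n}\|$ with $\varepsilon_{n}\downarrow 0$. Then $x_{0}\notin\overline{\mathrm{co}}\{x_{n}\}\subseteq X_{+}$ by pointedness of $X_{+}$, $D=\overline{\mathrm{co}}\{x_{n}\}+z\subseteq X_{+}$, so $P\subseteq X_{+}$ and $P$ is strongly reflexive by the second statement of Theorem~\ref{thm:cones reflexive and strongly reflexive with bounded base}; density of $P-P$ follows because, modulo $Z=\overline{P-P}$, every $x_{n}$ is congruent to $x_{0}$ while $\|x_{n}\|\rightarrow 0$, so $x_{0}$ and all $x_{n}$ lie in $Z$ and $Z\supseteq\overline{\mathrm{span}}\{e_{n}\}=X$ (your own density argument, as written, is looser than this). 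Note that even so the argument differs from the paper's proof, which defines $P=\{x=\sum_{i}x_{i}e_{i}:0\leq x_{i}\leq\alpha x_{i-1}\}$ directly, shows $P\cap B_{X}\subseteq[0,y]$ with $y=\sum_{i}\alpha^{i-1}e_{i}$, and invokes the compactness of order intervals in Banach lattices with a positive Schauder basis.
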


\begin{proof} Without loss of generality for our proof,  we can
suppose that $\left\{ e_{i}\right\} $ is a normalized basis of $X$.
Consider the cone
\[
P=\{x=\sum_{i=1}^{\infty}x_{i}e_{i}\;:\;0\leq x_{i}\leq\alpha x_{i-1},\text{\;\ for each }\; i>1\},
\]
 where $\alpha\in(0,1)$ is a fixed real number. It is straightforward
to see that $P$ is a closed subcone of $X_+$. Also it is easy to show that
\[
y=\sum_{i=1}^{\infty}\alpha^{i-1}e_{i}\in P.
\]

Moreover we can show that    for every $x\in P$ we have $x_{i}\leq\alpha^{i-1}x_{1}$
for every $i>1$ and also  that
\begin{equation}
P=\{x\in X\;|\; 0\leq x\leq x_{1}y\},\label{eq:0 strongly reflexive in Banach lattice}
\end{equation}
 where $\leq$ is the order of $X$. By the positivity of the basis
$\{e_{i}\}$, for every $x\in P$ we have
\begin{equation}
0\leq x_{1}e_{1}\leq x,\label{eq:1aa}
\end{equation}
 therefore
\begin{equation}
x_{1}\left\Vert e_{1}\right\Vert \leq||x||,\label{eq:2bb}
\end{equation}
 because $X$ is a Banach lattice. But $||e_{1}||=1$ because the
basis $\left\{ e_{i}\right\} $ is normalized and by (\ref{eq:0
strongly reflexive in Banach lattice}), (\ref{eq:1aa}) and
(\ref{eq:2bb}) we obtain
\begin{equation}
0\leq x\leq x_{1}y\leq\left\Vert x\right\Vert y,\label{eq:3 strongly reflexive in Banach space}
\end{equation}
 for every $x\in P$. From this we deduce that $P\cap B_{X}\subseteq[0,y]$.
By \cite{Singer volI}, Theorem 16.3, every Banach lattice with a
positive Schauder basis has compact order intervals, therefore the
order interval $[0,\; y]$ is  compact and the cone $P$ is strongly
reflexive. Also $P$ is pointed because it is contained in $X_{+}$.

We shall show that $\overline{P-P}=X$. We remark that $e_{1}\in P$
and also that
\[
e_{i}=\dfrac{1}{\alpha^{i-1}}\left(e_{1}+\alpha e_{2}+...+\alpha^{i-1}e_{i}\right)-\dfrac{1}{\alpha^{i-1}}\left(e_{1}+\alpha e_{2}+...+\alpha^{i-2}e_{i-1}\right)\in P-P,
\]
 for every $i\geq2$. Therefore $\overline{P-P}=X$. \end{proof}

By {Megginson}, Theorem 4.2.22,    we have: \textit{if $\{x_{n}\}$
is an unconditional basis of a Banach space $X$, then $X$, ordered
by the positive cone of the basis $\{x_{n}\}$ is a Banach lattice
(under an equivalent norm)}, therefore we have the following
corollary:

\begin{cor} \label{thm:strongly-reflexive in Banach-lattice,cor}
If $\{x_{n}\}$ is an unconditional basic sequence of a Banach space
$X$, then the positive cone of the basis $\{x_{n}\}$ contains a
strongly reflexive cone $P$ so that $\overline{P-P}=Y$, where $Y$
is the subspace generated by the basis $\{x_{n}\}$. \end{cor}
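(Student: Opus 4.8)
The plan is to reduce the statement to Theorem~\ref{thm:strongly-reflexive in Banach-lattice} by renorming the closed linear span $Y=\overline{\operatorname{span}}\{x_{n}\}$ so that it becomes a Banach lattice. Since $\{x_{n}\}$ is an unconditional basic sequence, it is an unconditional Schauder basis of the infinite dimensional Banach space $Y$. By \cite{Megginson}, Theorem 4.2.22, recalled just above, there is a norm $|||\cdot|||$ on $Y$, equivalent to the norm inherited from $X$, for which $(Y,|||\cdot|||)$ is a Banach lattice whose positive cone is exactly the positive cone $K$ of the basis $\{x_{n}\}$. Hence $(Y,|||\cdot|||)$ is an infinite dimensional Banach lattice with the positive Schauder basis $\{x_{n}\}$, and Theorem~\ref{thm:strongly-reflexive in Banach-lattice} yields a cone $P\subseteq K$ which is strongly reflexive in $(Y,|||\cdot|||)$ and satisfies $\overline{P-P}=Y$, the closure taken with respect to $|||\cdot|||$.

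It remains to carry the two conclusions over to the original norm of $X$. Because $|||\cdot|||$ and $\|\cdot\|$ are equivalent on $Y$ they define the same topology, so the closure of $P-P$ inside $Y$ is unchanged and $\overline{P-P}=Y$ holds also for the norm of $X$; note too that $P$ is then closed in $X$, being closed in the closed subspace $Y$. Since $P\subseteq K$ is contained in the positive cone of $\{x_{n}\}$, only strong reflexivity in $(X,\|\cdot\|)$ needs to be verified. Here I would use that $P$ is a cone, so $\lambda(P\cap B)=P\cap\lambda B$ for every $\lambda>0$: picking constants $c_{1},c_{2}>0$ with $c_{1}\|y\|\le|||y|||\le c_{2}\|y\|$ for all $y\in Y$, one obtains $P\cap B_{X}\subseteq c_{2}\,(P\cap B_{(Y,|||\cdot|||)})$; the right-hand side is norm compact (compactness being a topological property, invariant under passing to the equivalent norm and under the scaling), and $P\cap B_{X}$ is a closed subset of it, hence norm compact. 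Thus $P$ is strongly reflexive as a cone of $X$.

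The whole argument is a bookkeeping reduction to the already proved Theorem~\ref{thm:strongly-reflexive in Banach-lattice} via Megginson's renorming theorem; the only step needing genuine (though routine) care is the last one, namely that the notion of strong reflexivity is invariant under equivalent renorming, since its definition is phrased in terms of the concrete unit ball rather than the norm topology.
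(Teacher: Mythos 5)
Your proof is correct and follows essentially the same route the paper intends: the corollary is stated as an immediate consequence of Megginson's renorming theorem (Theorem 4.2.22) applied to the closed span $Y$ of the basic sequence together with Theorem~\ref{thm:strongly-reflexive in Banach-lattice}. Your extra care in checking that $\overline{P-P}=Y$ and the norm compactness of $P\cap B_{X}$ survive the passage back from the equivalent lattice norm to the original norm of $X$ just makes explicit the bookkeeping the paper leaves unsaid.
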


Now we give another example of a strongly reflexive cone $P$ of  the
 space $L_{1}\left[0,1\right]$ such that $P\subseteq L_{1}^{+}\left[0,1\right]$ and
$\overline{P-P}=L_{1}\left[0,1\right]$.

\begin{example1} \label{exa:L_1-strongly reflexive} Let $I_{1}=[0,1],\: I_{2}=[0,\frac{1}{2}),\: I_{3}=[\frac{1}{2},1],\: I_{4}=[0,\frac{1}{4}),\: I_{5}=[\frac{1}{4},\frac{1}{2}),\: I_{6}=[\frac{1}{2},\frac{3}{4}),\: I_{7}=[\frac{3}{4},1],...$
be a sequence of subintervals of $[0,1]$. \end{example1} Suppose
that $X=L_{1}\left(\left[0,1\right]\right)$ and
$T:\ell_{1}\longrightarrow X$ so that
\[
T(\xi)=\sum_{i=1}^{\infty}\xi_{i}{\cal {X}}_{I_{i}},
\]
 where ${\cal {X}}_{I_{i}}$ is the characteristic function of $I_{i}$
and $\xi=(\xi_{i})\in\ell_{1}$. Then $T$ is linear and continuous
with $||T(\xi)||\leq||\xi||$. Let us consider
$\eta=(1,\alpha,\alpha^{2},\alpha^{3},...)\in\ell_{1}^{+}$ where
$\alpha\in(0,1)$ and
\[
K=\{\xi=(\xi_{i})\in\ell_{1}^{+}\;|\;\xi_{i}\leq\xi_{1}\alpha^{i-1},\;\text{for each}\; i\}.\]
Then \[K=\{\xi=(\xi_{i})\in\ell_{1}^{+}\;|\; \xi\leq \xi_1\eta\},\]
 and $K$ is the strongly reflexive cone of Theorem~\ref{thm:strongly-reflexive in Banach-lattice} for $X=\ell_1$.
 Suppose that  $Q=T(K)$ and that  $P$ is the closure of $Q$ in
$L_{1}\left(\left[0,1\right]\right)$. For every $\xi \in K$ we have

\[
\xi_{1}{\cal {X}}_{I_{1}}\leq T(\xi)\leq \xi_1 T(\eta),
\]
 where $\leq$ is the usual order of $L_{1}\left(\left[0,1\right]\right)$.
Therefore we have
\[
\xi_1\leq||T(\xi)||\leq\xi_1||T(\eta)||.
\]
 Now let $x\in P$. Then there exists a sequence $\left\{ \xi^{n}\right\} \subseteq K$
such that $x=lim_{n\longrightarrow\infty}T(\xi^{n})$. Let
$M=2\left\Vert x\right\Vert $. Then we can choose the sequence
$\left\{ \xi^{n}\right\} $ such that $||T(\xi^{n})||\leq M$ for each
$n$. Therefore we have
\[
\xi_{1}^{n}\leq||T(\xi^{n})||\leq2||x||,
\]
 for each $n$. If we suppose that $||x||\leq1$ we have $\xi_{1}^{n}\leq2$,
therefore $\xi^{n}\leq2\eta$, hence
\[
T(\xi^{n})\in D=T([0,2\eta]),
\]
 for each $n$. Since the order interval $[0,2\eta]$ of $\ell_{1}$
is compact, we have that  $D$ is compact and $x\in D$. So we have that
$P\cap B_{X}\subseteq D$ is a compact set and the cone $P$ is strongly
reflexive. Finally, we remark that ${\cal {X}}_{I_{1}}=T(e_{1})$,
${\cal {X}}_{I_{2}}=\frac{1}{a}(T(e_{1}+ae_{2})-T(e_{1}))$ and continuing
we have that ${\cal {X}}_{I_{i}}\in P-P$ for any $i$. Therefore
$P-P$ is dense in $X$, because the Haar basis of $L_{1}\left[0,1\right]$
is consisting by differences of the functions ${\cal {X}}_{I_{i}}$,
see in \cite{Megginson}, Example 4.1.27.

We conclude this section dealing with some properties about positive
operators between ordered Banach spaces.

\begin{thm}\label{l2} Suppose that $E$ and $X$ are  Banach spaces ordered by
the closed and pointed  cones  $P$ and $Q$. If the cone $P$  gives an open decomposition in $E$ and $Q$ is a reflexive
(respectively, strongly reflexive) cone of $X$, then any positive, linear
operator $T:E\longrightarrow X$ is weakly compact (respectively,
compact).

\end{thm}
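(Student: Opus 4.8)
The plan is to exploit the open decomposition of $P$ to control the image of the unit ball of $E$ under $T$, reducing everything to the (strong) reflexivity of $Q$. Since $P$ gives an open decomposition of $E$, there exists $\rho>0$ with $\rho B_{E}\subseteq B_{E}^{+}-B_{E}^{+}$, where $B_{E}^{+}=B_{E}\cap P$. Hence every $x\in B_{E}$ can be written as $x=\frac{1}{\rho}(u-v)$ with $u,v\in B_{E}^{+}$, and so
\[
T(\rho B_{E})\subseteq T(B_{E}^{+})-T(B_{E}^{+}).
\]
Since $T$ is positive and linear, $T(B_{E}^{+})\subseteq Q$; moreover $T(B_{E}^{+})$ is bounded, say $T(B_{E}^{+})\subseteq r B_{X}$ for some $r>0$ (here one uses that a positive operator between ordered Banach spaces with $P$ generating, or more directly with $P$ giving an open decomposition, is automatically continuous — this is the Jameson-type automatic continuity result, but in any case $T$ is assumed linear and one may simply invoke continuity of $T$, which is standard once $P$ gives an open decomposition). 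Thus $T(B_{E}^{+})\subseteq Q\cap rB_{X}=r\,B_{X}^{+}$ in the notation $B_{X}^{+}=Q\cap B_{X}$.

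Now I would conclude separately in the two cases. If $Q$ is reflexive, then $B_{X}^{+}=Q\cap B_{X}$ is weakly compact, hence $r B_{X}^{+}$ is weakly compact and so is its (weakly compact, by Krein--Smulian) closed convex symmetric hull; in particular $rB_X^+ - rB_X^+$ is a weakly compact set containing $T(\rho B_E)$. Therefore $T(B_E)$ is relatively weakly compact, i.e. $T$ is weakly compact. If $Q$ is strongly reflexive, then $B_{X}^{+}$ is norm compact, so $rB_{X}^{+}$ is norm compact, and $rB_X^+ - rB_X^+$ is the continuous image of the compact set $rB_X^+\times rB_X^+$ under subtraction, hence norm compact; it contains $T(\rho B_E)$, so $T(B_E)$ is relatively norm compact, i.e. $T$ is compact.

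The only genuinely delicate point is the automatic boundedness/continuity of $T$: one needs $T(B_E^+)$ to be a bounded subset of $X$. This follows because $P$ gives an open decomposition of $E$, so $B_E \subseteq \frac1\rho(B_E^+ - B_E^+)$, and if $T(B_E^+)$ were unbounded then $T(B_E)$ would be unbounded, contradicting... — more precisely, the cleanest route is: a positive linear operator from an ordered Banach space in which the positive cone gives an open decomposition, into an ordered Banach space with a closed cone, is continuous (this is in Jameson, and is essentially the closed graph theorem combined with the decomposition). Granting that, $T$ is bounded, $T(B_E^+)$ is bounded, and the argument above closes. I would state this continuity fact explicitly at the start of the proof, cite \cite{Jameson}, and then the rest is the short two-case argument sketched above; no further obstacles arise.
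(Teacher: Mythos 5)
Your argument is correct and is essentially the proof in the paper: invoke the automatic continuity of positive operators (the paper cites Aliprantis--Tourky, Theorem 2.32, where you cite Jameson), note that $T(B_{E}^{+})$ lands in a bounded subset of $Q$, hence in a multiple of the weakly compact (resp.\ norm compact) set $B_{X}^{+}$, and then use the open decomposition $\rho B_{E}\subseteq B_{E}^{+}-B_{E}^{+}$ to cover $T(B_{E})$ by a difference of two weakly compact (resp.\ compact) sets. The only cosmetic remark is that $rB_{X}^{+}-rB_{X}^{+}$ is weakly compact directly as the image of $rB_{X}^{+}\times rB_{X}^{+}$ under subtraction, so the appeal to the Krein--Smulian hull is unnecessary.
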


\begin{proof} Suppose that $T:E\longrightarrow X$ is positive, linear
operator. By ~\cite{Aliprantis-Tourky}, Theorem 2.32, $T$ is
continuous. The set $T(B_{E}^{+})\subseteq Q$,   is convex and
bounded, therefore relatively weakly compact, because $Q$ is
reflexive. By our assumptions, $W=B_{E}^{+}-B_{E}^{+}$ is a
neighborhood of zero. The set $\overline{T(B_{E}^{+})}$ is weakly
compact, hence the set
$\overline{T(W)}\subseteq\overline{T(B_{E}^{+})}-\overline{T(B_{E}^{+})}$
is weakly compact and therefore $T$ is weakly compact. The case of
compact operator is analogous.  \end{proof} If $E$ is a Banach
lattice, then $E_{+}$ gives an open decomposition in $E$,
therefore by the above theorem we have: \begin{cor}\label{l2} Any
positive, linear operator from a Banach lattice $E$ into a
 Banach space $X$ ordered by a pointed, reflexive (respectively, strongly
reflexive)  cone is weakly compact (respectively,
compact).

\end{cor}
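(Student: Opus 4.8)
The plan is to obtain this statement as an immediate specialization of Theorem~\ref{l2}. The only point that needs checking is that the positive cone $E_{+}$ of a Banach lattice $E$ satisfies the hypotheses imposed on the domain cone in that theorem, namely that $E_{+}$ is closed, pointed, and gives an open decomposition of $E$.

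First I would recall that in any Banach lattice the positive cone $E_{+}$ is norm closed and pointed, the latter because the lattice order is antisymmetric. Next, for the open decomposition, I would invoke the standard decomposition $x=x^{+}-x^{-}$ valid for every $x\in E$, where $x^{+},x^{-}\in E_{+}$ and $|x|=x^{+}+x^{-}$; since the lattice norm is monotone and $\||x|\|=\|x\|$, this gives $\|x^{+}\|\le\|x\|$ and $\|x^{-}\|\le\|x\|$. Consequently $B_{E}\subseteq B_{E}^{+}-B_{E}^{+}$, so $E_{+}$ gives an open decomposition of $E$ with constant $\rho=1$. Applying Theorem~\ref{l2} with $P=E_{+}$ and $Q$ the given pointed reflexive (respectively strongly reflexive) cone of $X$ then yields at once that every positive linear operator $T\colon E\to X$ is weakly compact (respectively compact).

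There is essentially no obstacle here; the whole content is carried by Theorem~\ref{l2}, and the argument above is only the verification that a Banach lattice is a legitimate instance of its hypotheses. The one mild point worth noting is that $X$ is ordered by an abstract pointed reflexive cone $Q$, with no order-theoretic compatibility assumed between $E$ and $X$, so the weak compactness (respectively compactness) of $T$ is genuinely a consequence of the geometry of $Q$. As the authors observe elsewhere, combining this corollary with the concrete reflexive cones constructed in Theorem~\ref{thm:strongly-reflexive in Banach-lattice} and in Examples~\ref{exa:L_1-strongly reflexive} and \ref{exa:L_1-reflexive} then yields nontrivial classes of weakly compact and compact operators.
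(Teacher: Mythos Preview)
Your proposal is correct and follows exactly the paper's approach: the corollary is deduced from Theorem~\ref{l2} by noting that the positive cone $E_{+}$ of a Banach lattice gives an open decomposition of $E$. You supply more detail than the paper (which merely asserts the open decomposition), but the argument is the same.
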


\section{Reflexive cones and Schauder bases \label{sec:6}}

In this section we study the reflexivity of the positive cone of a
Schauder basis of $X$ in terms of the properties of the basis. Our
study has been inspired by the classical results of James in \cite{James}.
Suppose that $X$ is a Banach space with a Schauder basis $\{x_{n}\}$
and let $\{x_{n}^{*}\}$ be the sequence of the coefficient functionals
of $\{x_{n}\}$. Throughout the whole section we will denote by $P$,
the positive cone of the basis $\{x_{n}\}$, i.e. $P=\{x=\sum_{i=1}^{\infty}\lambda_{i}x_{i}\;|\;\lambda_{i}\geq0,\;\text{ for each}\; i\}$
and by $Y$ the closed subspace of $X^{*}$ generated by $\{x_{n}^{*}\}$. Note that $\{x_{n}^{*}\}$ is a basic sequence in $X^*$.
We will also denote by $Q=\{f=\sum_{i=1}^{\infty}\lambda_{i}x_{i}^{*}\;|\;\lambda_{i}\geq0,\;\text{ for each}\; i\}$,
the positive cone of the sequence $\{x_{n}^{*}\}$, by $Q^{0}$ the
dual cone of $Q$ in $Y^{*}$ and by $\Psi(x)$ the restriction of
$\widehat{x}$ to $Y$.

\emph{We say that $\{x_{n}\}$ is }\textbf{\emph{boundedly complete
on $P$}}\emph{ if for each sequence $\{a_{n}\}$ of nonnegative real
numbers $\sup_{n\in\mathbb{N}}\{||\sum_{i=1}^{n}a_{i}x_{i}||\}<+\infty$
implies that $\sum_{i=1}^{\infty}a_{i}x_{i}\in P$.} This notion has
been defined in \cite{PolyrakisXanthos2011} for the study of special
properties of cones. A simple example of a basis which is boundedly
complete on $P$ but not boundedly complete on the whole space $X$,
is the summing basis of $c_{0}$.

In the next proposition we give a characterization of Schauder bases
which are boundedly complete on the cone $P$.

\begin{prop}\label{00-1} $\{x_{n}\}$ is boundedly complete on $P$
if and only if $\Psi(P)=Q^{0}$. \end{prop}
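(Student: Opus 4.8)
The statement asserts the equality of two subsets of $Y^{*}$, and one inclusion holds unconditionally: if $x=\sum_{i}\lambda_{i}x_{i}\in P$ then $\Psi(x)(x_{n}^{*})=x_{n}^{*}(x)=\lambda_{n}\geq0$ for every $n$, and a continuous functional on $Y$ that is nonnegative on every $x_{n}^{*}$ is automatically nonnegative on every $f=\sum_{i}\mu_{i}x_{i}^{*}\in Q$ (apply it to the partial sums and pass to the limit). So the content of the proposition is that $Q^{0}\subseteq\Psi(P)$ is equivalent to bounded completeness on $P$. Throughout I will write $K$ for the basis constant of $\{x_{n}\}$, $S_{n}\colon X\to X$ for the partial sum projections (so $\|S_{n}\|\leq K$), and I will use that $S_{n}^{*}$ restricts on $Y$ to the partial sum projections of the basic sequence $\{x_{n}^{*}\}$, i.e. $R_{n}:=S_{n}^{*}|_{Y}$ sends $\sum_{i}\mu_{i}x_{i}^{*}$ to $\sum_{i=1}^{n}\mu_{i}x_{i}^{*}$ and $\|R_{n}\|\leq K$; note also that $\Psi(x_{i})$ is exactly the $i$-th coefficient functional of $\{x_{n}^{*}\}$ in $Y^{*}$, since $\Psi(x_{i})(x_{j}^{*})=\delta_{ij}$.

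The technical heart is the estimate $\|x\|\leq K\,\|\Psi(x)\|$ for every finitely supported $x=\sum_{i=1}^{n}a_{i}x_{i}$. Indeed $x=S_{n}x$, so for $x^{*}\in B_{X^{*}}$ we have $x^{*}(x)=x^{*}(S_{n}x)=(S_{n}^{*}x^{*})(x)=\Psi(x)(S_{n}^{*}x^{*})$ with $S_{n}^{*}x^{*}\in Y$ and $\|S_{n}^{*}x^{*}\|\leq K$; taking the supremum over $x^{*}$ gives the claim. Thus, although $Y$ need not norm $X$, it controls the norm of finitely supported vectors up to the factor $K$.

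For the implication ``$\{x_{n}\}$ boundedly complete on $P$ $\Rightarrow\Psi(P)=Q^{0}$'': given $\phi\in Q^{0}$, put $a_{n}=\phi(x_{n}^{*})\geq0$ and $s_{n}=\sum_{i=1}^{n}a_{i}x_{i}$. Using the identification of $\Psi(x_{i})$ with the coefficient functionals of $\{x_{n}^{*}\}$ one checks $\Psi(s_{n})=\phi\circ R_{n}$, whence $\|\Psi(s_{n})\|\leq K\|\phi\|$, and the estimate above gives $\|s_{n}\|\leq K^{2}\|\phi\|$ for all $n$. Bounded completeness on $P$ then forces $x:=\sum_{i=1}^{\infty}a_{i}x_{i}\in P$; since $\Psi(x)$ and $\phi$ agree on each $x_{n}^{*}$, they agree on $Y$, so $\phi=\Psi(x)\in\Psi(P)$. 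Conversely, assume $\Psi(P)=Q^{0}$ and let $\{a_{n}\}$ be nonnegative with $M:=\sup_{n}\|s_{n}\|<\infty$. Then $\{\Psi(s_{n})\}$ is bounded in $Y^{*}$ and $\Psi(s_{n})(x_{m}^{*})=a_{m}$ once $n\geq m$; since ${\rm span}\{x_{m}^{*}\}$ is dense in $Y$, this bounded sequence converges weak$^{*}$ to some $\phi\in Y^{*}$ with $\phi(x_{m}^{*})=a_{m}\geq0$, hence $\phi\in Q^{0}=\Psi(P)$. Writing $\phi=\Psi(x)$ with $x=\sum_{i}b_{i}x_{i}\in P$ and comparing coordinates, $b_{m}=x_{m}^{*}(x)=\Psi(x)(x_{m}^{*})=a_{m}$, so the series $\sum_{i}a_{i}x_{i}$ converges to $x\in P$, i.e. $\{x_{n}\}$ is boundedly complete on $P$.

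The main obstacle is precisely the norm estimate of the second paragraph: one must pass from boundedness of the images $\Psi(s_{n})$ in $Y^{*}$ back to boundedness of the partial sums $s_{n}$ in $X$, and this is exactly where the adjoint partial sum projections $S_{n}^{*}$ are indispensable. Once this is in place, both directions reduce to routine bookkeeping with the coefficient functionals and with weak$^{*}$ limits of bounded sequences in $Y^{*}$.
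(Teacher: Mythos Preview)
Your proof is correct and follows essentially the same route as the paper's: both directions hinge on the estimate $\|x\|\leq K\|\Psi(x)\|$ for finitely supported $x$, which the paper obtains by citing Megginson's Lemma~4.4.3 while you reprove it directly via the adjoint partial sum projections $S_{n}^{*}$. The only other cosmetic difference is that in the converse direction the paper extracts a weak$^{*}$-convergent subnet via Alaoglu, whereas you observe (using density of $\mathrm{span}\{x_{n}^{*}\}$ in $Y$ together with boundedness) that the full sequence $\{\Psi(s_{n})\}$ already converges weak$^{*}$.
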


\begin{proof} Suppose that $\{x_{n}\}$ is boundedly complete on
$P$. We shall show that for any $y^{**}\in Q^{0}$, $y^{**}=\Psi(x)$
for some $x\in P$.

First we will show that $x=\sum_{n=1}^{\infty}y^{**}(x_{n}^{*})x_{n}$
exists. This will imply that $y^{**}=\Psi(x)$ because $y^{**}(x_{i}^{*})=\widehat{x}(x_{i}^{*})$,
for each $i$.

For each $m\in\mathbb{N}$ and each $z^{*}=\sum_{n=1}^{\infty}a_{n}x_{n}^{*}\in Y$
we have
\[
|(\sum_{n=1}^{m}y^{**}(x_{n}^{*})\widehat{x_{n}})(z^{*})|=|y^{**}(\sum_{n=1}^{m}a_{n}x_{n}^{*})|\leq M^{*}||y^{**}||\,||z^{*}||,
\]
 where $M^{*}$ is the basis constant of the basic sequence $\{x_{n}^{*}\}$,
therefore
\[
||\sum_{n=1}^{m}y^{**}(x_{n}^{*})\widehat{x_{n}}||\leq M^{*}||y^{**}||
\]
 where $\sum_{n=1}^{m}y^{**}(x_{n}^{*})\widehat{x_{n}}$ is considered
as a functional of $Y$ and its norm is considered on $Y$. By ~\cite{Megginson},
Lemma 4.4.3, we have:
\[
||\sum_{n=1}^{m}y^{**}(x_{n}^{*})\widehat{x_{n}}||\leq MM^{*}||y^{**}||,
\]
 where $M$ is the basis constant of the basis $\{x_{n}\}$.

Therefore $||\sum_{n=1}^{m}y^{**}(x_{n}^{*})x_{n}||\leq MM^{*}||y^{**}||$
for each $m\in\mathbb{N}$, hence $x=\sum_{n=1}^{\infty}y^{**}(x_{n}^{*})x_{n}\in P$
because $\{x_{n}\}$ is boundedly complete on $P$. Therefore, $\Psi(x)=y^{**}$
and $\Psi(P)=Q^{0}$.

For the converse suppose that $\Psi(P)=Q^{0}$ and suppose also that
$\{a_{n}\}$ is a sequence of nonnegative real numbers with $||\sum_{i=1}^{n}a_{i}x_{i}||\leq M$
for each $n\in\mathbb{N}$. Then $s_{n}=\sum_{i=1}^{n}a_{i}\Psi(x_{i})$
is also bounded in $Y^{*}$. By Alaoglou's Theorem, there exists a
subnet $\{s_{n_{a}}\}_{a\in A}$ of $\{s_{n}\}$ such that
\[
s_{n_{a}}\xrightarrow{{w^{*}}}y^{*}\in Q^{0}=\Psi(P).
\]
 Therefore $y^{*}=\sum_{i=1}^{\infty}y^{*}(x_{i}^{*})\Psi(x_{i})$,
in the weak-star topology of $Y^{*}$. Since $\Psi(P)=Q^{0}$, there
exists $x\in P$ so that we have that $y^{*}=\Psi(x)$. Since $\{x_{n}\}$
is a basis of $X$ we have that $x=\sum_{i=1}^{\infty}x_{i}^{*}(x)x_{i}$.
For any $i$ we have
\[
x_{i}^{*}(x)=\Psi(x)(x_{i}^{*})=y^{*}(x_{i}^{*})=\lim s_{n_{a}}(x_{i}^{*})=a_{i},\;\text{ for each}\; i.
\]
 Hence $x=\sum_{i=1}^{\infty}a_{i}x_{i}$ and $\{x_{n}\}$ is boundedly
complete on $P$.  \end{proof}

The next results show the link between reflexivity of $P$ and the
boundedly completeness of the basis $\{x_{n}\}$ on the cone $P$.
\begin{thm}\label{theorem0} If  the positive cone $P$ of the
Schauder basis $\{x_{n}\}$ of the Banach space $X$ is reflexive,
then $\{x_{n}\}$ is boundedly complete on $P$. \end{thm}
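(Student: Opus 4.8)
The strategy is to argue by contraposition: suppose $\{x_n\}$ is \emph{not} boundedly complete on $P$, and deduce that $P$ is not reflexive, i.e.\ that $B_X^+ = B_X \cap P$ fails to be weakly compact. By the definition of bounded completeness on $P$, the negation gives a sequence $\{a_n\}$ of nonnegative reals with $M := \sup_{n}\|\sum_{i=1}^{n} a_i x_i\| < +\infty$ but $\sum_{i=1}^\infty a_i x_i \notin P$. Set $s_n = \sum_{i=1}^{n} a_i x_i$; this is an increasing (in the order induced by $P$) sequence lying in $P$, and it is bounded in norm by $M$, hence $\{s_n\} \subseteq M\, B_X^+$ (after rescaling, $\{s_n/M\} \subseteq B_X^+$).

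The key step is to show $\{s_n\}$ has no weakly convergent subsequence with limit in $P$, which will contradict weak compactness of $B_X^+$. First I would observe that \emph{if} some subnet (or subsequence) of $\{s_n\}$ converged weakly to a point $x \in X$, then applying the coefficient functionals $x_k^*$ gives $x_k^*(x) = \lim_n x_k^*(s_n) = a_k$ for every $k$, since $x_k^*(s_n) = a_k$ for all $n \ge k$. A weakly convergent sequence in a space with a Schauder basis has its coordinates determined this way, and because $\{x_n\}$ is a basis, the only candidate for the weak limit is the formal sum $\sum_{i=1}^\infty a_i x_i$ — but this series need not even converge in $X$, and by hypothesis its ``value'' is not in $P$. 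More carefully: weak compactness of $B_X^+$ would force a weakly convergent subsequence $s_{n_j} \overset{w}{\to} x$ with $x \in \overline{B_X^+}^{\,w} \subseteq P$ (here using that $P$ is reflexive, hence weakly closed, and $B_X^+$ weakly compact). Then $x \in P$ and $x_i^*(x) = a_i$ for all $i$, so $x = \sum_{i=1}^\infty x_i^*(x) x_i = \sum_{i=1}^\infty a_i x_i$, contradicting $\sum a_i x_i \notin P$.

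The main obstacle — and the place to be careful — is justifying that the weak limit $x$ really satisfies $x = \sum a_i x_i$ in the norm sense and that this contradicts the hypothesis: one must invoke that in a space with a Schauder basis, $x = \sum_i x_i^*(x)\,x_i$ holds for \emph{every} $x\in X$ (this is just the basis expansion of $x$), so once we know $x_i^*(x) = a_i$ for all $i$, the series $\sum a_i x_i$ converges to $x$, and in particular $\sum a_i x_i \in P$ since $x \in P$ — which is precisely the conclusion that bounded completeness on $P$ demanded and that we assumed fails. I would also want to note explicitly that $\{s_n\} \subseteq \rho\, B_X^+$ for $\rho = \max\{1, M\}$ so that a weakly convergent subsequence exists by weak compactness of $\rho\, B_X^+ = \rho\,(B_X \cap P)$ (a rescaling of $B_X^+$), and that its limit lies in $\rho\, B_X^+ \subseteq P$ because $B_X^+$ is weakly closed (being weakly compact in a Hausdorff topology). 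This completes the contrapositive and hence the theorem.
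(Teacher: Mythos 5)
Your proposal is correct and is essentially the paper's own argument, merely phrased contrapositively: in both cases the bounded partial sums $s_n$ lie in a dilate of $B_X^+$, weak compactness (via Eberlein--\v{S}mulian) yields a weakly convergent subsequence with limit $x\in P$, and the coefficient functionals identify $x_i^*(x)=a_i$, so $x=\sum_i a_i x_i\in P$. The extra care you take about the weak limit lying in $P$ (weak closedness of $B_X^+$) is a point the paper leaves implicit, but it changes nothing of substance.
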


\begin{proof} Suppose that $\{a_{n}\}$ is a sequence of nonnegative
real numbers and suppose that $s_{n}=\sum_{i=1}^{n}a_{i}x_{i}$
such that $||s_{n}||\leq M$ for each $n\in\mathbb{N}$. Since the
cone $P$ is reflexive, there exists a subsequence $\{s_{k_{n}}\}$
such that $s_{k_{n}}\xrightarrow{{w}}x$, where
$x=\sum_{i=1}^{\infty}x_{i}^{*}(x)x_{i}$. Then
$x_{i}^{*}(x)=\lim_{n\rightarrow\infty}x_{i}^{*}(s_{k_{n}})=a_{i}$,
for each $i$, therefore $x=\sum_{i=1}^{\infty}a_{i}x_{i}\in P$ and
$\{x_{n}\}$ is boundedly complete on $P$.  \end{proof}

Recall that the basis $\{x_{n}\}$ of $X$ is \textbf{shrinking} if
$\{x_{n}^{*}\}$ is a basis of $X^{*}$. It is known,~\cite{James},
that \textit{a Banach space $X$ with a Schauder basis $\{x_{n}\}$
is reflexive if and only if $\{x_{n}\}$ is shrinking and boundedly
complete (on~$X$).} In the next theorem, under the weaker assumption
that $\{x_{n}\}$ is boundedly complete on~$P$, we prove that the
cone $P$ is reflexive. The assumption that the shrinking basis $\{x_{n}\}$
is boundedly complete only on the cone $P$, is not enough to ensure
the reflexivity of the whole space $X$, see Example ~\ref{Jam}
below.

\begin{thm}\label{theorem1} If the Schauder basis $\{x_{n}\}$ of
the Banach space $X$ is shrinking and boundedly complete on~$P$,
then $P$ is reflexive. \end{thm}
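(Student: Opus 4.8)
The plan is to show that $B_X^+ = B_X \cap P$ is weakly compact by producing, for an arbitrary sequence in $B_X^+$, a weakly convergent subsequence with limit in $P$. Since the basis $\{x_n\}$ is shrinking, $\{x_n^*\}$ is a basis of $X^*$, so $Y = \overline{\operatorname{span}}\{x_n^*\} = X^*$ and the map $\Psi$ sending $x$ to the restriction of $\widehat x$ to $Y$ is just $J_X$ itself; moreover $Q = \{\sum \lambda_i x_i^* : \lambda_i \geq 0\}$ is the positive cone of a Schauder basis of $X^*$, and one checks that its dual cone $Q^0 \subseteq X^{**}$ is exactly $P^{00}$ (a functional $x^{**} \in X^{**}$ is in $Q^0$ iff $x^{**}(x_i^*) \geq 0$ for all $i$, which says precisely that $x^{**}$ lies in the $w^*$-closure of $\widehat P$). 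By Proposition~\ref{00-1}, the hypothesis that $\{x_n\}$ is boundedly complete on $P$ gives $\Psi(P) = Q^0$, i.e. $\widehat P = P^{00}$. By Theorem~\ref{thm:P=00003D00003D00003DP00}, this equality $\widehat P = P^{00}$ is equivalent to $P$ being reflexive, and we are done.

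First I would spell out that shrinking-ness makes $Y = X^*$ and hence $\Psi = J_X$, so that Proposition~\ref{00-1} reads literally as ``$\{x_n\}$ boundedly complete on $P$ $\iff$ $\widehat P = Q^0$''. Next I would verify the identification $Q^0 = P^{00}$: the inclusion $\widehat P \subseteq P^{00}$ is automatic and $P^{00}$ is the $w^*$-closure of $\widehat P$ (this was noted in the proof of Theorem~\ref{thm:P=00003D00003D00003DP00}); since evaluation against each $x_i^* = $ (the $i$-th basis functional) is $w^*$-continuous and nonnegative on $\widehat P$, we get $P^{00} \subseteq Q^0$, and conversely any $x^{**} \in Q^0$ satisfies $x^{**}(x^*) \geq 0$ for every $x^* \in Q$, hence for every $x^*$ in the norm-closed cone generated by $Q$, which contains $P^0$ — so $x^{**} \in P^{00}$. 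Thus $Q^0 = P^{00}$. Combining with the previous paragraph, $\{x_n\}$ boundedly complete on $P$ forces $\widehat P = P^{00}$, and Theorem~\ref{thm:P=00003D00003D00003DP00} converts this into reflexivity of $P$.

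An alternative, more hands-on route avoids invoking Theorem~\ref{thm:P=00003D00003D00003DP00}: take $\{z_n\} \subseteq B_X^+$, and using that $\{x_n\}$ is shrinking pass (by the standard argument, e.g.\ via $w^*$-compactness of $B_{X^{**}}$ and the fact that shrinking bases make weak and coordinatewise convergence on bounded sets coincide) to a subsequence that converges weakly in $X^{**}$ to some $z^{**} \in B_{X^{**}}$; its coordinates $a_i = z^{**}(x_i^*) = \lim (z_{n})(x_i^*)$ are nonnegative, and the partial sums $\sum_{i=1}^n a_i x_i$ are uniformly bounded (by the basis constant times $\sup\|z_n\|$), so bounded completeness on $P$ yields $z = \sum a_i x_i \in P$ with $\widehat z = z^{**}$; hence $z_n \to z$ weakly in $X$ and $z \in B_X^+$. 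I expect the main obstacle to be the clean justification that a bounded, coordinatewise-convergent sequence under a shrinking basis converges weakly — this is exactly where shrinking-ness is used and is the one non-formal ingredient — together with the bookkeeping identifying $Q^0$ with $P^{00}$ so that Proposition~\ref{00-1} and Theorem~\ref{thm:P=00003D00003D00003DP00} can be chained; everything else is routine.
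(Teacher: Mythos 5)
Your proposal is correct, and its end-game coincides with the paper's: establish $\widehat{P}=P^{00}$ and invoke Theorem \ref{thm:P=00003D00003D00003DP00}. The difference lies in how the inclusion $P^{00}\subseteq\widehat{P}$ is obtained. The paper argues directly: for $x^{**}\in P^{00}$, shrinking-ness gives $s_{n}=\sum_{i=1}^{n}x^{**}(x_{i}^{*})\widehat{x_{i}}\xrightarrow{w^{*}}x^{**}$, so $\{s_{n}\}$ is bounded, the coefficients $x^{**}(x_{i}^{*})$ are nonnegative, and bounded completeness on $P$ plus uniqueness of the $w^{*}$-limit yield $x^{**}\in\widehat{P}$. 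You instead chain Proposition \ref{00-1} with the identifications forced by shrinking-ness: $Y=X^{*}$, $\Psi=J_{X}$, and $P^{0}=Q$ (any $x^{*}\in P^{0}$ has nonnegative coordinates $x^{*}(x_{i})$ and expands in the basis $\{x_{i}^{*}\}$ of $X^{*}$ --- this is the justification you should make explicit for your claim that the closed cone generated by $Q$ contains $P^{0}$), hence $Q^{0}=P^{00}$ and Proposition \ref{00-1} reads $\widehat{P}=P^{00}$. Your organization exposes the theorem as a corollary of Proposition \ref{00-1}, which the paper proves but does not actually cite here, at the cost of the $P^{0}=Q$ bookkeeping; the paper's direct argument is self-contained and a bit shorter. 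Your ``hands-on'' alternative is also sound and bypasses Theorem \ref{thm:P=00003D00003D00003DP00} entirely, provided you add the diagonal extraction for coordinatewise convergence and the Eberlein--Smulian theorem to pass from weak sequential compactness of $B_{X}^{+}$ to weak compactness.
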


\begin{proof} We will show that $\widehat{P}=P^{00}$. For any $x^{**}\in P^{00}$
we have that $x^{**}=\sum_{i=1}^{\infty}x^{**}(x_{i}^{*})\widehat{x_{i}}$,
in the weak-star topology of $X^{**}$, because the basis $\{x_{n}\}$
is shrinking. Therefore we have
\[
s_{n}=\sum_{i=1}^{n}x^{**}(x_{i}^{*})\widehat{x_{i}}\xrightarrow{{w^{*}}}x^{**}.
\]
 Hence there exists $M>0$, such that $||s_{n}||=||\sum_{i=1}^{n}x^{**}(x_{i}^{*})x_{i}||\leq M$,
for each $n$. Also $x^{**}(x_{i}^{*})\geq0$ for each $i$, because
$x^{**}\in P^{00}$, therefore
\[
x=\sum_{i=1}^{\infty}x^{**}(x_{i}^{*})x_{i}\in P,
\]
 because $\{x_{n}\}$ is boundedly complete on $P$ and by the uniqueness
of the weak-star limit we have $x^{**}=\widehat{x}\in\widehat{P}$,
therefore $P$ is reflexive.  \end{proof}

We end this section with an example of a nonreflexive space $X$ with
a shrinking Schauder basis which is boundedly complete on its positive
cone $P$ but not boundedly complete on the whole space $X$.

\begin{example1}\label{Jam} Let $J$ be the James space, i.e. the
space of all real sequences $\left\{ \alpha_{n}\right\} $ such that
$\lim\alpha_{n}=0$ and
\[
\sup\left\{ \sum_{j=1}^{n-1}\left(\alpha_{k_{j}}-\alpha_{k_{j+1}}\right)^{2}\right\} ^{1/2}<+\infty
\]
 where the supremum is taken over all $n\in\mathbb{N}$ and all finite
increasing sequences $k_{1}<k_{2}<\cdots<k_{n}$ in $\mathbb{N}$.
The norm $\left\Vert \left\{ \alpha_{n}\right\} \right\Vert _{J}$
is defined to be this supremum. It is well known that the standard
unit vector $\left\{ e_{n}\right\} $ in $J$ is a monotone shrinking
basis (see \cite{Fetter-Gamboa de Buen} for a general reference of
James space). Of course such a basis is not boundedly complete since
$J$ is not a reflexive space. It is easy to check that $\left\{ e_{n}\right\} $
is not boundedly complete on its positive cone.

Also by the formula $(a-b)^{2}\leq2a^{2}+2b^{2}$, for any $a,b\in\mathbb{R}$
we have that any real sequence $\{a_{i}\}$ of $\ell_{2}$ belongs
to $J$ with
\[
||\{a_{i}\}||_{J}\leq\big(2\sum_{i=1}^{\infty}a_{i}^{2}\big)^{\frac{1}{2}}.
\]

Let us consider the sequence $x_{n}=(-1)^{n+1}e_{n}$. It is clear
that $\left\{ x_{n}\right\} $ is again a shrinking basis of $J$.
We will show that $\left\{ x_{n}\right\} $ is boundedly complete
on its positive cone $P=\{x=\sum_{i=1}^{\infty}\lambda_{i}x_{i}\;|\;\lambda_{i}\geq0,\;\text{ for each}\; i\}$.

So we suppose that $\{\lambda_{n}\}$ is a sequence of nonnegative
real numbers so that $\sup_{n}\left\Vert \sum_{i=1}^{n}\lambda_{i}x_{i}\right\Vert _{J}\leq M$.

Suppose that $a_{i}=\lambda_{i}$ if $i$ is odd and $a_{i}=-\lambda_{i}$
if $i$ is even.

Then $\sup_{n}\left\Vert \sum_{j=1}^{n}\alpha_{j}e_{j}\right\Vert _{J}\leq M$.
Also
\[
(a_{k}-a_{k+1})^{2}=(|a_{k}|+|a_{k+1}|)^{2}\geq a_{k}^{2},
\]
 if $k$ is odd and

\[
(a_{k}-a_{k+1})^{2}=(-|a_{k}|-|a_{k+1}|)^{2}\geq a_{k}^{2},
\]
 if $k$ is even, therefore

\[
0\leq\sum_{k=1}^{n}a_{k}^{2}\leq\sum_{k=1}^{n}(a_{k}-a_{k+1})^{2}\leq M^{2}.
\]

Therefore the sequence $\left\{ \alpha_{n}\right\} \in\ell_{2}$, hence

\[
\sum_{i=1}^{\infty}\alpha_{i}e_{i}=\sum_{i=1}^{\infty}\lambda_{i}x_{i},
\]
 exists in $J$ and the basis $\{x_{n}\}$ is boundedly complete on
$P$. By Theorem \ref{theorem1}, we have that the cone $P$ is
reflexive. Finally note that $J=\overline{P-P}$.
\end{example1}

The previous example together with Theorem \ref{thm:unconditional basis-reflexive subspace}
suggest the following unanswered question.
\begin{problem1}
 If $P$ is a reflexive cone of a Banach space $X$ with an unbounded
base defined by a vector of $X^{*}$, does the subspace $\overline{P-P}$
contain a reflexive subspace?
\end{problem1}

\section{Spaces ordered by reflexive  cones\label{sec:7}}

We start this section by  proving   that $X$, ordered by a normal and reflexive cone is \textbf{Dedekind
complete}, i.e. any increasing net of $X$, bounded from above, has
a supremum. Recall that  a Dedekind complete ordered space is
not necessarily a lattice.

\begin{thm}\label{Ded} Any Banach space $X$, ordered by a reflexive
and normal cone $P$, is Dedekind complete. \end{thm}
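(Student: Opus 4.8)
The plan is to show that every increasing net in $X$ that is bounded above has a supremum, using the reflexivity of $P$ to extract a weak cluster point and the normality of $P$ to control the net geometrically. First I would reduce to a clean situation: let $\{x_\alpha\}_{\alpha\in A}$ be an increasing net with $x_\alpha\le u$ for all $\alpha$ and some upper bound $u$; by translating we may assume $x_{\alpha_0}=0$ for a fixed index $\alpha_0$, so that $0\le x_\alpha\le u$ for all $\alpha\ge\alpha_0$, and we only need to handle the tail $\{\alpha\ge\alpha_0\}$. Then $0\le x_\alpha\le u$ together with normality of $P$ gives $\|x_\alpha\|\le c\|u\|$, so the net is norm bounded; choosing $\rho>0$ with $c\|u\|\le\rho$, the whole net lives in $\rho B_X^+=\rho(B_X\cap P)$, which is weakly compact because $P$ is reflexive.

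Next I would use weak compactness to produce a candidate supremum. Since $\rho B_X^+$ is weakly compact, the net $\{x_\alpha\}$ has a weak cluster point $x\in\rho B_X^+\subseteq P$; pass to a subnet $\{x_{\beta}\}$ with $x_\beta\xrightarrow{w}x$. The key claim is that $x=\sup_\alpha x_\alpha$. To see that $x$ is an upper bound: fix $\gamma$; for $\beta$ large enough (cofinally) we have $\beta\ge\gamma$, hence $x_\beta-x_\gamma\in P$; since $P$ is weakly closed (a reflexive cone is closed, hence weakly closed being convex) and $x_\beta-x_\gamma\xrightarrow{w}x-x_\gamma$ along that subnet, we get $x-x_\gamma\in P$, i.e. $x_\gamma\le x$. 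To see that $x$ is the least upper bound: let $v$ be any upper bound of $\{x_\alpha\}$, so $v-x_\beta\in P$ for all $\beta$; again $P$ weakly closed and $v-x_\beta\xrightarrow{w}v-x$ give $v-x\in P$, i.e. $x\le v$. Therefore $x=\sup_\alpha x_\alpha$ exists, and since $P$ is normal it is pointed, so the supremum is unique. Undoing the initial translation, the original net has a supremum as well, and $X$ is Dedekind complete.

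The step I expect to be the main (though modest) obstacle is the careful handling of nets rather than sequences: one must make sure that passing to a weakly convergent subnet does not destroy the cofinality needed to conclude "$x_\beta - x_\gamma \in P$ cofinally," and that the cluster-point argument is phrased correctly for nets. Everything else is routine: normality $\Rightarrow$ norm boundedness of order-bounded sets, reflexivity of $P$ $\Rightarrow$ weak compactness of $\rho B_X^+$, and closedness (hence weak closedness by convexity) of $P$ to pass limits through the order relation. I would also remark, as the statement preceding the theorem already hints, that Dedekind completeness here does not upgrade to a lattice structure, so no further argument in that direction is attempted.
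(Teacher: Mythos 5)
Your proof is correct, but it follows a genuinely different route from the paper's. The paper constructs the candidate supremum through duality: using normality twice (once to norm-bound the net via $0\leq x_{a}-x_{a_{0}}\leq x-x_{a_{0}}$, and once to get that $P^{0}$ is generating in $X^{*}$), it defines $f(x^{*})=\lim_{a} x^{*}(x_{a}-x_{a_{0}})$ for $x^{*}\in P^{0}$, extends $f$ linearly to $X^{*}$, checks $f\in P^{00}$, and then invokes the identification $\widehat{P}=P^{00}$ (Theorem \ref{thm:P=00003D00003D00003DP00}) to produce $y\in P$ with $x_{a}-x_{a_{0}}\xrightarrow{w}y$, finally verifying $y$ is the supremum via the bipolar $(P^{0})_{0}=P$. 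You instead work directly with the definition of reflexivity: normality is used only to place the (translated) net inside $\rho B_{X}^{+}$, weak compactness yields a weakly convergent subnet, and weak closedness of $P$ (closed and convex) lets you pass the order relations to the limit on both sides, identifying the limit as the supremum. Your version is more elementary -- it avoids the extension of $f$ and the use of the decomposition $X^{*}=P^{0}-P^{0}$, hence uses normality only for the norm bound -- at the modest cost of the subnet bookkeeping, which you handle correctly (for a subnet $\beta\mapsto\varphi(\beta)$, for each fixed $\gamma$ one has $\varphi(\beta)\geq\gamma$ eventually, so $x_{\varphi(\beta)}-x_{\gamma}\in P$ eventually and the limit $x-x_{\gamma}$ lies in $P$); the paper's functional-analytic construction, by contrast, produces the weak limit of the whole net directly and showcases the bipolar characterization of reflexive cones. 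Both proofs share the initial reduction to the tail $\{a\geq a_{0}\}$ and the translation by $x_{a_{0}}$.
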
 \begin{proof}
Suppose that $(x_{a})_{a\in A}$ is a increasing net of $X$, such
that $x_{a}\leq x$ for each $a\in A$. It is enough to show that
$\sup\{x_{a}\,\,|\,\, a\geq a_{0}\}$, for some fixed $a_{0}$,
exists. For each $a\geq a_{0}$ we have that $0\leq
x_{a}-x_{a_{0}}\leq x-x_{a_{0}}$, hence $||x_{a}-x_{a_{0}}||\leq
M$ for each $a\geq a_{0}$, because $P$ is normal. For any
$x^{*}\in P^{0}$, $x^{*}(x_{a}-x_{a_{0}})$ is increasing and
bounded by $M||x^{*}||$, therefore
$\lim_{a}x^{*}(x_{a}-x_{a_{0}})$ exists and we denote this limit
by $f(x^{*})$, i.e. $f(x^{*})=\lim x^{*}(x_{a}-x_{a_{0}})$. Note
that the cone $P^{0}$ is generating in $X^{*}$ because $P$ is
normal, ~\cite{Aliprantis-Tourky}, Theorem 2.26, therefore any
$x^{*}\in X^{*}$, has a decomposition of the form
$x^{*}=x_{1}^{*}-x_{2}^{*}$ and we define
$f(x^{*})=f(x_{1}^{*})-f(x_{2}^{*})$. It is easy to show that $f$
is well defined and that $f(x^{*})=\lim x^{*}(x_{a}-x_{a_{0}})$.
Then $f$ is a continuous linear functional of $X^{*}$ and $f\in
P^{00}$. By the reflexivity of $P$ we have that
$\widehat{P}=P^{00}$, therefore there exists $y\in P$ such that
$f(x^{*})=x^{*}(y)$ for any $x^{*}\in X^{*}$. It is easy to see
that $x_{a}-x_{a_{0}}\xrightarrow{{w}}y$. We shall show that
$\sup_{a\geq a_{0}}(x_{a}-x_{a_{0}})=y$. Indeed, for any $x^{*}\in
P^{0}$ we have that $x^{*}(x_{a}-x_{a_{0}})\leq x^{*}(y)$. Since
the cone $P$ is closed we have that $P$ coincides with the dual
cone $(P^{0})_{0}=\{x\in X\;|\; x^{*}(x)\geq0,\text{for any}\;
x^{*}\in P^{0}\}$, of $P^{0}$ in $X$, therefore we have that
$x_{a}-x_{a_{0}}\leq y$. For any $w\geq x_{a}-x_{a_{0}}$ we have
$x^{*}(x_{a}-x_{a_{0}})\leq x^{*}(w)$, therefore $x^{*}(y)\leq
x^{*}(w)$, for any $x^{*}\in P^{0}$, hence $y\leq w$ and the
theorem is true.  \end{proof}

In the following we study the lattice property of Banach spaces ordered by reflexive cones. So we will suppose that our cones are pointed (proper). Note that any cone with a base and also any normal cone of $X$  are pointed.
We start by some notations.

It is known, see in \cite{Aliprantis-Tourky}, Corollary 2.48, that
{\it any reflexive space $X$ ordered by a  normal and  generating
cone $P$ has the Riesz decomposition property (i.e. for each
$x,y,z\in P$ we have:
 $x\leq y+z\Longrightarrow x=x_{1}+x_{2}$, where  $x_{1},x_{2}\in P$ with
  $x_{1}\leq y,\, x_{2}\leq z$) if and only if $X$ is a lattice.}

 Since any space that contains a generating, reflexive cone is reflexive,
the following result is an immediate translation of the above result  in the language  of reflexive cones.
\begin{thm} \label{thm:normal generating reflexive  cone and lattice}
 A Banach space $X$ ordered by a normal, generating and reflexive cone
$P$ has the Riesz decomposition property if and only if $X$ is a
lattice. \end{thm}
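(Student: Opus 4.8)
The plan is to reduce the statement to the classical fact quoted immediately before it, namely Corollary~2.48 of \cite{Aliprantis-Tourky}: a \emph{reflexive} Banach space ordered by a normal and generating cone has the Riesz decomposition property if and only if it is a lattice. The only discrepancy between that result and the present one is that here we do not assume reflexivity of the space $X$ itself, but only reflexivity of the ordering cone $P$. So the whole proof consists in closing this gap.

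First I would observe that the hypotheses already force $X$ to be reflexive. Indeed, $P$ is reflexive and generating, so by Remark~\ref{rem:(i)-(ii) basic properties of reflexive cones}(2) (which in turn rests on \cite{Jameson}, Theorem~3.5.2: a closed generating cone in a Banach space gives an open decomposition, whence $B_X \subseteq \rho(B_X^+ - B_X^+)$ is weakly compact) the space $X$ is reflexive. Note also that $P$ is pointed, as remarked in the paragraph preceding the statement, since every normal cone is pointed; this guarantees that the order $\leq_P$ is a genuine partial order and that the lattice/decomposition statements are meaningful. Thus the triple "normal, generating and reflexive cone $P$" is exactly the situation "reflexive Banach space ordered by a normal and generating cone."

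It then remains only to apply \cite{Aliprantis-Tourky}, Corollary~2.48, verbatim to $X$ with the ordering induced by $P$, which yields that $X$ has the Riesz decomposition property if and only if $X$ is a lattice. This is precisely the asserted equivalence, so the proof is complete. Since the argument is essentially a one-line translation, there is no genuine obstacle; the only point that deserves to be stated explicitly is the implication "reflexive cone $+$ generating $\Rightarrow$ reflexive space," which is recorded in the cited remark and is what makes the classical theorem applicable in this more general-looking formulation.
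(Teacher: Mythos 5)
Your proposal is correct and follows exactly the paper's own route: the paper likewise observes that a generating reflexive cone forces $X$ to be reflexive (Remark \ref{rem:(i)-(ii) basic properties of reflexive cones}, via \cite{Jameson}, Theorem 3.5.2) and then treats the statement as an immediate translation of \cite{Aliprantis-Tourky}, Corollary 2.48. Nothing further is needed.
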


Let $Y$ be  a normed space  ordered by the pointed cone $P$.
 A point $x_{0}\in P\backslash\{0\}$
is an \textbf{extremal point} of $P$  if for any
 $x\in X$ with $0\leq x\leq x_{0}$ implies $x=\lambda x_{0}$. If $Y$ is
a vector lattice and there exists a real constant $a>0$ so that $x,y\in Y,|x|\leq|y|$
implies $||x||\leq a||y||$, then $Y$ is a \textbf{locally solid
vector lattice }.  We continue by the notions of the continuous projection property. This
property was defined  in \cite{Polyrakis1988} for the study of the
extreme structure of the bases for cones.  We say that an
extremal point $x_{0}$ of $P$ \textbf{has (admits) a positive projection}
if there exists a linear, continuous, positive projection $P_{x_{0}}$
of $Y$ onto the one dimensional subspace $[x_{0}]$ generated by
$x_{0}$, so that $P_{x_{0}}(x)\leq x$ for any $x\in P$. We say that
$Y$ has the \textbf{continuous projection property} if it holds:
$x_{0}\in Ep(P)$ implies that $x_{0}$ admits a positive projection.
Also in \cite{Polyrakis1988} it is proved that if $Z=P-P$ and the
cone $P$ is closed, we have:

\textit{$(i)$ $Y$ has the continuous projection property if and
only if $Z$, ordered by the cone $P$, has the continuous projection
property and $(ii)$ if $Z$ is a locally solid vector lattice then
$Z$ and therefore also $Y$, have the continuous projection property.} So the continuous projection
property depends on the cone $P$, therefore we can also say that
the cone $P$ has the continuous projection property.

As it is noted in \cite{Polyrakis1988}, if $Y$ is a Banach space,
$P$ is pointed, closed and generating and $Y$ has the Riesz decomposition
property, then $Y$ has the continuous projection property, therefore
\textit{in a Banach space, ordered by a pointed, closed and generating cone,
the continuous projection property is weaker than the Riesz decomposition
and also than the lattice property}. \\

Note that in Example 4.1, of \cite{Polyrakis1988},  a reflexive,
generating and normal cone $P$ of a  reflexive space $X$ with a
bounded base which fails the Riesz decomposition property, is given.

Recall also Theorem 4.1 of \cite{Polyrakis1988} which is basic for
the study of the lattice propery of the reflexive cones: \textit{Let
$P$ be an infinite-dimensional closed  cone of a Banach space $X$.
If $P$ has the continuous projection property and $P$ has a closed,
bounded base $B$ with the Krein-Milman property, then $P$ is
isomorphic to the positive cone $\ell_{1}^{+}(\Gamma)$ of some
$\ell_{1}(\Gamma)$ space.}

\begin{thm}\label{latcone} If $P$ is a reflexive cone of a Banach
space $X$ with a closed, bounded base $B$, then $P$ does not contain
an infinite dimensional, closed cone $K$ with the continuous projection
property. \end{thm}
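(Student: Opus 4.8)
The plan is to argue by contradiction: assume $K\subseteq P$ is an infinite-dimensional closed cone with the continuous projection property, and derive that $K$ (hence $P$) contains a closed cone isomorphic to $\ell_1^+$, contradicting the reflexivity of $P$ via Theorem~\ref{thm:reflexive not contain l1+}.

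The first step is to note that $K$ is itself a reflexive cone: being convex and norm-closed it is weakly closed, so $B_X\cap K=(B_X\cap P)\cap K$ is a weakly closed subset of the weakly compact set $B_X^+$, hence weakly compact. The second step is to equip $K$ with a closed, bounded base having the Krein--Milman property. Writing $B=\{x\in P:f(x)=1\}$ for the strictly positive (possibly discontinuous) functional $f$ determining the base of $P$, the set $B_K:=B\cap K=\{x\in K:f(x)=1\}$ is a base for $K$; it is closed because $B$ and $K$ are, and bounded because $B$ is. Since $B_K$ is bounded, $B_K\subseteq\rho B_X$ for some $\rho>0$, and since $B_K\subseteq K$ with $K$ a cone, in fact $B_K\subseteq \rho B_X\cap K=\rho(B_X\cap K)$, which is weakly compact by reflexivity of $K$. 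Thus $B_K$, a norm-closed convex (hence weakly closed) subset of a weakly compact set, is weakly compact; applying the Krein--Milman theorem in the weak topology (together with Mazur's theorem to pass back to norm-closed convex hulls) to $B_K$ and to each of its closed bounded convex subsets shows that $B_K$ has the Krein--Milman property.

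With these ingredients in hand, $K$ is an infinite-dimensional closed cone enjoying the continuous projection property and possessing a closed bounded base with the Krein--Milman property, so Theorem~4.1 of~\cite{Polyrakis1988} yields an isomorphism of $K$ onto the positive cone $\ell_1^+(\Gamma)$ of some space $\ell_1(\Gamma)$. Because $K$ is infinite-dimensional (its span $K-K$ is infinite-dimensional, and this span is in linear bijection with $\ell_1(\Gamma)$), the index set $\Gamma$ is infinite; fixing a countably infinite $\Gamma_0\subseteq\Gamma$, the closed subcone of $\ell_1^+(\Gamma)$ supported on $\Gamma_0$ is isometrically isomorphic to $\ell_1^+$. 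Pulling this back through the isomorphism produces a cone inside $K$, closed in $K$ and therefore closed in $X$, which is isomorphic to $\ell_1^+$. Hence $P\supseteq K$ contains a closed cone isomorphic to $\ell_1^+$, contradicting Theorem~\ref{thm:reflexive not contain l1+}. (Alternatively, one can invoke the corollary of Theorem~\ref{thm:reflexive not contain l1+}: since $K$ is reflexive and isomorphic to $\ell_1^+(\Gamma)$, the latter would be reflexive, but $\ell_1^+(\Gamma)$ is a closed generating cone of the non-reflexive space $\ell_1(\Gamma)$, contradicting Remark~\ref{rem:(i)-(ii) basic properties of reflexive cones}.)

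The main obstacle is the second step --- verifying that the base $B_K$ inherited by $K$ is weakly compact and hence has the Krein--Milman property; once this is checked, the argument is essentially a chaining of Theorem~4.1 of~\cite{Polyrakis1988}, Theorem~\ref{thm:reflexive not contain l1+}, and Remark~\ref{rem:(i)-(ii) basic properties of reflexive cones}. A minor point to be careful about is the precise meaning of ``infinite-dimensional cone'', which must be read as ``$K-K$ infinite-dimensional'' in order to conclude that $\Gamma$ is infinite.
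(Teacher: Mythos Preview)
Your argument is correct and follows essentially the same route as the paper's proof: both verify that the relevant base has the Krein--Milman property (via weak compactness and the Krein--Milman theorem), invoke Theorem~4.1 of \cite{Polyrakis1988} to conclude $K\cong\ell_1^+(\Gamma)$, and then obtain a copy of $\ell_1^+$ inside $K$, contradicting Theorem~\ref{thm:reflexive not contain l1+}. The only difference is presentational: the paper verifies the Krein--Milman property directly for $B$ (and leaves implicit that $B\cap K$ inherits it), whereas you explicitly construct $B_K=B\cap K$ and check its weak compactness; your version is the more carefully written of the two.
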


\begin{proof} First we remark that the base $B$ for $P$ has the
Krein-Milman property, i.e. each closed, convex and bounded subset
$C$ of $B$ is the closed convex hull of his extreme points.
Indeed, for any $C\subseteq B$, closed and convex, by the
Krein-Milman Theorem, we have
$C=\overline{co}^{w}ep(C)=\overline{co}ep(C)$, where ${\rm
ext}(C)$ denotes the set of the extreme points of $C$ and $\overline{co}^{w}ep(C)$, $\overline{co}ep(C)$, is the closed convex hull of $ep(C)$ in the weak, norm, topology of $X$.  If we
suppose that $K\subseteq P$ is an infinite dimensional, closed cone with the continuous
projection property, then by \cite{Polyrakis1988}, Theorem 4.1,
$K$ is isomorphic to the positive cone $\ell_{1}^{+}(\Gamma)$ of
some $\ell_{1}(\Gamma)$ space, therefore $\ell_{1}^{+}$ is
contained in $K$ because $\ell_{1}^{+}$ is contained in
$\ell_{1}^{+}(\Gamma)$. This is a contradiction because $K$ as a
reflexive cone cannot contain $\ell_{1}^{+}$, therefore $P$ does
not contain an infinite dimensional, closed cone $K$ with the
continuous projection property and the theorem is true.
\end{proof}

If in the above theorem we suppose that $K\subseteq P$ is a closed
cone and the space $Y=P-P$ is ordered by the cone $K$, we have:
If $Y$ is a locally solid vector lattice, then by the above remarks
we have that $P$ has the continuous projection property and by the
theorem we have a contradiction. So we have the following corollary:

\begin{cor}\label{ch1} If $P$ is a reflexive cone of a Banach space $X$ with
a closed, bounded base $B$, then $P$ does not contain an infinite
dimensional closed cone $K$ so that the space $Y=P-P$, ordered by
the cone $K$, is a locally solid vector lattice. \end{cor}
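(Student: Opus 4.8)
The plan is to deduce this from Theorem~\ref{latcone}. Under its hypotheses $P$ is a reflexive cone with a closed, bounded base, so Theorem~\ref{latcone} tells us that $P$ cannot contain an infinite dimensional, closed cone with the continuous projection property. Hence it suffices to show that if a closed cone $K\subseteq P$ makes $Y=P-P$, ordered by $K$, into a locally solid vector lattice, then $K$ automatically has the continuous projection property; the corollary then follows at once by contradiction.

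To carry this out I would first record that, since $Y$ ordered by $K$ is a vector lattice, every $y\in Y$ decomposes as $y=y^{+}-y^{-}$ with $y^{+}=y\vee 0\in K$ and $y^{-}=(-y)\vee 0\in K$, so that $Z:=K-K$ coincides with $Y$; in particular $Z$, ordered by $K$, is a locally solid vector lattice and $K$ is a closed cone. I would then invoke the results of~\cite{Polyrakis1988} quoted just before Theorem~\ref{latcone}, applied with the cone $K$ in the role of $P$: part~(ii) gives that $Z$---and, by part~(i), the ambient space---has the continuous projection property, which in the terminology adopted here means exactly that the cone $K$ has the continuous projection property. Combining this with Theorem~\ref{latcone} yields the desired contradiction and proves the corollary.

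I do not anticipate a genuine obstacle once Theorem~\ref{latcone} is in hand; the argument is essentially a chain of citations. The only step deserving a line of verification is the identity $Y=K-K$, which is needed so that the ``$Z=K-K$'' setting of the cited results of~\cite{Polyrakis1988} is met; this is immediate from the lattice decomposition $y=y^{+}-y^{-}$. Everything else---closedness and infinite-dimensionality of $K$, and the reflexivity and bounded base of $P$---is either assumed in the statement or inherited directly from the hypotheses.
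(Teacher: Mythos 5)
Your argument is correct and follows the paper's own route: the paper likewise deduces the corollary from Theorem~\ref{latcone} by invoking the quoted results of \cite{Polyrakis1988} to conclude that the cone $K$ has the continuous projection property. Your extra verification that $Y=K-K$ (via $y=y^{+}-y^{-}$), so that the cited setting applies, is a detail the paper leaves implicit but is exactly the right justification.
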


If in the above theorem we suppose that $X$ is a vector lattice,
we have again a contradiction because the cone $P$ has the continuous
projection property. So we have the corollary:

\begin{cor}\label{ch2} Any reflexive and generating cone of an infinite dimensional
Banach space $X$ with a bounded base cannot be a lattice cone. \end{cor}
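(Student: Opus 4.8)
The plan is to apply Theorem~\ref{latcone} with the auxiliary subcone $K$ taken to be $P$ itself, exploiting the fact that a lattice order forces the whole cone to have the continuous projection property.

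First I would assemble the structural information about $P$. Since $P$ is reflexive it is closed (Remark~\ref{rem:(i)-(ii) basic properties of reflexive cones}), and since it is moreover generating the same remark shows that $X$ is reflexive; in particular $P$ is pointed (it is the positive cone of a lattice order, which is antisymmetric, in line with the standing convention of this section) and, being generating in the infinite dimensional space $X$, it is infinite dimensional. It then remains to upgrade the given bounded base to a closed one, so that Theorem~\ref{latcone} applies verbatim. Write $B=\{x\in P:f(x)=1\}$ for the bounded base, $f$ being the strictly positive linear functional defining it. Because $P$ is closed and generating it gives an open decomposition of $X$ (\cite{Jameson}, Theorem 3.5.2), so by the automatic continuity of positive maps already used in the proof of Theorem~\ref{l2} (\cite{Aliprantis-Tourky}, Theorem 2.32) the functional $f$ is continuous. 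Hence $B=B_{f}$ is the intersection of the closed set $P$ with a closed hyperplane, so it is closed as well as bounded, and the hypotheses of Theorem~\ref{latcone} are met.

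Next I would argue by contradiction. Suppose that $X$, ordered by $P$, is a vector lattice. Then, as every Riesz space does, $X$ has the Riesz decomposition property. Since $X$ is a Banach space and $P$ is pointed, closed and generating, the result of \cite{Polyrakis1988} recalled just before Theorem~\ref{latcone} guarantees that $P$ has the continuous projection property. But then $K:=P$ is an infinite dimensional, closed cone contained in $P$ which has the continuous projection property, contradicting Theorem~\ref{latcone}. Therefore $P$ cannot be a lattice cone.

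Essentially all of the mathematical substance is packed into Theorem~\ref{latcone} — which itself rests on Theorem 4.1 of \cite{Polyrakis1988} and on the impossibility of embedding $\ell_{1}^{+}$ in a reflexive cone (Theorem~\ref{thm:reflexive not contain l1+}) — so the only point that demands a little care is the passage from a bounded base to a closed bounded base, which the automatic-continuity argument above settles. I note that when $X$ happens to be a Banach lattice one could instead invoke Corollary~\ref{ch1}, since then $Y=P-P=X$ is a locally solid vector lattice; the Riesz-decomposition route, however, handles an arbitrary lattice cone and is the one I would write up.
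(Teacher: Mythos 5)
Your proof is correct and follows essentially the same route as the paper: assuming $X$ is a lattice, you deduce the Riesz decomposition property, hence (via the quoted result of \cite{Polyrakis1988} for pointed, closed, generating cones) the continuous projection property of $P$, and then contradict Theorem~\ref{latcone} with $K=P$. Your extra step upgrading the given bounded base to a closed bounded base, by the automatic continuity of the defining functional on a closed generating cone, is a sensible piece of care that the paper leaves implicit.
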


\begin{rem}\label{Choquet} Note  that in the Choquet
theory, pointed cones $P$ in locally convex Hausdorff spaces $X$ with a
compact base are studied, see in the book of Alfsen, \cite{Alfsen}.
If a cone has a compact base, then it has a compact neighborhood of
zero. One of the main purposes of the Choquet theory is to give necessary
and sufficient conditions, so that $X$ ordered by the cone $P$ to
be a lattice. To this end a theory of representation of the vectors of
the base by measures supported \textquotedbl{}close\textquotedbl{}
to the extreme points of the base is developed. A necessary condition,
in order $X$ to be a lattice, is of course the cone $P$ to be generating.
If $X$ is a Banach space, then any cone $P$ with a compact base
is strongly reflexive and by the fact that $P$ is generating we have
that $X$ is finite dimensional, therefore the basic problem of the
Choquet theory as it is formulated in locally convex Hausdorff spaces,
is  pointless in Banach spaces because it concerns only finite dimensional spaces.

Moreover,
if we suppose that the cone $P$ has  a weakly compact base, then
$P$ is a reflexive cone with a bounded base. By Theorem
\ref{latcone} and its corollaries we have:
 If the cone $P$ is generating, then   $X$  cannot be a lattice, therefore in the case of a Banach space
ordered by a generating, closed cone with a weakly compact base the
answer to the basic problem of the Choquet theory is negative. A
negative answer is given  also by  Corollary ~\ref{ch1}, in the case
where the space $Y=P-P$ generated by $P$ is a locally solid vector
lattice.
 \end{rem}

\end{document}